\newcommand{\betrow}[6]{\mathtt{#1} & \mathtt{#2} & \mathtt{#3} & \mathtt{#4}& \mathtt{#5} &\mathtt{#6}}
\newcommand{\HH}[2]{\operatorname{H}_{#1}(#2)}
\newcommand{\h}[1]{\-\mbox{-#1}}
\newcommand{\im}{\operatorname {Im}}
\newcommand{\Ann}{\operatorname{Ann}}
\newcommand{\Ker}{\operatorname{Ker}}
\newcommand{\Coker}{\operatorname{Coker}}
\newcommand{\clGH}{\operatorname{GH}}
\newcommand{\D}{\operatorname{D}}
\newcommand{\HF}[1]{\operatorname{H}_{#1}}
\newcommand{\Po}[2]{\operatorname{P}^{#1}_{#2}}
\newcommand{\cone}{\operatorname{Cone}}
\newcommand{\rank}{\operatorname{rank}}
\newcommand{\Hom}[3]{\operatorname{Hom}_{#1}(#2,#3)}
\newcommand{\Tor}[4]{\operatorname{Tor}_{#1}^{#2}(#3,#4){}}
\newcommand{\Ext}[4]{\operatorname{Ext}_{#1}^{#2}(#3,#4){}}
\newcommand{\cha}{\operatorname{char}}
\newcommand{\grade}[2]{\operatorname{grade}_{#1}{#2}}
\newcommand{\xra}{\xrightarrow}
\newcommand{\ee}{\mathsf{e}}
\newcommand{\eg}{\mathsf{g}}
\newcommand{\ff}{\mathsf{f}}
\newcommand{\hh}{\mathsf{h}}
\newcommand{\kk}{\mathsf{k}}
\newcommand{\zz}{\mathbb{Z}}
\newcommand{\A}{\mathsf{A}}
\newcommand{\fD}{\mathfrak{D}}
\newcommand{\mff}{\mathfrak{f}}
\newcommand{\bC}{{\boldsymbol C}}
\newcommand{\bF}{{\boldsymbol F}}
\newcommand{\bG}{{\boldsymbol G}}
\newcommand{\BP}{\mathbb{P}}
\newcommand{\BZ}{\mathbb{Z}}
\newtheorem{theorem}{Theorem}[section]
\newtheorem{corollary}[theorem]{Corollary}
\newtheorem{lemma}[theorem]{Lemma}
\newtheorem{proposition}[theorem]{Proposition}
\newtheorem*{proposition*}{Proposition}
\newtheorem*{maintheorem}{Main Theorem}
\theoremstyle{definition}
\newtheorem{remark}[theorem]{Remark}
\newtheorem{definition}[theorem]{Definition}
\newtheorem{example}[theorem]{Example}
\newtheorem{setup}[theorem]{Setup}
\numberwithin{equation}{theorem}
\theoremstyle{remark}
\title[Artinian Gorenstein algebras of emb.\ dim.\ 4 and socle deg.\ 3]{Artinian Gorenstein algebras of  embedding dimension four and socle degree three}
\author[P.~M.~Marques]{Pedro Macias Marques}
\address{Departamento de Matem\'{a}tica, Escola de Ci\^{e}ncias e Tecnologia, Centro de Investiga\c{c}\~{a}o em Matem\'{a}tica e Aplica\c{c}\~{o}es, Instituto de Investiga\c{c}\~{a}o e Forma\c{c}\~{a}o Avan\c{c}ada, Universidade de \'{E}vora, Rua Rom\~{a}o Ramalho, 59, P--7000--671 \'{E}vora, Portugal}
\email{pmm@uevora.pt}
\author[O.~Veliche]{Oana Veliche}
\address{Department of Mathematics, Northeastern University, Boston, MA 02115, U.S.A.}
\email{o.veliche@northeastern.edu}
\author{Jerzy Weyman}
\address{Department of Mathematics, Jagiellonian University, Krakow, Poland}
\email{jerzy.weyman@gmail.com}
\thanks{}
\date{\today}
\keywords{Artinian Gorenstein algebra,  Macaulay inverse system, doubling, free resolution, connected sum}
\subjclass[2020]{Primary 13C05. Secondary 13H10, 13A02, 13D02, 13D07}
\begin{document}
\maketitle
\begin{abstract}
We prove that in the polynomial ring $Q=\kk[x,y,z,w]$, with $\kk$ an algebraically closed field of characteristic zero, all Gorenstein homogeneous ideals $I$ such that $(x,y,z,w)^4\subseteq I \subseteq (x,y,z,w)^2$ can be obtained by  \emph{doubling} from a grade three perfect ideal $J\subset I$ such that $Q/J$ is a locally Gorenstein ring. Moreover, a graded minimal free resolution of the \mbox{$Q$-module} $Q/I$  can be completely described in terms of a graded minimal free resolution of the \mbox{$Q$-module} $Q/J$ and a homogeneous embedding of a shift  of the canonical module $\omega_{Q/J}$  into $Q/J$. 
\end{abstract}
\tableofcontents

\section{Introduction}

This paper is devoted to Artinian Gorenstein graded algebras of embedding dimension four and socle degree three. More precisely, we consider rings of the form  $Q/I$, where $Q=\kk[x,y,z,w]$ with $\kk$ an algebraically closed field of characteristic zero, and $I$ is a grade four homogeneous Gorenstein ideal such that the graded quotient ring $Q/I$ has the Hilbert function $\HF{Q/I}(t)=1+4t+4t^2+t^3$. 

The main goal of this paper is to understand the structure of  all such Gorenstein ideals $I$, together with the structure of a graded minimal free resolution $\bF$ of each  $Q$-module $Q/I$. Our main result is that for each ideal $I$ there exists a grade three perfect homogeneous ideal $J\subset I$ such that the ideal $I$ is obtained by a \emph{doubling} of  $J$. In each case, the resolution ${\bF}$ can be obtained by a \emph{doubling} of a graded minimal free resolution ${\bG}$ of the $Q$-module $Q/J$. The \emph{resolution format} of the ideal $J$ is defined as $\mff_J=(1,m,m+t-1,t)$, where $m=\rank_QG_1$ is the number of generators of $J$ and $t=\rank_Q G_3$ is the type of the quotient Cohen-Macaulay ring $Q/J$. Since $Q/I$ is an Artinian Gorenstein ring, the \emph{resolution format} of the ideal $I$ is  given by $\mff_I=(1,n,2n-2,n,1)$ where $n=\rank_QF_1=\rank_QF_2$ and $2n-2=\rank_QF_3$.
Fusing Corollary \ref{cor: Iquadratic}(a) and Theorems \ref{mu7 structure} and \ref{mu9 structure} we obtain the following:

\vspace{0.5cm}

\begin{maintheorem}
Let $\kk$ be an algebraically closed field with $\cha \kk =0$ and $I$ a homogeneous Gorenstein ideal in the polynomial ring $Q =\kk[x,y,z,w]$ with  $(x,y,z,w)^4\subseteq I \subseteq (x,y,z,w)^2$. Then, there exist a grade three perfect quadratic ideal $J$ such that $J\subset I$ and a homogeneous embedding
$\iota$ of a shift of the canonical module of $Q/J$ into $Q/J$ such that the following sequence is exact:
\[0\rightarrow \omega_{Q/J}(-3)\xra{\iota} Q/J\xra{\pi} Q/I\rightarrow 0,\]
where $\pi$ is the canonical projection.
Moreover, the minimal number of generators of the ideal $I$ is either six, seven, or nine, and the ideals $J$ and $I$ have the following resolution formats, respectively:
\[
\begin{array}{cc}
\mff_J&\mff_I\\
\hline
(1,5,5,1)&(1, 6,10,6,1)\\
(1,5,6,2)&(1, 7,12,7,1)\\
(1,6,8,3)&(1, 9,16,9,1).
\end{array}
\]
\end{maintheorem}

The method of proof is based first on a precise description, up to a linear change of variables,  of a dual generator that corresponds to the Gorenstein ideal $I$, through the Macaulay inverse system.
Next, we divide into three cases depending on the minimal number of generators of the ideal $I$ that could be six, seven, or nine. Finally, we treat separately the cases when $Q/I$ is a connected sum of two Artinian Gorenstein algebras of smaller embedding dimension.

Combining  Corollaries \ref{cor: Iquadratic}(b), \ref{cor: mu7 structure}, and \ref{cor: mu9 structure}, we get the following:

\begin{proposition*}
Let $\kk$ be an algebraically closed field with $\cha \kk =0$ and $I$ a homogeneous Gorenstein ideal in the polynomial ring $Q =\kk[x,y,z,w]$ with  $(x,y,z,w)^4\subseteq I \subseteq (x,y,z,w)^2$. Then, the Betti table of $Q/I$ is one of the following form:
{\scriptsize
      \begin{align*}
        \begin{array}{r|ccccc}
          \betrow{}{0}{1}{2}{3}{4} \\
          \hline
          \betrow{0}{1}{.}{.}{.}{.} \\
          \betrow{1}{.}{6}{5}{.}{.} \\        
          \betrow{2}{.}{.}{5}{6}{.} \\
          \betrow{3}{.}{.}{.}{.}{1} \\
          \hline
          \betrow{total}{1}{6}{10}{6}{1}
        \end{array}
      &&
        \begin{array}{r|ccccc}
          \betrow{}{0}{1}{2}{3}{4} \\
          \hline
          \betrow{0}{1}{.}{.}{.}{.} \\
          \betrow{1}{.}{6}{6}{1}{.} \\        
          \betrow{2}{.}{1}{6}{6}{.} \\
          \betrow{3}{.}{.}{.}{.}{1} \\
          \hline
          \betrow{total}{1}{7}{12}{7}{1}
        \end{array}
      &&
        \begin{array}{r|ccccc}
          \betrow{}{0}{1}{2}{3}{4} \\
          \hline
          \betrow{0}{1}{.}{.}{.}{.} \\
          \betrow{1}{.}{6}{8}{3}{.} \\        
          \betrow{2}{.}{3}{8}{6}{.} \\
          \betrow{3}{.}{.}{.}{.}{1} \\
          \hline
          \betrow{total}{1}{9}{16}{9}{1\rlap{ .}}
        \end{array}
      \end{align*}
    }
\end{proposition*}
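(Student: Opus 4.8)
The plan is to read each Betti table off the doubling exact sequence of the Main Theorem, treating the three resolution formats separately; this is precisely what is encoded in the three cited corollaries, which correspond to the three cases $\mu(I)=6$, $7$, $9$.

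Fix $I$ and let $J\subset I$ together with $\iota\colon\omega_{Q/J}(-3)\to Q/J$ be as in the Main Theorem, so that $Q/I=\Coker\iota$. Let $\bG\colon 0\to G_3\to G_2\to G_1\to G_0=Q\to 0$ be the graded minimal free resolution of $Q/J$. Since $Q/J$ is perfect of codimension three, $\Hom{Q}{\bG}{Q}$ is the graded minimal free resolution of $\Ext{Q}{3}{Q/J}{Q}\cong\omega_{Q/J}(4)$, so twisting by $-7$ yields the graded minimal free resolution $\bB\colon 0\to B_3\to B_2\to B_1\to B_0\to 0$ of $\omega_{Q/J}(-3)$, with $B_i=\Hom{Q}{G_{3-i}}{Q}(-7)$; in particular $B_3=Q(-7)$. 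Lifting $\iota$ to a morphism of complexes $\widetilde{\iota}\colon\bB\to\bG$ and forming the mapping cone gives a graded free resolution $\bF=\cone(\widetilde{\iota})$ of $Q/I$ with $F_0=G_0$, $F_i=G_i\oplus B_{i-1}$ for $1\le i\le 3$, and $F_4=B_3=Q(-7)$.

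Next I would substitute the graded minimal free resolution $\bG$ of $Q/J$ in each format. This is supplied by the structure results of the preceding sections (equivalently by the cited Corollaries \ref{cor: Iquadratic}(b), \ref{cor: mu7 structure}, \ref{cor: mu9 structure}), and it is consistent with the resolution format, the quadratic generation of $J$, and the one-dimensionality of the Cohen--Macaulay ring $Q/J$; explicitly,
\begin{align*}
\mff_J=(1,5,5,1):&\quad 0\to Q(-5)\to Q(-3)^{5}\to Q(-2)^{5}\to Q\to 0,\\
\mff_J=(1,5,6,2):&\quad 0\to Q(-4)\oplus Q(-5)\to Q(-3)^{5}\oplus Q(-4)\to Q(-2)^{5}\to Q\to 0,\\
\mff_J=(1,6,8,3):&\quad 0\to Q(-4)^{3}\to Q(-3)^{8}\to Q(-2)^{6}\to Q\to 0.
\end{align*}
The graded twists of $\bB$ follow from $B_i=\Hom{Q}{G_{3-i}}{Q}(-7)$. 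Finally I would assemble $\bF=\cone(\widetilde{\iota})$ and verify it is minimal. For each $i$ with $1\le i\le 4$, comparing the twist lists above shows that every free generator of $B_{i-1}$ lies in strictly higher internal degree than every free generator of $G_i$; hence every entry of $\widetilde{\iota}$ lies in $(x,y,z,w)$, and since $\bG$ and $\bB$ are minimal, so is $\bF$. Reading the twists of $F_i=G_i\oplus B_{i-1}$ off the lists then produces the three Betti tables in the statement---for example, in the first format $F_1=Q(-2)^{5}\oplus Q(-2)=Q(-2)^{6}$, $F_2=Q(-3)^{5}\oplus Q(-4)^{5}$, $F_3=Q(-5)\oplus Q(-5)^{5}=Q(-5)^{6}$, $F_4=Q(-7)$.

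The real difficulties lie upstream, not here: the existence of $J$ and of the embedding $\iota$ (the Main Theorem) and, in the two non-Gorenstein formats, the identification of the linear strand of $\bG$. Given those inputs, the only way this last step can fail is through a miscount of graded shifts or a collapse of the mapping cone, and the degree comparison above is exactly what excludes a collapse.
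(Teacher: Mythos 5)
Your approach is exactly the paper's: read off each Betti table by assembling the mapping cone of the lift of $\iota$, using the graded minimal free resolutions of $Q/J$ supplied by Corollary \ref{cor: Iquadratic}(b), Proposition \ref{J res mu7}/\ref{J res mu7-c}, and Proposition \ref{J res mu9}/\ref{J res mu9-c}. The resolutions you list for $\bG$, the formula $B_i=\Hom{Q}{G_{3-i}}{Q}(-7)$, and the resulting twists in $F_i=G_i\oplus B_{i-1}$ are all correct and match the paper.

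One small slip in your minimality argument: you compare the twists of $B_{i-1}$ with those of $G_i$, but the off-diagonal block of $\partial^{\bF}_i$ is $\widetilde{\iota}_{i-1}\colon B_{i-1}\to G_{i-1}$, so the comparison should be $B_{i-1}$ versus $G_{i-1}$. As written the claim fails: in the first two formats $B_0$ has a generator in degree $2$ while $G_1$ also has generators in degree $2$. The correct comparison does hold in all three formats (e.g.\ $B_0$ in degrees $\ge 2$ versus $G_0=Q$ in degree $0$; $B_1$ in degrees $\ge 3$ versus $G_1$ in degree $2$; $B_2$ in degree $5$ versus $G_2$ in degrees $3,4$; $B_3$ in degree $7$ versus $G_3$ in degrees $\le 5$), so the conclusion stands once the index is fixed.
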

This result was independently obtained by Abdallah and Schenk in a paper where they study the weak and the strong Lefschetz properties of Artinian Gorenstein algebras 
\cite[Theorem 1.4]{AS22}.

The paper is organized as follows. In Section \ref{doubling} we define the \emph{doubling} construction for both the  ideal $I$ and for  a graded minimal free resolution $\bF$ of the $Q$-module $Q/I$. In Section \ref{AGsocdeg3} we divide our algebras into two basic classes of quadratic and non\h{quadratic}.
We use a result of Conca, Rossi, and Valla \cite{CRV01} to characterize short non\h{quadratic} algebras as algebras with at least one linear form of rank one. Our restrictions on the field  $\kk$ are mostly introduced due to the fact that a proof of this result is known by the authors only over algebraically closed fields of characteristic zero.  
In Section \ref{Structure6gens} we deal with quadratic algebras. We prove that if $I$ is quadratic, then $Q/I$ is a hyperplane section of a one dimensional Gorestein quotient ring $Q/J$. The corresponding resolution $\bF$ is a tensor product of the Koszul complex in one regular element and a Buchsbaum-Eisenbud resolution $\bG$ of $Q/J$  \cite[Theorem 2.1]{BE77a}. This result was originally proved by El Khoury and Kustin in \cite{EKK17}, over a field of characteristic different from two. 
Our proof is shorter, but it applies only on an algebraically closed field of characteristic zero. It is based on some results of Avramov \cite{Avr89} and \cite{CRV01}; see Theorem \ref{Iquadratic}. In Section \ref{NonquadGorideals} we divide the non\h{quadratic} algebras based on the form of the dual generator into families that can be dealt with. 
Two main cases, of seven and nine generators, are divided into several subcases. 
In Section \ref{Structure7gens} we deal with seven generator cases. We detect a quadratic grade three perfect ideal $J$ whose resolution format is $\mff_J=(1,5,6,2)$, and it is of the type described in Brown \cite{Bro87}, depending on a $5\times 5$ skew\h{symmetric}  matrix.
In Section \ref{Structure9gens} we deal with nine generator cases. The  grade three perfect ideal $J$, whose resolution format is $\mff_J=(1,6,8,3)$, is just the ideal generated by the  quadratic minimal generators of the ideal $I$.
It turns out that a graded minimal free resolution $\bG$ of $Q/J$ is an Eagon\h{Northcott} complex resolving $2\times 2$ minors of a $2\times 4$ matrix; see  \cite{EN62}.

\section{Doubling}
\label{doubling}

In this section, let $S$ be a Gorenstein ring  and $J$ a grade $g$ perfect ideal of $S$. The canonical module of $S/J$, unique up to isomorphism, is $\omega_{S/J}=\Ext{S}{g}{S/J}{S}$.
Its rank is one if and only if $S/J$  is a generically Gorenstein ring; see e.g. \cite[Proposition 3.3.18]{BH93}. Recall that a ring is called \emph{generically Gorestein} if its localization at each minimal prime is a Gorenstein ring.

If $(\bC,\partial^{\bC})$ is a complex, then  $(\bC^*,\partial^{\bC^*})$ denotes  the complex where $(\bC^*)_i=\Hom{S}{C_{-i}}{S}$ and $\partial_i^{\bC^*}=\Hom{S}{\partial_{-i}^\bC}{S}$, and  $\bigl(\Sigma^{j}\bC,\partial^{\Sigma^{j}\bC}\bigr)$ denotes the complex where $(\Sigma^{j}\bC)_i=C_{i-j}$ and $\partial^{\Sigma^{j}\bC}_i=(-1)^{j}\partial_{i-j}^{\bC}$, for all $i,j\in \BZ$.

\begin{definition}
\label{def: doubling}
Let $I$ be a Gorenstein ideal of $S$ of grade $g+1$.
If there exists a grade $g$ perfect ideal $J$ of $S$ such that $J\subset I$  and an embedding $\iota$
of the canonical module $\omega_{S/J}$ of $S/J$ into $S/J$ such that the following sequence is exact
\[
0\rightarrow \omega_{S/J}\xra{\iota} S/J\xra{\pi} S/I\rightarrow 0,
\]
where $\pi$ is the canonical projection, then the ideal $I$ is called a \emph{doubling of  $J$} or a \emph{doubling of  $J$ via $\iota$}. Remark that $S/J$ is a generically Gorenstein ring.

Let $\bG$ be a  free resolution of the $S$-module $S/J$. Then $\Sigma^{-g}\bG^*$ is a free resolution of the canonical module $\omega_{S/J}$. The  embedding $\iota$ extends to a chain homomorphism of complexes $\overline\iota\colon \Sigma^{-g}\bG^*\to \bG$.   The complex $\bF\colon=\cone\overline\iota$ is called a \emph{doubling of $\bG$} or a \emph{doubling of $\bG$ via $\overline\iota$}. 
\end{definition}

\begin{example} 
\label{exp doubling}
If $J$ is a grade $g$ Gorenstein ideal of $S$ and $f$ is a regular element of $S/J$, then $J+(f)$ is a Gorenstein ideal of $S$ of grade $g+1$ that is obtained by doubling of the ideal $J$ via the multiplication map
$S/J\xra{f\cdot} S/J.$
\end{example}

The next two lemmas may be well known, but we include them here for the convenience of the reader.

\begin{lemma} 
\label{exact F}
Assume the setup from Definition \ref{def: doubling}.  The complex $\bF$ is a free resolution of the $S$-module $S/I$. 
\end{lemma}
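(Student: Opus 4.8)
The plan is to verify that $\bF = \cone\overline\iota$ is a free resolution of $S/I$ by combining the long exact homology sequence of a mapping cone with the exactness of the defining sequence in Definition \ref{def: doubling}. First I would recall the standard fact that for a chain map $\overline\iota\colon \Sigma^{-g}\bG^* \to \bG$ of complexes, there is a short exact sequence of complexes $0 \to \bG \to \cone\overline\iota \to \Sigma(\Sigma^{-g}\bG^*) \to 0$, yielding a long exact sequence in homology relating $\HH{i}{\bG}$, $\HH{i}{\cone\overline\iota}$, and $\HH{i-1}{\Sigma^{-g}\bG^*}$, in which the connecting map is (up to sign) the map induced by $\overline\iota$ on homology. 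Since $\bG$ resolves $S/J$ and, because $J$ is a grade $g$ perfect ideal in the Gorenstein ring $S$, the dual complex $\Sigma^{-g}\bG^*$ resolves $\omega_{S/J} = \Ext{S}{g}{S/J}{S}$, the only nonvanishing homology of either complex sits in degree $0$.

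Next I would read off the long exact sequence in the relevant degrees. For $i \geq 2$ we get $\HH{i}{\bF} = 0$ immediately since both neighboring terms vanish. In low degrees the sequence reduces to
\[
0 \to \HH{1}{\bF} \to \HH{0}{\Sigma^{-g}\bG^*} \xra{\ \overline\iota_*\ } \HH{0}{\bG} \to \HH{0}{\bF} \to 0,
\]
that is,
\[
0 \to \HH{1}{\bF} \to \omega_{S/J} \xra{\ \iota\ } S/J \to \HH{0}{\bF} \to 0.
\]
Here I must check that the induced map $\omega_{S/J} \to S/J$ is exactly the embedding $\iota$: this holds because $\overline\iota$ was chosen precisely as a lift of $\iota$ to the free resolutions, so it induces $\iota$ on $\HH{0}{-}$. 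Since the sequence $0 \to \omega_{S/J} \xra{\iota} S/J \xra{\pi} S/I \to 0$ is exact by hypothesis, $\iota$ is injective with cokernel $S/I$. Exactness of the four-term sequence then forces $\HH{1}{\bF} = 0$ and $\HH{0}{\bF} \cong S/I$. Combined with the vanishing in higher degrees, this shows $\bF$ is a resolution of $S/I$; freeness of each $F_i$ is automatic since $\bF$ is built from the free modules of $\bG$ and $\bG^*$.

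I would also note the bookkeeping point that $\bF$ is a genuine (left) complex concentrated in nonnegative homological degrees: $(\cone\overline\iota)_i = G_i \oplus (\Sigma^{-g}\bG^*)_{i-1}$, and since $\Sigma^{-g}\bG^*$ is concentrated in degrees $0$ through $g$ (it resolves a module), $\bF$ is concentrated in degrees $0$ through $g+1$, consistent with $\grade I = g+1$. The main obstacle is essentially conceptual rather than computational: one must be careful that the connecting homomorphism in the mapping cone sequence is identified correctly with $\overline\iota_*$ (sign conventions aside) and that $\overline\iota_*$ on zeroth homology really is $\iota$ and not some other map; once that identification is pinned down, the rest is a direct application of the long exact sequence. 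Everything else — the fact that $\Sigma^{-g}\bG^*$ resolves $\omega_{S/J}$, the existence of the lift $\overline\iota$ — is already recorded in Definition \ref{def: doubling} and the standard theory of perfect ideals over Gorenstein rings.
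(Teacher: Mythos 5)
Your proposal is correct and follows essentially the same route as the paper's proof: use the short exact sequence of complexes coming from the mapping cone, read off the long exact sequence in homology using that $\bG$ and $\Sigma^{-g}\bG^*$ have homology concentrated in degree zero, identify the connecting map with $\iota$, and conclude from injectivity of $\iota$ and $\Coker\iota\cong S/I$. The extra remarks you make (identifying the connecting map with $\overline\iota_*$, verifying $\overline\iota_*=\iota$ on $\HH{0}{-}$, and the degree bookkeeping) are implicit in the paper and are worthwhile to spell out.
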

\begin{proof}
There exists a short exact sequence of complexes induced by the homomorphism of complexes $\overline\iota\colon\Sigma^{-g}\bG^*\to\bG$:
\[
0\to\bG\to\bF\to\Sigma^{1-g}\bG^*\to 0,
\]
which induces a long exact sequence in homology:
\[
\cdots\to \HH{i}{\Sigma^{-g}\bG^*}\xra{\HH{i}{\overline\iota}} \HH{i}{\bG}\to\HH{i}{\bF}\to\HH{i-1}{\Sigma^{-g}\bG^*}\xra{\HH{i-1}{\overline\iota}}\HH{i-1}{\bG}\to\cdots
\]
Recall  that 
$\quad \HH{i}{\bG}=
\begin{cases}
S/J&\text{if}\ \, i=0\\
0&\text{if}\ \, i\not=0
\end{cases}\quad$  and $\quad\HH{i}{\Sigma^{-g}\bG^*}=
\begin{cases}
\omega_{S/J}&\text{if}\ \, i=0\\
0&\text{if}\ \, i\not=0.
\end{cases}$

In the case $i\geq 2$, we immediately get $\HH{i}{\bF}=0.$
By setting $i=1$ in the long exact sequence, we get the following exact sequence:
\[
0\to\HH{1}{\bF}\to\omega_{S/J}\xra{\iota} S/J\to\HH{0}{\bF}\to 0.
\]
Since $\iota$ is injective, we conclude that $\HH{1}{\bF}=0$.
Finally, using that the cokernel of $\iota$ is isomorphic to $S/I$ we get that $\bF$ is a free resolution of the $S$-module $S/I$.
\end{proof}

\begin{remark}  
\label{rmk: doubling res}
Consider the ideals $I$, $J$ of $S$ and  the element $f$ of $S$ as in  Example \ref{exp doubling}. The free resolution $\bF$ of the $S$-module $S/I$ obtained by doubling of a free resolution $\bG$ of the $S$-module $S/J$ and the multiplication map by $f$, already appeared in the work of Kustin and Miller see e.g.\ \cite[Proof of Theorem 3.2]{KM82b}. \emph{Doubling} was a folklore term until it appeared for the first time in print in Laxmi's paper \cite{Lax20}.
\end{remark}

\begin{lemma}
\label{J entries}
Let $J$ be a grade $g$ perfect ideal of a Gorenstein  ring $S$ and $(\bG,\partial^{\bG})$ an $S$\h{free} resolution of the $S$-module $S/J$. Then, using the standartd basis, every element  of  the free module $G_g$ that has all entries in $J$ is in the image of $(\partial_g^{\bG})^*$. 
\end{lemma}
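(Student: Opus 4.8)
The plan is to dualize the resolution $\bG$ and reinterpret the statement in terms of the cohomology of $\bG^*$. Since $J$ is a grade $g$ perfect ideal, $S/J$ has projective dimension $g$, so $\bG$ is a finite free resolution $0\to G_g\xra{\partial_g} G_{g-1}\to\cdots\to G_0\to S/J\to 0$. Applying $\Hom{S}{-}{S}$, the complex $\bG^*$ has homology concentrated in degree $g$, where $\HH{g}{\bG^*}=\Ext{S}{g}{S/J}{S}=\omega_{S/J}$, and is exact elsewhere; in particular $\bG^*$ is exact at the spot $G_{g-1}^*\xra{(\partial_g)^*} G_g^*\to 0$ in the sense that $(\partial_g)^*$ is surjective onto... no: the relevant fact is that $\im (\partial_g)^* = \Ker\bigl(G_g^*\to G_{g+1}^*\bigr) = G_g^*$ when $g$ is the last spot, so actually $(\partial_g)^*$ is surjective. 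Wait — that is too strong, and would make the lemma trivial. The point must be more subtle: the claim is about elements of $G_g$ (not $G_g^*$) whose coordinates lie in $J$, and the conclusion is that such an element lies in $\im (\partial_g)^*$, so here $G_g$ is being identified with $G_g^*$ via the standard basis and $(\partial_g)^*$ is viewed as a map $G_{g-1}^*\cong G_{g-1}\to G_g$. So I will fix dual bases and regard $(\partial_g^{\bG})^*\colon G_{g-1}\to G_g$ as the transpose matrix.

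The key steps I would carry out are as follows. First, translate: an element $v\in G_g$ with all entries in $J$ means $v = \sum_j a_j g_j^{(g)}$ with $a_j\in J$ in the standard basis of $G_g$; saying $v\in\im(\partial_g)^*$ means there is $u\in G_{g-1}$ with $(\partial_g)^*(u)=v$, i.e. the system $M^{\mathsf T} u = v$ is solvable, where $M$ is the matrix of $\partial_g$. Second, observe that $J$ annihilates $\omega_{S/J}=\HH{g}{\bG^*}$, because $\omega_{S/J}$ is a module over $S/J$. Third — the heart of the argument — identify $\Coker (\partial_g)^* = G_g^*/\im(\partial_g)^*$ with $\HH{g}{\bG^*} = \omega_{S/J}$ (this holds precisely because $g = \pd_S S/J$ so there is no $G_{g+1}$, hence $\Ker$ at the last spot is everything and cohomology there is exactly the cokernel). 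Therefore the image of $v$ in $\Coker(\partial_g)^*$ corresponds to an element of $\omega_{S/J}$; since $v$ has all entries in $J$, and $J\cdot\omega_{S/J}=0$, I need to argue that this forces the class of $v$ to vanish. This last point requires care: it is not that $v\in J\cdot G_g^*$ automatically dies, but rather that the natural surjection $G_g^*\twoheadrightarrow\omega_{S/J}$ sends $v$ to an element that lies in $J\cdot\omega_{S/J}$ — indeed the surjection is $S$-linear, $v$ is an $S$-linear combination of basis vectors with coefficients in $J$, so its image lies in $J\omega_{S/J}=0$, hence $v\in\Ker(G_g^*\twoheadrightarrow\omega_{S/J}) = \im(\partial_g)^*$. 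That completes the proof.

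I expect the main obstacle to be setting up the identification $\Coker(\partial_g^{\bG})^*\cong\omega_{S/J}$ cleanly and correctly tracking which free module is being dualized, since the statement silently uses the standard basis to identify $G_{g-1}$ with $G_{g-1}^*$ and $G_g$ with $G_g^*$. One must be careful that "all entries in $J$" is a basis-dependent notion but the conclusion $v\in\im(\partial_g)^*$ is not, and that the chosen basis is the one realizing $\bG$ as a minimal (or at least standard) complex of free modules. Once the bookkeeping is in place, the argument reduces to the two facts that $\omega_{S/J}$ is an $S/J$-module (so $J$ kills it) and that the cokernel of the last dualized differential computes $\omega_{S/J}$ — both of which are immediate from perfection of $J$ and the definition $\omega_{S/J}=\Ext{S}{g}{S/J}{S}$. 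I would present it in three or four lines once the identifications are fixed.
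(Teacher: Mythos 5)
Your proposal is correct and follows essentially the same route as the paper: identify $\Coker(\partial_g^{\bG})^*$ with $\Ext_S^g(S/J,S)=\omega_{S/J}$ (using perfection of $J$ to know $\Sigma^{-g}\bG^*$ resolves $\omega_{S/J}$), note that $J$ annihilates $\omega_{S/J}$ since the latter is an $S/J$-module, and conclude that any vector with entries in $J$ is sent to zero by the surjection $G_g^*\twoheadrightarrow\omega_{S/J}$, hence lies in $\im(\partial_g^{\bG})^*$. The bookkeeping worries you flag (standard basis, $G_g$ versus $G_g^*$) are handled implicitly in the paper exactly as you resolve them, so there is no substantive difference.
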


\begin{proof} Since $J$ is a perfect ideal, the complex $\Sigma^{-g}\bG^*$ is an $S$-free resolution of the canonical module $\omega_{S/J}=\Ext{S}{g}{S/J}{S}$, hence  
the following sequence of $S$-modules is exact:
\begin{equation*}
    G_{g-1}^*\xra{(-1)^g(\partial_g^{\bG})^*}G_g^*\xra{\varepsilon}\Ext{S}{g}{S/J}{S}\to 0.
\end{equation*}
Since $J\in\Ann_S\big(\Ext{S}{g}{S/J}{S})$, we have that every vector in $G_g^*$ with entries in $J$  is in $\Ker\varepsilon$, hence it is in the image of $(-1)^g(\partial_g^{\bG})^*$, which is the image of $(\partial_g^{\bG})^*$.
\end{proof}

\chunk 
\label{homogeneous canonical}
In what follows, we use definitions and results from \cite[Section 3.5]{BH93}. 
Let $\kk$ be a field, $Q=\kk[x_1,\dots,x_n]$ a polynomial ring, and $J$ a grade $g$ perfect ideal of $Q$. The canonical module of the ring $Q$ is $\omega_Q = Q(-n)$ and the canonical module of the quotient ring $Q/J$ is $\omega_{Q/J}=\Ext{Q}{g}{Q/J}{Q(-n)}$.
Let
\[
\bF\colon 0\to F_g\to F_{g-1}\to\dots\to F_0\to Q/J\to 0
\]
be a graded  minimal free  resolution of $Q/J$ over $Q$, with $F_g=\oplus_{i\in \BZ} Q(-a_i)$. Then, 
\[
\min\{i\mid (\omega_{Q/J})_i\not = 0\}=n-\max_{i\in \BZ}a_i.
\]
Therefore, if in addition  $Q/J$ is a Gorenstein ring, the canonical module of $Q/J$ is
\begin{equation}
\label{Gor canonical}
  \omega_{Q/J}=Q/J\bigl(-n+\max_{i\in\BZ} a_i\bigr). 
\end{equation}

\begin{remark} 
Let $Q =\kk[x,y,z,w]$ with $\kk$ an algebraically closed field and of characteristic zero.
Main Theorem says in particular that each Artinian Gorenstein homogeneous quotient ring $Q/I$ of embedding dimension four and of socle degree three, is obtained by a  doubling of  a grade three perfect quadratic ideal in $Q$.
\end{remark}

\section{Artinian Gorenstein Algebras of socle degree 3}
\label{AGsocdeg3}

Let $\kk$ be a field, $Q=\kk[x_1,\dots,x_n]$ a polynomial ring, and  $\fD=\kk_{DP}[X_1,\dots,X_n]$  a divided power ring.
The ring $Q$ acts on $\fD$ by contraction, i.e.
\[
x^\alpha\circ X^{[\beta]}=
\begin{cases}
X^{[\beta-\alpha]} & \text{if } \beta\ge\alpha \\
0 & \text{otherwise,}
\end{cases}
\]
where $x^\alpha=x_1^{\alpha_1}\cdots x_n^{\alpha_n}$, $X^{[\beta]}=X_1^{[\beta_1]}\cdots X_n^{[\beta_n]}$, $\alpha,\beta\in\mathbb{N}^n$, and we mean $\beta\ge\alpha$ component\h{wise}. Multiplication in $\fD$ is defined by
\[
X^{[\alpha]}\cdot X^{[\beta]}=\tfrac{(\alpha+\beta)!}{\alpha!\beta!}X^{[\alpha+\beta]},
\]
where ${\alpha!=\alpha_1!\cdots\alpha_n!}$, so ${\tfrac{(\alpha+\beta)!}{\alpha!\beta!}=\tfrac{(\alpha_1+\beta_1)!\cdots(\alpha_n+\beta_n)!}{\alpha_1!\cdots\alpha_n!\beta_1!\cdots\beta_n!}}$. We have the following relation between divided powers and regular powers: ${X^\alpha=\alpha!X^{[\alpha]}}$. 
For further details, see for instance the book of Iarrobino and  Kanev \cite[Appendix~A]{IK99}.

In  this section,  $I$ denotes a homogeneous Gorenstein ideal in $Q$ such that $(x_1,\dots,x_n)^4\subseteq I \subseteq (x_1,\dots,x_n)^2$. The standard-graded ring $Q/I$ is thus an Artinian Gorenstein $\kk$-algebra of embedding dimension $n$, socle degree three, and Hilbert function  $\HF{Q/I}{(t)}=1+nt+nt^2+t^3$, i.e.
\[
Q/I=\kk\oplus (Q/I)_1\oplus (Q/I)_2\oplus (Q/I)_3\oplus (Q/I)_4
\]
with $\rank_\kk (Q/I)_0=\rank_\kk (Q/I)_4=1$ and $\rank_\kk (Q/I)_2=\rank_\kk (Q/I)_3=n.$
By Macaulay \cite{Mac94}, there exists a cubic form $F$ in $\fD$, called \emph{dual generator} of  $I$, unique up to scaling by a non-zero field element, such that \[I=\Ann_Q F.\] 
A linear change of variables in the polynomial $F$ will result in a linear change of variables in the ideal $I$, and all the properties, such as minimal number of generators, being Gorenstein, etc., are preserved.

If $\ell$ is a linear form of the ring $Q/I$, then   the rank of the linear  map  given by the multiplication by~$\ell$
\[{(Q/I)_1\xra{\ell\cdot} (Q/I)_2},\] 
is called  the \emph{rank of $\ell$}. Since $Q/I$ is Gorenstein, all non-zero linear forms have positive rank. The following two complementary cases will allow us to better understand the ideal $I$:
\begin{itemize}
    \item[-] There exists in $Q/I$ a linear form of rank one.
    \item[-] All non-zero linear forms of $Q/I$ have rank at least two.
\end{itemize}

The next result concerns the first case and it distinguishes two subcases, the second being the connected sum subcase. 

\begin{proposition}
\label{rankonelinearformdualgenerator}
Let $\kk$ be a field, $Q=\kk[x_1,\dots,x_n]$ a polynomial ring, and $I$ a homogeneous Gorenstein ideal in $Q$ such that $(x_1,\dots,x_n)^4\subseteq I \subseteq (x_1,\dots,x_n)^2$.
If $Q/I$ admits a linear form of rank one, then,  up to a linear change of variables, there exists a cubic form $G$ in $\fD$ such that the dual generator $F$  of $I$ is of the form 

\begin{enumerate}[\rm\quad(1)]
\item \label{tangentcase} ${F=X_1X_n^{\,[2]} + G(X_2,\ldots,X_n)}$, in which case, ${x_1^{\,2},x_1x_2,\ldots,x_1x_{n-1}\in I}$;
\item \label{CS13case} ${F=X_1^{\,[3]} + G(X_2,\ldots,X_n)}$, in which case, ${x_1x_2,\ldots,x_1x_n\in I}$.
\end{enumerate}
\end{proposition}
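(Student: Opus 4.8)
The plan is to translate the rank-one hypothesis into explicit quadratic relations lying in $I$ and then to read off the shape of the dual generator $F$ from the contraction action of $Q$ on $\fD$. I would begin by normalizing coordinates: since $I\subseteq(x_1,\dots,x_n)^2$, the degree-one piece $(Q/I)_1$ equals $Q_1$ and has dimension $n$, so extending the given rank-one linear form $\ell$ to a basis of $Q_1$ and applying the corresponding linear change of variables --- which replaces $F$ by a linear-coordinate transform of itself and is harmless, as recalled at the start of this section --- I may assume $\ell=\bar x_1$. That $\bar x_1$ has rank one says exactly that the image of the multiplication map $(Q/I)_1\xra{x_1\cdot}(Q/I)_2$ is one-dimensional; since this image is spanned by the classes $\overline{x_1^{\,2}},\overline{x_1x_2},\dots,\overline{x_1x_n}$, I would split into the cases $\overline{x_1^{\,2}}=0$ and $\overline{x_1^{\,2}}\neq 0$, which I claim yield exactly the forms (1) and (2) of the statement, respectively.

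Suppose first $\overline{x_1^{\,2}}=0$, that is, $x_1^{\,2}\in I$. The one-dimensional image is then spanned by some $\overline{x_1x_j}$ with $j\ge 2$, and a linear change of variables among $x_2,\dots,x_n$ (fixing $x_1$) lets me assume $\overline{x_1x_n}\neq 0$ and $\overline{x_1x_2}=\dots=\overline{x_1x_{n-1}}=0$, so that $x_1^{\,2},x_1x_2,\dots,x_1x_{n-1}\in I$ while $x_1x_n\notin I$. Writing $F=\sum_{|\alpha|=3}c_\alpha X^{[\alpha]}$ and using that $x^\gamma\circ X^{[\beta]}$ equals $X^{[\beta-\gamma]}$ when $\beta\ge\gamma$ and is $0$ otherwise, the relation $x_1^{\,2}\circ F=0$ kills every $c_\alpha$ with $\alpha_1\ge 2$, and $x_1x_j\circ F=0$ for $2\le j\le n-1$ kills every $c_\alpha$ with $\alpha_1\ge 1$ and $\alpha_j\ge 1$; hence the only monomial with $\alpha_1\ge 1$ that can occur in $F$ is $X_1X_n^{\,[2]}$, so $F=c\,X_1X_n^{\,[2]}+G(X_2,\dots,X_n)$ for some $c\in\kk$ and some cubic $G$. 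Since $x_1x_n\circ F=cX_n$ and $x_1x_n\notin I$, we get $c\neq 0$, and rescaling $F$ yields form (1).

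Now suppose $\overline{x_1^{\,2}}\neq 0$. Then $\overline{x_1^{\,2}}$ spans the one-dimensional image, so for each $j\ge 2$ there is $c_j\in\kk$ with $\overline{x_1x_j}=c_j\overline{x_1^{\,2}}$; the substitution fixing $x_1$ and sending $x_j\mapsto x_j-c_jx_1$ for $j\ge 2$ is an invertible linear change of variables after which $x_1x_2,\dots,x_1x_n\in I$ while $x_1^{\,2}\notin I$. As before, the relations $x_1x_j\circ F=0$ force $c_\alpha=0$ whenever $\alpha_1\ge 1$ and $\alpha_j\ge 1$ for some $j\ge 2$; hence the only monomial with $\alpha_1\ge 1$ occurring in $F$ is $X_1^{\,[3]}$, so $F=c\,X_1^{\,[3]}+G(X_2,\dots,X_n)$ for some $c\in\kk$, and since $x_1^{\,2}\circ F=cX_1$ and $x_1^{\,2}\notin I$ we again have $c\neq 0$; rescaling yields form (2). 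The ``in which case'' clauses are the immediate converse: for $F$ in either displayed form the listed quadrics visibly annihilate $F$ and hence lie in $I=\Ann_Q F$.

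The divided-power bookkeeping above is routine and works over an arbitrary field; the characteristic-zero and algebraic-closedness hypotheses of the paper are not needed here. The steps I would treat with some care, and which I regard as the crux, are the two coordinate normalizations --- aligning the one-dimensional image $x_1\cdot(Q/I)_1$ with a single coordinate direction $\overline{x_1x_n}$ by a substitution fixing $x_1$ when $x_1^{\,2}\in I$, and clearing all the products $x_1x_j$ with $j\ge 2$ by the single substitution $x_j\mapsto x_j-c_jx_1$ when $x_1^{\,2}\notin I$. Once the coordinates are fixed, the shape of $F$ is completely forced, so the dichotomy ``$x_1^{\,2}\in I$ or not'' is precisely the dichotomy between cases (1) and (2).
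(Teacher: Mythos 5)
Your proof is correct and follows essentially the same route as the paper's: both normalize coordinates so that $x_1$ is the rank-one form, split on whether $x_1^2\in I$, and then read off the shape of $F$ from the vanishing of the contractions $x_1^2\circ F$ and $x_1x_j\circ F$. The only cosmetic difference is that you phrase the dichotomy in terms of the one-dimensional image of multiplication by $x_1$ and give the normalizing substitutions explicitly, while the paper works with the kernel $V=(I:x_1)_1$ and argues slightly more abstractly; these are dual descriptions of the same subspace and the resulting coordinate changes coincide.
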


\begin{proof}  
Making a linear change of variables, we may assume that the class of $x_1$ is a linear form of rank one in $Q/I$. Then  $V\colon=(I:x_1)_1$ is  a vector subspace of $Q_1$ of dimension $n-1$.

If $x_1\in V$, then $x_1^{\,2}\in I$. After a linear change of variables we can assume that ${V=\langle x_1,\ldots,x_{n-1}\rangle}$, thus  ${x_1^2, x_1x_2,\dots,x_1x_{n-1}}$ are in $I$. The ideal $I$ is as in case \eqref{tangentcase}. It follows that the dual generator $F$ of $I$ cannot have any monomial divisible by  $X_1^{[2]}$, nor can it have a monomial divisible by  $X_1X_j$, for $2\leq j\leq n-1$. So its only monomial divisible by $X_1$ is $X_1X_n^{[2]}$, up to scaling. This monomial must occur in $F$, otherwise we would have ${x_1\in I}$, contradicting the hypothesis ${I\subseteq(x_1\dots,x_n)^2}$.  By scaling $F$, or even $X_1$, we can make the coefficient of $X_1X_n^{[2]}$ equal to $1$. This makes $F$ as in case \eqref{tangentcase}.

If $x_1\notin V$, then ${x_1^2\not\in I}$. After a change of variables we can assume that ${V=\langle x_2,\ldots,x_n\rangle}$, thus ${x_1x_2,\dots,x_1x_n}$ are in $I$. The ideal $I$ is as in case \eqref{CS13case}. 
It follows that the dual generator $F$ of $I$  does not have any monomial involving $X_1$ and any of the other variables. Since, as before, $F$ must have at least one monomial involving $X_1$, we obtain that $X_1^{[3]}$ must occur in $F$ and by scaling $F$ we can write it as in case \eqref{CS13case}.
\end{proof}

The following result is proved in \cite[Proposition 6.11, Theorem 6.15]{CRV01} for  $n\le4$ and in Caviglia \cite{Cav00} for $n=5$. Recall that a standard graded $\kk$-algebra $R$ is called Koszul if $\kk$ admits a linear resolution as an $R$-module.

\chunk
\label{conca} 
Let $\kk$ be an algebraically closed  field of characteristic zero, $Q=\kk[x_1,\dots,x_n]$ a polynomial ring, and $I$ a homogeneous Gorenstein ideal in $Q$ such that $(x_1,\dots,x_n)^4\subseteq I \subseteq (x_1,\dots,x_n)^2$.
If $n\leq 5$, the following assertions are equivalent:
\begin{enumerate}[(i)]
\item The ideal $I$ is quadratic;
\item The ring $Q/I$ has all non-zero linear forms of rank at least two;
\item The ring $Q/I$ is Kozsul.
\end{enumerate}

When $\kk$ is an algebraically closed field with characteristic zero, Proposition \ref{rankonelinearformdualgenerator}  describes the dual generators for non-quadratic ideals $I$. This characterization will play an important role in describing all non-quadratic ideals $I$, up to a linear change of variables, in the case of embedding dimension four; see Proposition \ref{mu7-9 generators}.

\section{Structure of Gorenstein ideals with 6 generators}
\label{Structure6gens}

In this section,  $\kk$ is a field and $I$ is a quadratic Gorenstein ideal in the polynomial ring $Q=\kk[x,y,z,w]$ such that $(x,y,z,w)^4\subseteq I \subseteq (x,y,z,w)^2$. The standard-graded ring $Q/I$ is thus an Artinian Gorenstein ring of embedding dimension four, socle degree three, and Hilbert function $H_{Q/I}(t)=1+4t+4t^2+t^3$. Moreover, the ideal $I$ is minimally generated by six polynomials. 
In case $\kk$ is an infinite field, by choosing a generic form $F$ of degree three in $\fD=\kk_{DP}[X,Y,Z,W]$ one obtains a quadratic ideal $I=\Ann_Q F$, as the number of generators of an ideal is a semi-continuous function and the set of cubics yielding an ideal of $6$ generators is open dense.

In order to understand the structure of such an ideal $I$ we need to recall the following definition.

\chunk 
\label{GH5}
The ring $Q/I$  is of class $\clGH(5)$, or class $\D$, as originally defined in \cite[Theorem 2.2]{KM85}, if the graded commutative  Tor algebra 
\[
\A\colon=\Tor{}{Q}{Q/I}{\kk}=\A_0\oplus \A_1\oplus \A_2\oplus \A_3\oplus \A_4,
\] 
admits bases 
\begin{equation*}
\{\ee_1,\dots, \ee_6\}\ \text{for}\  \A_1,
\ \{\ff_1,\dots,\ff_5, \ff_1', \dots, \ff_5'\}\ \text{for}\  \A_2,
\ \{\eg_1,\dots, \eg_6\}\ \text{for}\  \A_3,
\ \text{and}\ \{\hh\}\ \text{for}\  \A_4
\end{equation*}
such that for all $1\leq i\leq 5$ and  $1\leq j\leq 6$ the following equalities hold:
\begin{align*}
  \ee_6\cdot \ee_i&= \ff_i,
& \ee_i\cdot \ff_i'&=\eg_6, 
& \ee_6\cdot \ff_i'&=-\eg_i,
& \ee_j\cdot \eg_j&=\hh, 
& \ff_i\cdot \ff_i'&=\hh;
    \end{align*}
all other products are zero. In particular, $\rank_k \A_1\cdot \A_1=5$. 

\begin{theorem}
\label{Iquadratic}
Let $\mathsf{k}$ be an algebraically closed field with $\cha\kk=0$ and $I$ a homogeneous grade four quadratic Gorenstein ideal of the polynomial ring $Q=\kk[x,y,z,w]$ containing the ideal $(x,y,z,w)^4$.
Then, the following conditions hold:
\begin{enumerate}[$(a)$]
    \item \label{IquadraticTor} The ring $Q/I$ is of class $\clGH(5).$
    \item \label{IquadraticJ} There exist a quadratic form ${f}$ and a grade three Gorenstein ideal ${J}$ of $Q$, minimally generated by five quadrics, such that:
    \[
    {J:(f)=J}\quad \text{and} \quad  {J+(f)=I}.
    \]\item There exists a quadratic form $f$ and 
    a grade three Gorenstein ideal ${J}$ of $Q$ minimally generated by five quadrics such that the following sequence is exact:
    \[
    0\to (Q/J)(-2)\xra{f\cdot} Q/J\xra{\pi} Q/I\to 0,
    \]
    where $\pi$ is the canonical projection.
\end{enumerate}
\end{theorem}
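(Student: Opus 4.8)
The plan is to prove the three parts in the order $(a)\Rightarrow(b)\Rightarrow(c)$, since part $(b)$ is essentially a reformulation of part $(a)$ in terms of ideals, and part $(c)$ is just the exact-sequence repackaging of part $(b)$ together with the doubling formalism of Section \ref{doubling}. So the real content is part $(a)$: the classification of the Tor algebra of a quadratic Artinian Gorenstein quotient of $Q=\kk[x,y,z,w]$ with socle degree three as being of class $\clGH(5)$.

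\medskip

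For part $(a)$, first recall from \ref{conca} that since $I$ is quadratic and $n=4\le 5$, the ring $Q/I$ is Koszul; equivalently all non-zero linear forms have rank at least two, so $\clGH(5)$ (class $\D$) is the expected Tor-algebra class. By the Buchsbaum--Eisenbud--Kustin--Miller classification of Tor algebras of codimension four Gorenstein quotients \cite{KM85}, the graded-commutative algebra $\A=\Tor{}{Q}{Q/I}{\kk}$ falls into one of finitely many classes, each pinned down by two numerical invariants: $p=\rank_\kk(\A_1\cdot\A_1)$ and $q=\rank_\kk(\A_1\cdot\A_2)$. I would compute these invariants for our specific situation. The resolution format is $\mff_I=(1,6,10,6,1)$, and since $I$ is quadratic the resolution is \emph{linear} in the appropriate range: $F_1$ is generated in degree $2$, $F_2$ in degree $3$, $F_3$ in degree $4$, $F_4=Q(-6)$. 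The key computation is that the products $\A_1\cdot\A_1\subseteq\A_2$ and $\A_1\cdot\A_2\subseteq\A_3$ are controlled by the (shifted) multiplication on the minimal resolution; since everything is generated in the lowest possible degree, the algebra structure is forced. Concretely, Avramov's results \cite{Avr89} on the structure of Tor algebras, combined with the Koszul property and the Hilbert-function constraint $\HF{Q/I}=1+4t+4t^2+t^3$, should force $p=5$ and $q=1$, which by the \cite{KM85} classification is exactly the class $\clGH(5)$ with the multiplication table displayed in \ref{GH5}. The duality $\A_i\cong\Hom{\kk}{\A_{4-i}}{\kk}$ coming from Poincaré duality of the Gorenstein algebra $Q/I$ pairs $\A_1$ with $\A_3$ and makes $\A_2$ self-dual, which is what produces the $\ff_i,\ff_i'$ split basis.

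\medskip

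For part $(b)$: given that $Q/I$ is of class $\clGH(5)$, I would invoke the structure theory that attaches to class $\D$ (equivalently $\clGH(5)$) Gorenstein ideals a ``hyperplane section'' description. The basis element $\ee_6\in\A_1$ dual to $\hh\in\A_4$ under the pairing, together with the relations $\ee_6\cdot\ee_i=\ff_i$ and $\ee_6\cdot\ff_i'=-\eg_i$, indicates that one of the six minimal generators of $I$ plays the role of a ``new'' relation adjoined to a grade-three ideal. Lifting $\ee_6$ to a degree-two element $f\in I$ and letting $J$ be the ideal generated by (lifts of) $\ee_1,\dots,\ee_5$, I would show $J$ is a grade three Gorenstein ideal minimally generated by those five quadrics, $f$ is a nonzerodivisor on $Q/J$ (i.e.\ $J:(f)=J$), and $J+(f)=I$. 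The non-zerodivisor property is equivalent to $\depth Q/J=1$, which follows because $Q/J$ is Cohen--Macaulay of dimension one (grade three perfect ideal in a four-variable ring); the Gorenstein property of $Q/J$ follows from the resolution format $(1,5,5,1)$ forced by the doubling (the cone construction of Definition \ref{def: doubling} run in reverse, reading off the Betti table in the first displayed Betti table of the Proposition*). That $J$ is genuinely five quadrics and not fewer is read from the Betti table. The identity $J+(f)=I$ follows by a dimension count on graded pieces using the Hilbert function.

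\medskip

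For part $(c)$: this is immediate from $(b)$. From $J:(f)=J$ we get the short exact sequence $0\to (Q/J)(-2)\xra{f\cdot}Q/J\to Q/(J+(f))\to 0$, and $J+(f)=I$ identifies the cokernel with $Q/I$; the map $Q/J\to Q/I$ is the canonical projection. Note also that by \eqref{Gor canonical}, since $Q/J$ is Gorenstein with top degree in its resolution equal to $5$, $\omega_{Q/J}\cong (Q/J)(-1)$, so $(Q/J)(-2)\cong\omega_{Q/J}(-1)$ and the sequence is indeed a doubling in the sense of Example \ref{exp doubling}, matching the Main Theorem's sequence $0\to\omega_{Q/J}(-3)\to Q/J\to Q/I\to 0$ after accounting for the degree normalization.

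\medskip

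\textbf{Main obstacle.} The crux is part $(a)$, and within it the passage from ``Koszul / all linear forms have rank $\ge 2$'' to the precise $\clGH(5)$ multiplication table. The \cite{KM85} classification gives a finite list, and ruling out the other codimension-four Gorenstein Tor-algebra classes (classes $\clGH(r)$ for $r\neq 5$, and the classes coming from Golod or complete-intersection-like behavior) requires knowing both invariants $p$ and $q$ precisely. Computing $p=\rank_\kk(\A_1\cdot\A_1)$ amounts to understanding which degree-three syzygies among the six quadrics are ``Koszul'' (products of pairs of the quadrics), and the linearity of the resolution plus the Hilbert function is what I expect to force $p=5$; extracting this cleanly, rather than by a brute-force Gröbner computation, is the delicate point, and this is presumably where the cited results of Avramov \cite{Avr89} and Conca--Rossi--Valla \cite{CRV01} do the heavy lifting.
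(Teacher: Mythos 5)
Your high-level plan (Koszul $\Rightarrow$ class $\clGH(5)$ $\Rightarrow$ hyperplane-section description with $J$ a grade-three Gorenstein Pfaffian ideal) matches the paper's strategy, but both of the substantive steps have gaps in your sketch, and one of them is a genuine circularity.

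For part $(a)$, you correctly identify the tools but do not actually supply the argument, as you yourself acknowledge. You propose computing the invariants $p=\rank_\kk(\A_1\cdot\A_1)$ and $q=\rank_\kk(\A_1\cdot\A_2)$ directly and expect the linearity of the resolution to force $p=5$, $q=1$. The paper instead uses a cleaner and shorter mechanism: Koszulness gives the Poincar\'e series identity $\Po{Q/I}{\kk}(t) = 1/\HF{Q/I}(-t) = 1/(1-4t+4t^2-t^3)$, which after clearing denominators matches one precise row of Avramov's Table~1 in \cite{Avr89} --- a table that classifies codepth-four Gorenstein local rings by their Poincar\'e series --- and that row is $\clGH(5)$. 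This replaces the ``delicate point'' you flag with a one-line lookup; it is genuinely different from and more efficient than trying to compute $p$ and $q$ from the resolution.

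For part $(b)$, your argument is circular where the paper's is not. You assert that ``the Gorenstein property of $Q/J$ follows from the resolution format $(1,5,5,1)$ forced by the doubling'' --- but the existence of the doubling, and in particular the fact that $J$ has that resolution format, is exactly what is being proved; you cannot read the Betti table of $J$ off the Betti table of $Q/I$ without first knowing that $I$ doubles $J$. The paper avoids this by an argument in the opposite direction: it shows that $\A$ is free over $\kk[\ee_6]$ with explicit basis, sets $f=\partial_1(e_6)$, invokes the proof of \cite[Lemma 3.3]{Avr89} to identify $\A/\ee_6\A$ with $\Tor{}{Q/(f)}{(Q/(f))/(I/(f))}{\kk}$, uses freeness over $\kk[\ee_6]$ plus Poincar\'e duality of $\A$ to conclude that $\A/\ee_6\A$ has Poincar\'e duality (hence $I/(f)$ is grade-three Gorenstein in $Q/(f)$), applies Buchsbaum--Eisenbud over $Q/(f)$ to produce a $5\times 5$ skew-symmetric $\widetilde T$, lifts $\widetilde T$ to $T$ over $Q$, and defines $J$ as the Pfaffian ideal of $T$. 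Only then do the facts $I=(f)+J$ and $\grade_Q J=3$ come for free, and Buchsbaum--Eisenbud again gives Gorensteinness of $J$; the nonzerodivisor claim $J:(f)=J$ needs more than your ``$Q/J$ is CM of dimension one'' --- the paper also uses that $Q/I\cong (Q/J)/(\overline f)$ is Artinian, so $\overline f$ cannot lie in any (minimal) associated prime of $Q/J$. Your choice of $J$ as ``the ideal generated by lifts of $\ee_1,\dots,\ee_5$'' also does not automatically carry the Pfaffian structure that makes it Gorenstein; the paper's lifting through $Q/(f)$ is what imposes that structure.
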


\begin{proof} \eqref{IquadraticTor}:  Since the ring $Q/I$ is quadratic, it is  Koszul by \ref{conca}.  Therefore,  we have the first equality of the following:
\[\Po{Q/I}{\kk}(t)=\frac{1}{\HF{Q/I}(-t)}=\frac{1}{1-4t+4t^2-t^3}=\frac{(1+t)^4}{(1+t)^2(1-2t-3t^2+3t^3+2t^4-t^5)}.
\]
The second equality uses the hypothesis and the third is obtained by multiplying the fraction by $\frac{(1+t)^4}{(1+t)^2(1+t)^2}.$ 
By the classification of codepth four Gorenstein rings in \cite[Table 1, page 50]{Avr89}, based on the  structure of the Tor algebra $\A$, we conclude that the ring $Q/I$ is of class $\clGH(5)$; see \ref{GH5}. 

\eqref{IquadraticJ}: By part \eqref{IquadraticTor}, the ring $Q/I$  is of class $\clGH(5)$. If we consider the bases of $\A_i$ for $1\leq i\leq 4$, as in \ref{GH5},
it is easy to check that  $\A$ is a free algebra over its sub-algebra  $\kk[\ee_6]=\kk\oplus \kk\cdot\ee_6$, with basis $\{1, \ee_1,\dots, \ee_5, \ff_1',\dots, \ff_5', \eg_6\}$. 
Consider $(\bF,\partial)$ a minimal free graded resolution of the $Q$-module $Q/I$
such that the standard bases for $F_i$ are the liftings of the bases above for $\A_i$. If $\{e_1, \dots, e_6\}$ denotes the  standard basis for $F_1 = Q^6$, then set 
\[
f\colon= \partial_1(e_6).
\]
The quadratic form $f$ is regular in $Q$, as $f\not=0$ and $Q$ is a domain. By the proof of \cite[Lemma 3.3]{Avr89}, we have 
\begin{equation*}
\A/\ee_6\A\cong\Tor{}{Q/(f)}{Q/I}{\kk}\cong\Tor{}{Q/(f)}{{Q/(f)}/{I/(f)}}{\kk}.
\end{equation*}
 Since $Q/I$ is a Gorenstein ring, the algebra $\A$ has Poincar\'e duality \cite{AG71} (see also \cite[Theorem 3.4.5]{BH93}). 
 Using that  $\A$ is a free algebra over $\kk[\ee_6]$ we obtain that the algebra $\A/\ee_6\A$ also has Poinar\'e duality.
Hence, again by \cite{AG71}, it follows that  $I/(f)$ is a grade three Gorenstein ideal of $Q/(f)$.
By the Buchsbaum-Eisenbud Structure Theorem of grade three Gorenstein ideals, see \cite[Theorem 2.1(2)]{BE77a} (see also \cite[Theorem 3.4.1(b)]{BH93}), there exists a skew\h{symmetric} $5\times 5$ matrix $\widetilde T$ with entries in $Q/(f)$ such that the ideal $I/(f)$ is given by the sub-maximal Pfaffians of the matrix $\widetilde T$.
Consider a skew-symmetric $5\times 5$ matrix $T$ with coefficients in $Q$ such that $T\otimes Q/(f)=\widetilde T$ and define $J$ to be the ideal generated by the sub-maximal Pfaffians of the matrix $T$. In particular, we have $I=(f)+J$ and so $J$ is minimally generated by five quadrics. Since $\grade{Q}{I}=4$ and $f$ is regular,  we have $\grade{Q}{J}=3$. Thus, again by the Buchsbaum-Eisenbud Structure Theorem, see \cite[Theorem 2.1(1)]{BE77a} (see also \cite[Theorem 3.4.1(a)]{BH93}),  we  conclude that $J$ is a Gorenstein ideal of $Q$. Set $\overline f:=f+J$. The equality $J:(f)=J$ holds if and only if $\overline f$ is a regular element of $S=Q/J$. This follows from the fact that the ring  $Q/I\cong S/(\overline f)$ is Artinian  and $S$ has depth one, as $J$ is a grade three perfect ideal in the regular ring $Q$. 

(c): If follows directly from part (b). \end{proof}

\begin{corollary}
\label{cor: Iquadratic}
Let $\mathsf{k}$ be an algebraically closed field with $\cha\kk=0$ and $I$ a homogeneous grade four quadratic Gorenstein ideal of the polynomial ring $Q=\kk[x,y,z,w]$ containing the ideal $(x,y,z,w)^4$. Then, the following assertions hold.
\begin{enumerate}[$(a)$]
\item There exists a quadratic Gorenstein grade three ideal $J$ and a quadratic form $f$ yielding an embedding $\iota$ such that the  short sequence
\[
0\to \omega_{Q/J}(-3)\xra{\iota} Q/J\xra{\pi} Q/I\to 0,
\]
where $\pi$ is the canonical projection, is exact.
\item A graded minimal free resolution of $Q/I$ over $Q$ has the form:
\[
\bF\colon 0\to 
Q(-7)\xra{}
Q^6(-5)\xra{}
\begin{matrix}Q^5(-4)\\ \oplus\\ Q^5(-3)\end{matrix}
\xra{}
Q^6(-2)\xra{}
Q.
\]
\end{enumerate}
\end{corollary}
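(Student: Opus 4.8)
The goal is to deduce Corollary \ref{cor: Iquadratic} from Theorem \ref{Iquadratic} together with the general doubling machinery of Section \ref{doubling}. Part (a) is essentially a homogeneous refinement of Theorem \ref{Iquadratic}(c): there we already have a quadratic Gorenstein grade three ideal $J$, a quadratic form $f$ with $J:(f)=J$, and an exact sequence $0\to (Q/J)(-2)\xra{f\cdot} Q/J\xra{\pi} Q/I\to 0$. What remains is to identify the shifted module $(Q/J)(-2)$ with $\omega_{Q/J}(-3)$ and to recognize the multiplication-by-$f$ map as a genuine embedding $\iota$ of a shift of the canonical module, so that this is literally a doubling in the sense of Definition \ref{def: doubling}. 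First I would invoke \ref{homogeneous canonical}, equation \eqref{Gor canonical}: since $J$ is grade three perfect in $Q=\kk[x,y,z,w]$ with $n=4$ and $Q/J$ Gorenstein, $\omega_{Q/J}=Q/J(-4+\max_i a_i)$, where $Q(-a_i)$ are the summands of the last free module in a minimal resolution of $Q/J$. So I must compute that last twist. Because $I$ is generated by six quadrics and $J$ by five quadrics, and $I=(f)+J$ with $f$ quadratic, the resolution $\bG$ of $Q/J$ is the Buchsbaum--Eisenbud Pfaffian complex on a skew-symmetric $5\times5$ matrix $T$ whose Pfaffian generators are quadrics; for such a matrix the entries are linear, so $T$ has degree one, giving $\bG\colon 0\to Q(-5)\to Q^5(-3)\to Q^5(-2)\to Q\to 0$ (the standard self-dual Pfaffian format with first syzygies in degree $3$, last twist $5$). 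Hence $\max_i a_i=5$ and $\omega_{Q/J}=Q/J(1)$, so $\omega_{Q/J}(-3)=Q/J(-2)$, matching exactly. Then $\iota\colon\omega_{Q/J}(-3)=Q/J(-2)\xra{f\cdot}Q/J$ is the required embedding — injectivity is exactly $J:(f)=J$ from Theorem \ref{Iquadratic}(b) — and (a) follows.

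**For part (b),** the plan is to apply Lemma \ref{exact F} with $S=Q$, $g=3$: the doubling $\bF:=\cone\overline\iota$ of the resolution $\bG$ of $Q/J$ via a chain lift $\overline\iota\colon\Sigma^{-3}\bG^*\to\bG$ of $\iota$ is a free resolution of $Q/I$, and since everything in sight is graded and minimal (all maps have positive-degree entries because $f$ and all the $\partial^{\bG}$ entries are in the maximal ideal), $\bF$ is the graded minimal free resolution. It then remains to read off the graded Betti numbers. The complex $\bG$ above has graded pieces $G_0=Q$, $G_1=Q^5(-2)$, $G_2=Q^5(-3)$, $G_3=Q(-5)$. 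Dualizing and shifting: $(\Sigma^{-3}\bG^*)_i = (G_{3-i})^*$, and since $\iota$ lands in degree-zero-shift copy $Q/J$ but originates from $\omega_{Q/J}(-3)$, the lift $\overline\iota$ carries an overall twist by $-3+(-n+\max a_i)=-3+1=-2$ relative to the naive dual — concretely $\Sigma^{-3}\bG^*$ twisted so that its $0$th term is $Q(-2)$ (matching $(Q/J)(-2)$), $1$st term $Q^5(-3)$, $2$nd term $Q^5(-5)$, $3$rd term $Q(-7)$. The mapping cone then has
\[
F_0=Q,\quad F_1=Q^5(-2)\oplus Q(-2)=Q^6(-2),\quad F_2=Q^5(-3)\oplus Q^5(-4),
\]
\[
F_3=Q(-5)\oplus Q^5(-5)=Q^6(-5),\quad F_4=Q(-7),
\]
which is exactly the claimed format. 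Minimality is automatic from the degree bookkeeping: the only potential cancellation would be between the $Q(-5)$ from $G_3$ and the $Q^5(-5)$ from the twisted dual in $F_3$, or between summands in $F_2$, but these sit in the same homological degree and the differential of a mapping cone has no components within a single $F_i$, so no unit entries appear.

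**The main obstacle** I anticipate is pinning down the twists precisely — getting the overall shift on $\Sigma^{-3}\bG^*$ right so that $\overline\iota$ is degree-preserving, which amounts to correctly tracking the identification $\omega_{Q/J}\cong Q/J(1)$ through the dual complex, and making sure the resulting $F_2=Q^5(-3)\oplus Q^5(-4)$ really has the summand in degree $4$ and not $3$ (this is where the self-duality of the Pfaffian complex, shifted by the canonical twist, is doing the work). Once \eqref{Gor canonical} is in hand and the Pfaffian resolution format of $\bG$ is written down, the rest is the routine mapping-cone computation sketched above; I would present it as: (i) recall $\bG$ and its twists from the Buchsbaum--Eisenbud structure theorem as used in the proof of Theorem \ref{Iquadratic}; (ii) compute $\omega_{Q/J}$ via \eqref{Gor canonical}; (iii) form $\Sigma^{-3}\bG^*$ with the correct canonical twist; (iv) invoke Lemma \ref{exact F} and write out $\cone\overline\iota$; (v) observe minimality from degrees.
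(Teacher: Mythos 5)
Your proposal is correct and follows essentially the same route as the paper: recover the Buchsbaum--Eisenbud Pfaffian resolution $\bG$ of $Q/J$ with linear entries, compute $\omega_{Q/J}=(Q/J)(1)$ via \eqref{Gor canonical} so that $(Q/J)(-2)=\omega_{Q/J}(-3)$, and read part (b) off the mapping cone as in Lemma \ref{exact F}. The only slip is a typo in the listed twists for the shifted dual — its term in homological degree $1$ should be $Q^5(-4)$, not $Q^5(-3)$ — but this is evidently a transcription error, since your final mapping-cone computation $F_2=Q^5(-3)\oplus Q^5(-4)$ uses the correct value.
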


\begin{proof} (a):  Let $J$ be the quadratic grade three Gorenstein ideal and $f$ the qudratic form  given by  Theorem \ref{Iquadratic}. By the Buchsbaum-Eisenbud Structure Theorem \cite[Theorem 2.1]{BE77a} (see also \cite[Theorem 3.4.1(b)]{BH93}), there exists a minimal  free resolution  of $Q/J$ over $Q$  of the following form:
\[\bG\colon
0\to
Q\xra{(\partial_1^\bG)^*}
Q^5\xra{\partial_2^\bG}
Q^5\xra{\partial_1^\bG}
Q,
\]
 where $\partial_2^\bG=-(\partial_2^\bG)^*$.
 Moreover, $J$ is generated by the  sub\h{maximal} Pfaffians of  the $5\times 5$  skew\h{symmetric} matrix $\partial_2^\bG$. Since $J$ is generated by quadrics, $\partial_2^\bG$ has linear entries.
 Therefore, a graded minimal free resolution  of $Q/J$ over $Q$  has the following form:
\[\bG\colon
0\to
Q(-5)\xra{(\partial_1^\bG)^*}
Q^5(-3)\xra{\partial_2^\bG}
Q^5(-2)\xra{\partial_1^\bG}
Q,
\]
By \eqref{Gor canonical}, the canonical module of $Q/J$ is $(Q/J)(-4+5)=(Q/J)(1)$, thus in the short exact sequence from Theorem \ref{Iquadratic}(c) one may replace $(Q/J)(-2)$ by $\omega_{Q/J}(-3)$, and $\iota\colon \omega_{Q/J}(-3)\to Q/J$ is given by multiplication by $f$. 

(b): The multiplication map $(Q/J)(-2)\xra{f\cdot} Q/J$ extends to a
chain map of free resolutions:
\begin{equation*}
 \xymatrixrowsep{1pc}
 \xymatrixcolsep{1pc}
 \xymatrix{
 \bG(-2)\colon\ar@{->}[d]^{f\cdot}&0\ar@{->}[r]
 &Q(-7)\ar@{->}[r]\ar@{->}[d]^{f\cdot}
 &Q^5(-5)\ar@{->}[r]\ar@{->}[d]^{f\cdot}
 &Q^5(-4)\ar@{->}[r]\ar@{->}[d]^{f\cdot}
 &Q(-2)\ar@{->}[d]^{f\cdot}
 \\
 \bG\colon&0\ar@{->}[r]
 &Q(-5)\ar@{->}[r]
 &Q^5(-3)\ar@{->}[r]
 &Q^5(-2)\ar@{->}[r] 
 &Q.
 }
\end{equation*}
The isomorphism $\Sigma^{-3}\bG^*\cong \bG$ and the embedding $\iota$ induce a homogeneous chain map $\overline\iota\colon \Sigma^{-3}\bG^*(-2)\to \bG$. By Lemma \ref{exact F}, the mapping cone of $\overline\iota$, denoted by $\bF,$ is a graded  free resolution of the $Q$\h{module} $Q/I$ of the desired form. Since all the differential maps $\partial^\bF_i$ have entries in $(x,y,z,w)$, the resolution $\bF$ is also minimal.
\end{proof}

\section{Non-quadratic Gorenstein ideals}
\label{NonquadGorideals}

In this section,  $\kk$ is an algebraically closed field with $\cha\kk=0$ and $I$ is a homogeneous non-quadratic Gorenstein ideal in the polynomial ring $Q=\kk[x,y,z,w]$ such that $(x,y,z,w)^4\subseteq I \subseteq (x,y,z,w)^2$. The standard-graded ring $Q/I$ is thus an Artinian Gorenstein ring of embedding dimension four, socle degree three, and the Hilbert function $\HF{Q/I}{(t)}=1+4t+4t^2+t^3$. Moreover, the ideal $I$ is minimally generated by at least seven polynomials. 
 
The following statement is a particular case of Proposition \ref{rankonelinearformdualgenerator}.  It  will allow us to give a precise description of all non-quadratic ideals $I$ as above.

\chunk
\label{nonquadraticdual}
The ring $Q/I$ admits a linear form of rank one if and only if after a linear change of variables, there exists a form $G$ of degree three in the divided power ring $\fD=\kk_{DP}[X,Y,Z,W]$ such that the dual generator $F$ of $I$ is of the form 
\begin{enumerate}[$(1)$]
\item\label{nonquadraticdualsquare} ${F=XY^{[2]} + G(Y,Z,W)}$, or 
\item\label{nonquadraticduaconnectedsum} ${F=X^{[3]} + G(Y,Z,W)}$.
\end{enumerate}

\chunk
\label{connected sum} Let $F\in\fD_3$ be a dual generator of the ideal $I$.
By  Ananthnarayan, Avramov, and Moore \cite{AAM12} the ring  $Q/I$ is a \emph{connected sum} if, after a change of variables, there exist $F'$ and $F''$ in $\fD_3$ such that the dual generator $F$ can be written as
\[
F = F'(X) + F''(Y,Z,W)\qquad\text{or}\qquad F = F'(X,Y) + F''(Z,W).
\]
In the first case, we say that the connected sum is of \emph{type} $(1,3)$ and in the second, it is of \emph{type} $(2,2)$.

\begin{remark}
\label{13-22}
A connected sum can be of both types $(1,3)$ and $(2,2)$, e.g.\ the Artinian Gorenstein algebra whose dual generator is the Fermat cubic is ${F=X^{[3]}+Y^{[3]}+Z^{[3]}+W^{[3]}}$. Note that by \cite[Proposition 4.1]{ACLY19}, the ring  $Q/I$ is a connected sum of type $(1,3)$ if, after a change of variables, the ideal $I$ contains $xy$, $xz$, and $xw$; it is of type $(2,2)$ if, after a change of variables, the ideal $I$ contains $xz$, $xw$, $yz$, and $yw$.
\end{remark}

\begin{example}
\label{excs22}
In the proof of the next result Proposition \ref{mu7-9 generators}, we will come across the dual generator
\[
F = XY^{[2]} +  Y(Z^{[2]}+ZW+aW^{[2]}) + Z^{[3]} + W^{[3]},
\]
with ${a\in\kk}$. The annihilator of $F$ is the ideal
\[
I=(x^2,\, xz,\, xw,\, xy+yz-z^2,\, a^2xy+yw-aw^2,\, zw,\, xy^2-w^3,\, y^3,\, z^3-w^3).
\]
We show that ${Q/I}$ is a connected sum: Consider the new variables
\begin{align*}
    x'&=x, & y'&=y-z-aw; & z'&=z+x; & w'=w+a^2x. 
\end{align*}
Then
\begin{align*}
    x'z'&=x(z+x)=xz+x^2;\\
    x'w'&=x(w+a^2x)=xw+a^2x^2;\\
    y'z'&=(y-z-aw)(z+x)=xy+yz-z^2-xz-axw-azw;\\ 
    y'w'&=(y-z-aw)(w+a^2x)=a^2xy+yw-aw^2-a^2xz-a^3xw-zw. 
\end{align*}
Therefore ${x'z',x'w',y'z',y'w'\in I}$, so  by Remark \ref{13-22} we conclude that $Q/I$ is a connected sum of type $(2,2)$.
\end{example}

The next result is a classification of non-quadratic Gorenstein ideals $I$ dicussed in this section.  Our proof uses some techniques from Casnati and Notari \cite{CN11}.
\begin{proposition}
\label{mu7-9 generators}
Let $\mathsf{k}$ be an algebraically closed field with $\cha\kk=0$ and $I$ a non-quadratic homogeneous ideal of the polynomial ring ${Q=\mathsf{k}[x,y,z,w]}$ such that $(x,y,z,w)^4 \subseteq I\subseteq (x,y,z,w)^2$.  Then, the ideal $I$ is minimally generated by seven or nine elements, and  after a change of variables, there exist $a,b\in \kk$ such that $F$ and $I$ admit one of the following descriptions:

 \begin{enumerate}[I.]
\item \textbf {Seven generators case}
\label{mu7-9seven}
\begin{flalign*}
\hspace{0.5cm}
{\mathbf a.}\quad  
  F&=XY^{[2]}  + Y(aZ^{[2]}+ZW+bW^{[2]})+Z^{[3]} + W^{[3]}&\\
   I&=\bigl(x^2,\, xz,\, xw,\, xy-zw,\, yz-az^2+(a^2+b)zw-w^2& \\
   &\hspace{3.9cm}yw-z^2+(a+b^2)zw-bw^2,\, y^3\bigr),&\\
   &&\\
{\mathbf b.}\quad 
   F&=XY^{[2]} + Y(Z^{[2]}+aZW)+ZW^{[2]}&\\
   I&=\bigl(x^2,\, xz,\, xw,\, xy-z^2,\, yz+a^2z^2-azw-w^2,\,
      yw-aw^2,\, y^3\bigr).&
\end{flalign*}

\item \label{mu7-9connectedsum7}
\textbf {Seven generators case}
\begin{flalign*}
\hspace{0.5cm}
{\mathbf a.}\quad 
    F&=X^{[3]} + Y^{[2]}Z - aZ^{[2]}W + (a+1)ZW^{[2]} - 3W^{[3]} &\\
    I&=\bigl(xy,\, xz,\, xw,\, yw,\, a^2y^2+azw+(a+1)z^2,&\\
     &\hspace{2cm}  z^2+a(a+1)bzw+a^2bw^2,\, x^3-y^2z\bigr),&\\
     &\qquad\text{with}\ a(a-1)(a^2-a+1)\not=0\quad\text{and}\quad b=(a^2-a+1)^{-1}.&\\
     &&\\
{\mathbf b.}\quad   
    F&=X^{[3]} + YZW - Z^{[3]} + W^{[3]}&\\
    I&=(xy,\, xz,\, xw,\, y^2,\, yz-w^2,\, yw+z^2,\, x^3-w^3),&\\
     &&\\
{\mathbf c.}\quad  
    F&=X^{[3]} + Y^{[3]} + YZ^{[2]} - YW^{[2]}&\\
    I&=(xy,\, xz,\, xw,\, y^2+w^2,\, z^2+w^2,\, zw,\, x^3-yz^2),&\\
     &&\\
{\mathbf d.}\quad 
    F&=X^{[3]} + YZW&\\
    I&=(xy,\, xz,\, xw,\, y^2,\, z^2,\, w^2,\, x^3-yzw).&
   \end{flalign*}
\item 
\label{mu7-9nine}
\textbf{Nine  generators case}
\begin{flalign*}
\hspace{0.5cm}
{\mathbf a.}\quad  
    F&=XY^{[2]} + YZW + ZW^{[2]}&\\
   I&=(x^2,\, xz,\, xw,\, xy+yz-zw,\, yw-w^2,\, z^2, \, y^2z,\, y^3,\, w^3),&\\
     &&\\
{\mathbf b.}\quad  
    F&=XY^{[2]} + Y(Z^{[2]}+aZW+bW^{[2]})+W^{[3]}&\\
    I&=\bigl(x^2,\, xz,\, xw,\, xy-z^2,\, az^2-zw,&\\
     &\hspace{1cm} ayz-yw+b(a^2-b)z^2+(b-a^2)w^2,\,
    xy^2-w^3,\, y^3,\, y^2z\bigr),&\\
     &&\\
{\mathbf c.}\quad  
    F&=XY^{[2]} + Y(ZW+aW^{[2]})+W^{[3]}&\\
    I&=(x^2,\, xz,\, xw,\, xy-zw,\, z^2,\, 
    axy+yz-w^2, y^3,\, y^2w,\, yw^2-aw^3),&\\
     &&\\
{\mathbf d.}\quad  
    F&=XY^{[2]}+ZW^{[2]}&\\
    I&=(x^2,xz,xw,yz,yw,z^2,y^3,xy^2-zw^2,w^3).&
\end{flalign*}
\item \textbf{Nine  generators case}
\label{mu7-9connectedsum9}
\begin{flalign*}
\hspace{0.5cm}
{\mathbf a.}\quad  
    F&= X^{[3]} + Y^{[2]}Z - aZ^{[2]}W + (a+1)ZW^{[2]} - 3W^{[3]}&\\
    I&=\bigl(xy,\, xz,\, xw,\, yw,\, y^2-(a-1)zw-(2a-1)z^2,\,&\\
    &\hspace{2cm} (2a-1)zw+(a-1)w^2,\, 3x^3+w^3,\, y^3,\, z^3 \bigr),&\\
    &\qquad\text{where}\ a^2-a+1=0,&\\
    &&\\
{\mathbf b.}\quad  
    F&=X^{[3]} + YZ^{[2]} + W^{[3]}\\
    I&=(xy,\, xz,\, xw,\, y^2,\, yw,\, zw, \, x^3-w^3,\, yz^2 - w^3,\, z^3),&\\
     &&\\
    {\mathbf c.}\quad  F&=X^{[3]} + Y^{[2]}Z + YW^{[2]}&\\
    I&=(xy,\, xz,\, xw,\, yz-w^2,\, z^2,\, zw,\, x^3-y^2z,\, y^3,\, y^2w).&
\end{flalign*}
\end{enumerate}
\end{proposition}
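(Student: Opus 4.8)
The plan is to start from the characterization of the dual generator given in \ref{nonquadraticdual}, splitting into the two shapes $F=XY^{[2]}+G(Y,Z,W)$ and $F=X^{[3]}+G(Y,Z,W)$, and then to normalize the cubic $G$ in the three remaining variables. Since $G\in\fD_3$ in three divided-power variables, the classification of plane cubic forms up to linear change of coordinates (over an algebraically closed field of characteristic zero) is the backbone: $G$ is, up to $\mathrm{GL}_3$-equivalence and scaling, one of finitely many normal forms (smooth cubic in Hesse form, nodal cubic, cuspidal cubic, conic-plus-transverse-line, conic-plus-tangent-line, triangle, line-plus-double-line, triple line, and the degenerate cases involving fewer variables). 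The subtlety is that in case \eqref{nonquadraticdualsquare} the variable $Y$ is already pinned down by the monomial $XY^{[2]}$, so we may only change variables among $Z,W$ and add multiples of them to $Y$; this is a smaller group than $\mathrm{GL}_3$, so I would need a refined normal-form list for $G(Y,Z,W)$ under this restricted group. In case \eqref{nonquadraticduaconnectedsum} we have the full $\mathrm{GL}_3$ acting on $G(Y,Z,W)$ together with scaling of $X$, so the ordinary plane-cubic classification applies directly, and the type $(1,3)$ connected-sum structure (Remark \ref{13-22}) is automatic since $xy,xz,xw\in I$.

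Next, for each normal form of $G$ I would compute $I=\Ann_Q F$ explicitly. Here the contraction action makes the computation mechanical: a polynomial $p\in Q_d$ annihilates $F$ iff $p\circ F=0$, which is a linear condition on the coefficients of $p$ for each degree $d=2,3$ (degrees $\ge 4$ are automatic since $F$ has degree $3$). So for each candidate $F$ one solves a small linear system in the $10$-dimensional space $Q_2$ and in $Q_3$, reads off a generating set, and then checks via the Hilbert function $1+4t+4t^2+t^3$ (which is forced) and minimal generation how many generators survive. The output is the explicit list in cases \ref{mu7-9seven}--\ref{mu7-9connectedsum9}, with the parameters $a,b$ recording the residual moduli that cannot be removed by the allowed changes of variables (e.g.\ the $j$-invariant of a smooth plane cubic, or the cross-ratio parameter in a conic-plus-line configuration). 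One must also argue that the number of minimal generators is always $7$ or $9$ and never $8$ or $\ge 10$ — this follows because $Q/I$ is non-quadratic Gorenstein of codepth four, so its Tor-algebra class is one of the non-$\clGH(5)$ classes in Avramov's classification \cite{Avr89}, and those classes force the Betti numbers listed in the Proposition right after the Main Theorem; alternatively it can be read off directly from the explicit generators in each case.

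The main obstacle I anticipate is the bookkeeping in case \eqref{nonquadraticdualsquare}: because only the restricted group $\mathrm{GL}_2$ (on $Z,W$) together with shears $Y\mapsto Y+(\text{linear in }Z,W)$ and scalings acts, several plane-cubic normal forms that would merge under full $\mathrm{GL}_3$ stay distinct here, and conversely one must check carefully that the listed parameters $a,b$ are genuine moduli and not artifacts. A further delicate point is that some of the resulting algebras are secretly connected sums (of type $(2,2)$), as illustrated in Example \ref{excs22}; these need to be identified so that they can later be handled by the connected-sum machinery rather than treated as ``generic'' cases — this is why the statement separates the ``Seven generators'' list \ref{mu7-9seven} from the ``Seven generators'' list \ref{mu7-9connectedsum7}, and similarly for nine generators. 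Concretely, I would, for each normal form of $G$, first decide whether $I$ contains a configuration forcing a connected-sum decomposition (using the criteria of Remark \ref{13-22} and the reference \cite{ACLY19}), route those into lists \ref{mu7-9connectedsum7} and \ref{mu7-9connectedsum9}, and collect the rest into lists \ref{mu7-9seven} and \ref{mu7-9nine}; the techniques of Casnati--Notari \cite{CN11} on the stratification of Gorenstein loci by apolarity provide a systematic way to organize this case analysis and to verify that the list is exhaustive.
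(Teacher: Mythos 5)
Your outline is right about where the proof starts (the two shapes from \ref{nonquadraticdual}) and right about case \eqref{nonquadraticduaconnectedsum}, where the paper does exactly what you say: apply the classification of plane cubics (Harris) to $G(Y,Z,W)$, compute $\Ann_Q F$ in each orbit, and record the residual $j$-invariant parameter. You also correctly spot that connected sums of type $(2,2)$ hiding inside shape \eqref{nonquadraticdualsquare} have to be rerouted into the $X^{[3]}$-shape analysis (cf.~Example \ref{excs22}).

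However, there is a genuine gap in your treatment of case \eqref{nonquadraticdualsquare}. You propose to classify the ternary cubic $G(Y,Z,W)$ under the restricted group (scaling/$\mathrm{GL}_2$ on $Z,W$ together with shears of $Y$), and you flag that you ``would need a refined normal-form list.'' This is exactly the step you do not supply, and it is where the difficulty lives. The paper avoids the restricted-group ternary classification entirely by a normalization trick you are missing: write $G(Y,Z,W)=Y^{[2]}L(Y,Z,W)+Y\,q(Z,W)+C(Z,W)$ with $L$ linear, $q$ a binary quadratic, $C$ a binary cubic, and then absorb the $Y^{[2]}L$ summand into $X$ via the substitution $X\mapsto X+L$, which is legitimate because $X$ appears in $F$ only through the monomial $XY^{[2]}$. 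After this reduction $F=XY^{[2]}+Y\,q(Z,W)+C(Z,W)$, and the classification collapses to that of the \emph{binary} cubic $C(Z,W)$ up to $\mathrm{GL}_2$ (three simple roots, a double plus a simple root, a triple root), followed by a short analysis of which coefficients $e_1,e_2,e_3$ of $q$ can be scaled or removed; the residual $a,b$ in the statement are precisely what survives this. Further $Z$- or $W$-shears by multiples of $Y$ are used along the way and the new $Y^{[2]}$-terms they create are re-absorbed into $X$. Without this reduction your plan stalls: a normal-form theory for ternary cubics modulo a proper subgroup of $\mathrm{GL}_3$ is a substantially harder problem and you do not carry it out.

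A secondary issue: your first justification for ``always $7$ or $9$, never $8$'' via Avramov's Tor-algebra classification is not the paper's argument and, as stated, is not self-contained — the Tor-class of a non-quadratic codepth-four Gorenstein ring does not by itself pin down the Betti numbers; the paper instead verifies the generator count from the explicit normal forms (equivalently from $\dim_\kk Q_1\cdot I_2$ being $18$ or $16$). Your fallback (``read it off from the explicit generators'') is correct but presupposes the explicit list, which returns you to the gap above.
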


\begin{proof} 
We follow a characterization from the proof of \cite[Proposition 4.6]{CN11}, with a slight adaption, mostly because we are working with contraction rather than differentiation. By \ref{conca}, the ring ${Q/I}$ admits a linear form of rank one as $I$ is non-quadratic ideal. Moreover, since the Hilbert function of $Q/I$ is $\HF{Q/I}{(t)}=1+4t+4t^2+t^3$, the ideal $I$ is minimally generated by exactly  six quadrics and at least one more generator. We prove that besides the six quadrics the ideal $I$ is minimally generated by either one or three cubics, depending whether the dimension of the $\kk$-vector space $Q_1\cdot I_2$ is $18$ or $16$, respectively. 
By \ref{nonquadraticdual}, $Q/I$ admits a dual generator of one of the form (\ref{nonquadraticdualsquare}) or (\ref{nonquadraticduaconnectedsum}). Once we have the description of the dual generator, the corresponding list of minimal generators of $I$ can be obtained by straightforward (even if tedious) computation.  

First, consider the case (\ref{nonquadraticdualsquare}) of  \ref{nonquadraticdual}, when after a change of variables we assume ${F=XY^{[2]} + G(Y,Z,W)}$. There exist a linear form $L\in\fD_1$, a cubic $C\in\fD_3$, and $e_1,e_2,e_3\in\kk$ such that:
\[
G(Y,Z,W)= Y^{[2]}L(Y,Z,W) + Y(e_1Z^{[2]}+e_2ZW+e_3W^{[2]}) + C(Z,W),
\]
hence $F=\big(X+L(Y,Z,W)\big)Y^{[2]}+Y(e_1Z^{[2]}+e_2ZW+e_3W^{[2]}) + C(Z,W)$. By replacing ${X+L(Y,Z,W)}$ with $X$, we can further assume that 
\begin{equation}
\label{mu7-9 generators_dualF}
F=XY^{[2]}+Y(e_1Z^{[2]}+e_2ZW+e_3W^{[2]}) + C(Z,W).
\end{equation}
Since $\kk$ is algebraically closed, $C$ has either three simple roots, or one double root and a simple root, or a triple root. 

If $C$ has \emph{three simple roots}, with a linear change of variables, we can assume ${C=Z^{[3]}+W^{[3]}}$, as $e_1$, $e_2$ and $e_3$ will change accordingly, therefore $F$ becomes:
\[
F = XY^{[2]} +  Y(e_1Z^{[2]}+e_2ZW+e_3W^{[2]}) + Z^{[3]} + W^{[3]}.
\]
If ${e_2\ne0}$, by replacing $Y$ by $e_2^{\,-1}Y$ and scaling  $X$, we get the case \ref{mu7-9seven}a, where $e_1$ is replaced by $a$ and $e_3$ by $b$. If ${e_2=0}$, then we can again scale $Y$ and $X$, and get the case we explored in Example \ref{excs22} (making ${e_3=a}$). Since it is a connected sum of type $(2,2)$, we may write ${F = F'(X,Y) + F''(Z,W)}$. Being a cubic in two variables, $F'$ can be written as ${F'=X^{[3]}+Y^{[3]}}$, if it has three simple roots, or ${F'=XY^{[2]}}$, if it has a double root; it cannot have a triple root, or we would have a linear element in $I$, contradicting the hypothesis that ${I\subseteq(x,y,z,w)^2}$.
If ${F'=X^{[3]}+Y^{[3]}}$, we are in case \eqref{nonquadraticduaconnectedsum} of \ref{nonquadraticdual}. We will address this case later in this proof. So we are left with ${F'=XY^{[2]}}$ and likewise ${F''=ZW^{[2]}}$, bringing us to case \ref{mu7-9nine}d.

If $C$ has \emph{one double root and a simple root}, again by a linear change of variables, we can assume ${C=ZW^{[2]}}$, and by replacing $Z$ with ${-e_3Y+Z}$ in \eqref{mu7-9 generators_dualF}, we get:
\begin{align*}
F &= XY^{[2]} + Y\bigl(e_1(-e_3Y+Z)^{[2]}+e_2(-e_3Y+Z)W+e_3W^{[2]}\bigr) + (-e_3Y+Z)W^{[2]}\\
&= XY^{[2]} + Y(e_1e_3^{\,2}Y^{[2]}-e_1e_3YZ+e_1Z^{[2]}-e_2e_3YW+e_2ZW) + ZW^{[2]}\\
&= (X+e_1e_3^{\,2}Y-2e_1e_3Z-2e_2e_3W)Y^{[2]} + Y(e_1Z^{[2]}+e_2ZW) + ZW^{[2]}.
\end{align*}
\\
If we replace the linear expression ${X+e_1e_3^{\,2}Y-2e_1e_3Z-2e_2e_3W}$ by $X$, we get
\[
F= XY^{[2]} + Y(e_1Z^{[2]}+e_2ZW) + ZW^{[2]}.
\]
If ${e_1\ne0}$, by scaling $Y$ and $X$ we get to case \ref{mu7-9seven}b. If ${e_1=0}$ but ${e_2\ne0}$, scaling again $Y$ and $X$, we get to case \ref{mu7-9nine}a. If ${e_1=e_2=0}$, we get again to case \ref{mu7-9nine}d.

Finally, if $C$ has \emph{one triple root}, we can assume ${C=W^{[3]}}$, thus $F$ from \eqref{mu7-9 generators_dualF} becomes:
\[
F = XY^{[2]} +  Y(e_1Z^{[2]}+e_2ZW+e_3W^{[2]}) + W^{[3]}.
\]
Note that $e_1$ and $e_2$ cannot both be zero, since in that case, we would have ${z\in I}$, contradicting the hypothesis that ${I\subseteq(x,y,z,w)^2}$. If ${e_1\ne0}$, we get case \ref{mu7-9nine}b. If ${e_1=0}$, then we must have ${e_2\ne0}$, and we get case \ref{mu7-9nine}c.

Second, consider the case \eqref{nonquadraticduaconnectedsum} of \ref{nonquadraticdual}, when after a change of variables we may assume ${F=X^{[3]} + G(Y,Z,W)}$. Next, we consider the  classification of the ternary cubics from the book of Harris \cite[Example 10.16]{Har92}.

If $G$ is a \emph{non-singular cubic}, then by a change of variables, we can write 
\[
G = Y^2Z - W(W-Z)(W-aZ) = Y^2Z -aZ^2W + (1+a)ZW^2 - W^3
\] 
with ${a\notin\{0,1\}}$ (note that we are using usual powers here). Moreover, two such cubics are projectively equivalent if and only if their $j$-invariants
\[
j(a)=256\cdot\tfrac{(a^2-a+1)^3}{a^2(a-1)^2}
\]
are the same. For a brief description of the $j$-invariants, see \cite[Example 10.12, Exercise 10.13]{Har92}.

Using that the relation between divided powers and usual ones is given by ${X^n=n!X^{[n]}}$, we get 
$F = X^{[3]} + 2Y^{[2]}Z - 2aZ^{[2]}W + 2(a+1)ZW^{[2]} - 6W^{[3]}.$
Dividing by $2$ and scaling $X$, we get 
\[
F=X^{[3]} + Y^{[2]}Z - aZ^{[2]}W + (a+1)ZW^{[2]} - 3W^{[3]}
\]
If ${a^2-a+1\ne0}$, we get to case \ref{mu7-9connectedsum7}a, where $I$ admits seven generators; otherwise we can note that both roots of ${a^2-a+1}$ yield a zero $j$-invariant, so we can choose one of them, and we get to case \ref{mu7-9connectedsum9}a.

If $G$ is \emph{an irreducible cubic with a node singularity}, then after a change of variables we can write ${G=YZW + Z^{[3]} + W^{[3]}}$,  and replacing $Z$ by $-Z$ and $Y$ by $-Y$, we get to case \ref{mu7-9connectedsum7}b.

If $G$ is \emph{an irreducible cubic with a cusp singularity}, then after a change of variables we can assume that ${G=YZ^{[2]} + W^{[3]}}$, and we get to  case \ref{mu7-9connectedsum9}b.

If $G$ is \emph{the union of a conic and a secant line}, then after a change of variables we can assume that  ${G=Y^{[3]} + YZ^{[2]} - YW^{[2]}}$, and we get to case \ref{mu7-9connectedsum7}c.

If $G$ is \emph{the union of a conic and a tangent line}, then after a change of variables  we can assume that 
${G=Y(YZ + W^{[2]})=2Y^{[2]}Z + YW^{[2]}}$
and by scaling $Z$ we get case \ref{mu7-9connectedsum9}c.

If $G$ is \emph{ union of three lines not meeting at a point}, then after a change of variables we can assume ${G=YZW}$, and we get to case \ref{mu7-9connectedsum7}d.

All other cases for $G$ imply that $I$ would have a linear form, contradicting the hypothesis.
\end{proof}

\begin{example} 
Consider the form  $F=X^{[3]}+XYZ+XYW+XZW+YZW$ in $\fD=\kk_{DP}[X,Y,Z,W]$. Then $\Ann_QF$ in $Q=\kk[x,y,z,w]$ is a quadratic ideal given by:
\[
\Ann_QF=(2x^2+xw-yw-zw,\, xy-yz-xw+zw,\, xz-yz-xw+yw,\,  y^2,\, z^2,\, w^2).
\]
This is the case because $\kk$ has characeristic zero. However, if we consider $F$ over a field of characteristic two, its annihilator ideal is not quadratic (the coefficient of $x^2$ in the previous set of generators vanishes). If $\kk'$ is the algebraic closure of $\zz_2$ and $Q'=\kk'[x,y,z,w]$, then  $\Ann_{Q'}F$ is not a quadratic ideal, as it is given by: 
\[
\Ann_{Q'}F=(xy+yz+yw,\, xz+yz+zw,\, xw+yw+zw,\,  y^2,\, z^2,\, w^2,\, x^3+yzw).
\]
This example shows that the hypothesis that $\kk$ has characteristic zero is crucial for Proposition \ref{mu7-9 generators}. A characteristic\h{free} result would have a much longer list of cases.
\end{example}

\section{Structure of Gorenstein ideals with 7 generators}
\label{Structure7gens}

The main result of this section is the following.

\begin{theorem}
\label{mu7 structure}
Let $\mathsf{k}$ be an algebraically closed field with $\cha\kk=0$ and $I$ a homogeneous Gorenstein ideal of the ring  $Q=\kk[x,y,z,w]$, minimally generated by seven elements, with $(x,y,z,w)^4\subseteq I\subseteq(x,y,z,w)^2$.
Then, there exists a grade three homogeneous perfect ideal $J$ of resolution format $\ff_J=(1,5,6,2)$, such that  $J\subset I$, and a
homogeneous embedding $\iota$  of $\omega_{Q/J}(-3)$
into $Q/J$ such that the following sequence is exact:
\[
0\rightarrow \omega_{Q/J}(-3)\xra{\iota} Q/J\xra{\pi} Q/I\rightarrow 0,
\]
where $\pi$ is the canonical projection.
\end{theorem}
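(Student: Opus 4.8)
The plan is to reduce to the explicit classification of Proposition~\ref{mu7-9 generators} and then to dispatch the six seven-generator families one at a time. Since $Q/I$ is non-quadratic with Hilbert function $\HF{Q/I}{(t)}=1+4t+4t^2+t^3$, that proposition applies, and after a linear change of variables --- which affects neither the hypothesis nor the conclusion --- $I$ is one of the six ideals appearing in items~\ref{mu7-9seven} and~\ref{mu7-9connectedsum7}; in each of them $I$ is minimally generated by six quadrics and one cubic $c$, and depends on at most two parameters $a,b\in\kk$. I would treat these six ideals separately.

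The first task in each case is to pin down $J$. I would take $J$ to be generated by five of the six quadric generators of $I$ --- the discarded one, call it $q$, being specified case by case, so that $I=J+(q,c)$ --- and then check two things. First, $\grade{Q}{J}=3$, i.e.\ $\dim Q/J=1$; this is an elimination computation exhibiting $V(J)$ as a curve, and in several of the families one checks directly that $V(J)$ is a union of coordinate lines. Second, that $J$ is perfect with resolution format $\mff_J=(1,5,6,2)$: here one writes down a complex
\[
\bG\colon\quad 0\longrightarrow Q(-5)\oplus Q(-4)\xra{\partial^\bG_3}Q^5(-3)\oplus Q(-4)\xra{\partial^\bG_2}Q^5(-2)\xra{\partial^\bG_1}Q,
\]
whose graded shape is that of a grade three perfect ideal of the type studied by Brown in \cite{Bro87}, built from a $5\times5$ skew-symmetric matrix of linear forms together with one extra column, and verifies that $\bG$ is acyclic by the Buchsbaum--Eisenbud criterion, the grade conditions on the ideals of minors being immediate once $\grade{Q}{J}=3$ is known. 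As the differentials of $\bG$ have entries in $(x,y,z,w)$, it follows that $\bG$ is the graded minimal free resolution of $Q/J$, whence $\mff_J=(1,5,6,2)$, $\pd{Q}{Q/J}=3=\grade{Q}{J}$ (so $J$ is perfect), and $Q/J$ has type two.

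Next I would construct the embedding $\iota$. As $J\subseteq I$, the canonical projection $\pi\colon Q/J\to Q/I$ is surjective with kernel $I/J$, which is generated over $Q/J$ by the classes of $q$ and $c$; a depth count applied to $0\to I/J\to Q/J\to Q/I\to 0$ shows that $I/J$ is a maximal Cohen--Macaulay $Q/J$-module of rank one. The heart of the matter is the identification $I/J\cong\omega_{Q/J}(-3)$. Conceptually this is where the Gorenstein hypothesis on $I$ is used: applying $\Hom{Q/J}{-}{\omega_{Q/J}}$ to the displayed sequence, together with $\Ext{Q}{3}{Q/I}{Q}=0$ and, over the one-dimensional Cohen--Macaulay ring $Q/J$, the local-duality isomorphism $\Ext{Q/J}{1}{Q/I}{\omega_{Q/J}}\cong (Q/I)^{\vee}\cong Q/I(s)$ (Matlis dual, which is $Q/I$ up to a shift $s$ because $Q/I$ is Gorenstein Artinian), one locates the canonical dual of $I/J$ in a short exact sequence with subobject $\omega_{Q/J}$ and, dualizing back over $Q/J$, identifies $I/J$ with the required twist of $\omega_{Q/J}$; in particular $Q/J$ is generically Gorenstein, as demanded by Definition~\ref{def: doubling}. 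In the actual write-up I would instead carry this out by hand in each family: present $\omega_{Q/J}(-3)$ (up to the appropriate graded twist) as the cokernel of $(\partial^\bG_3)^*$, check that the pair $(\bar q,\bar c)\in (Q/J)_2\oplus(Q/J)_3$ defines a map $\omega_{Q/J}(-3)\to Q/J$ --- Lemma~\ref{J entries} being the convenient tool for verifying that the relevant compositions with $(\partial^\bG_3)^*$ land in $J$ --- lift it to a chain map $\overline\iota\colon\Sigma^{-3}\bG^*\to\bG$ (suitably twisted), and check, again by a Buchsbaum--Eisenbud acyclicity argument on the mapping cone, that $\cone\overline\iota$ is acyclic with zeroth homology $Q/I$ and Betti numbers $\mff_I=(1,7,12,7,1)$. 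By Lemma~\ref{exact F} this gives the exactness of $0\to\omega_{Q/J}(-3)\xra{\iota}Q/J\xra{\pi}Q/I\to 0$.

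The step I expect to be the main obstacle is precisely this identification $I/J\cong\omega_{Q/J}(-3)$, equivalently the injectivity of $\iota$ together with the fact that its cokernel is exactly $Q/I$: that $I/J$ is maximal Cohen--Macaulay of rank one is formal, but recognizing it as a shift of the canonical module genuinely uses that $I$ is Gorenstein, and in the computational implementation this is where the bookkeeping changes from family to family --- both the matrix $\partial^\bG_2$ defining $J$ and the discarded quadric $q$ depend on the case and on the parameters $a,b$ of Proposition~\ref{mu7-9 generators} --- so that the real work lies in organizing these six verifications rather than in any single new idea.
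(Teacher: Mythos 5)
Your overall strategy matches the paper's: reduce via Proposition~\ref{mu7-9 generators} to the six seven-generator families, in each take $J$ to be five of the six quadric generators, exhibit a length-three resolution $\bG$ of Brown type and verify acyclicity by Buchsbaum--Eisenbud, and then identify $I/J$ with $\omega_{Q/J}(-3)$. The paper does exactly this (Setups~\ref{mu7-setup}, \ref{mu7-setup-c}, Propositions~\ref{J res mu7}, \ref{J res mu7-c}), and its Theorem~\ref{mu7 structure} proof just assembles those ingredients.

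Two points deserve correction. First, the ``conceptual'' local-duality sketch does not actually prove the identification $I/J\cong\omega_{Q/J}(-3)$. Applying $\Hom{Q/J}{-}{\omega_{Q/J}}$ to $0\to I/J\to Q/J\to Q/I\to 0$ yields $0\to\omega_{Q/J}\to (I/J)^{\vee}\to (Q/I)^{\vee}\to 0$, and dualizing once more simply recovers the sequence you started from; knowing that $(I/J)^{\vee}$ is \emph{some} extension of $Q/I(s)$ by $\omega_{Q/J}$ does not force $(I/J)^{\vee}$ to be cyclic (equivalently, $I/J$ to be a twist of $\omega_{Q/J}$). Since you say you would do the computation by hand anyway this is not fatal, but the phrase ``identifies $I/J$ with the required twist of $\omega_{Q/J}$'' overstates what that dualization argument yields. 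Second, the logical direction in the explicit route is reversed: Lemma~\ref{exact F} says that exactness of $0\to\omega_{Q/J}(-3)\to Q/J\to Q/I\to 0$ implies that $\cone\overline\iota$ is a resolution, not the converse; if you prove cone acyclicity first, the short exact sequence must be extracted from the long exact homology sequence of the mapping cone, not from Lemma~\ref{exact F}. That variant is workable, but the top Buchsbaum--Eisenbud grade condition on $\cone\overline\iota$ is $\grade{}{I_1(\partial^{\cone}_4)}\geq 4$ with $I_1(\partial^{\cone}_4)=J+I_1(\iota_3)$, so it forces you to compute the lift $\iota_3$ explicitly in each family, which the paper avoids. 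The paper instead proves $\Ker\varphi=\im\partial_3^{*}$ directly via the colon-ideal identities $J:(q)$, $J:(z)$, $J:(p)$ from Propositions~\ref{J colon} and \ref{colon II}, which is where the real case-by-case work lives; that piece is missing from your proposal. Finally, a small misattribution: Lemma~\ref{J entries} is used in the paper for the inclusion $\Ker\varphi\subseteq\im\partial_3^{*}$ (by reducing to entries lying in $J$), whereas the well-definedness check $\varphi\circ\partial_3^{*}\equiv 0\pmod J$ is a direct computation.
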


The proof of this theorem, given at the end of this section,  uses the description given in Proposition \ref{mu7-9 generators} of all possible ideals $I$ that are  minimally generated by seven elements, up to a linear change of variables, and Propositions \ref{J res mu7} and \ref{J res mu7-c}.

\begin{setup}
\label{mu7-setup} We consider the ideals $I$ as in  Proposition \ref{mu7-9 generators}.I.  
First, for each one of the cases {Ia and Ib}, we identify an ideal $J$ of $Q$ such that $J\subset I$.
Second, we use a terminology due to Brown \cite{Bro87} to  give a uniform description of the ideals $J$ that satisfy the conditions from Theorem \ref{mu7 structure}; see also Proposition \ref{J res mu7}.

\begin{enumerate}[\bf a.]
\item[\bf Ia.] Let $a,b\in \kk$. We may write  
\begin{flalign*}
\hspace{0.5cm}
I&=J+(xy-zw,y^3),\ \text{where}&\\
J&=(x^2,\,xz,\,xw,\,yz-az^2+(a^2+b)zw-w^2,\,yw-z^2+(a+b^2)zw-bw^2).&
\end{flalign*}

\item[\bf Ib.] Let $a\in \kk$. We may write   
\begin{flalign*}
\hspace{0.5cm}
I&=J+ \bigl(\, xy-z^2,\, y^3\bigr),\ \text{where}&\\  
J&=(x^2,\,xz,\, xw,\,yz+a^2z^2-azw-w^2,\,yw-aw^2).&
\end{flalign*}
\end{enumerate}
 If we consider 
\begin{equation*}
  \arraycolsep=5pt
  \def\arraystretch{1.5}
  \begin{array}{c|cccc}
    \text{\scriptsize Case } &A&B&C&D\\
    \hline
    \text{\scriptsize \bf Ia}&y-bw&z-(a+b^2)w&(a^2+b)z-w&-y+az\\
    \text{\scriptsize \bf Ib}&y-aw&0&-az-w&-y-a^2z\\
  \end{array}
\end{equation*}
then the non-zero submaximal Pfaffians of the skew-symmetric matrix
\begin{equation*}
  \label{matrix}
T=\begin{pmatrix}
0&0&A&B&0\\
0&0&C&D&0\\
-A&-C&0&x&z\\
-B&-D&-x&0&w\\
0&0&-z&-w&0
\end{pmatrix}
\end{equation*}
obtained by removing rows and columns 1, 2, and 5, are  given respectively by:
 
\begin{align*}
    T_1&=wC-zD=\begin{cases}
    yz-az^2+(a^2+b)zw-w^2,&\text{if\ {Ia}}\\
    yz+a^2z^2-azw-w^2,&\text{if\ {Ib}.}
    \end{cases}
    \\
    T_2&=wA-zB=
    \begin{cases}
    yw-z^2+(a+b^2)zw-bw^2&\text{if\ {Ia}}\\
    yw-aw^2,&\text{if\ {Ib}.}
    \end{cases}
    \\
    T_5&=BC-AD=
    \begin{cases}
    y^2-ayz-byw+(a^2+b)z^2&\\
    \hspace{1cm}
    -(a^3+a^2b^2+b^3+1)zw+(a+b^2)w^2&\text{if\ {Ia}}\\
    y^2+a^2yz-ayw-a^3zw,&\text{if\ {Ib}.}
    \end{cases}
 \end{align*}
Moreover, the following equalities hold:
\begin{equation*}
    J=(x^2,\ xz,\ xw,\ T_1,\  T_2)\quad\text{and}\quad
    I=J+ (q,\, y^3),
\end{equation*}
where 
\[q= xy -p\quad\text{where}\quad p= \begin{cases}zw,&
    \text{if\ {Ia}}\\
    z^2,&\text{if\ {Ib}.}
    \end{cases}
\]
It is easy to check  that the following equalities hold:
\begin{align}
\label{iden1}
zT_5&=AT_1-CT_2,\quad
wT_5=BT_1-DT_2,\quad\text{and}\\\
\label{iden2}
    T_5&=y^2+
\begin{cases}
-aT_1-bT_2-(ab - 1)^2zw,&\ 
\text{if Ia}\\
a^2T_1-aT_2-a^4z^2,&\ 
\text{if Ib}.
\end{cases} 
\end{align}
\end{setup}

We denote by $[-]$, the operation of taking modulo the ideal $J$ of an element in $Q$.
Based on a description from \cite[Theorem 4.4]{Bro87} have the following result:

\begin{proposition}
\label{J res mu7}
Adopt Setup  \ref{mu7-setup}. The following assertions hold.
\begin{enumerate}[$(a)$]
\item The ideal $J$ is a grade three perfect homogeneous  ideal with a graded minimal free resolution of $Q/J$ over $Q$ given by  
\[
{\bG}\colon 0\to 
\begin{matrix}Q(-5)\\ \oplus\\ Q(-4)\end{matrix}
\xra{\partial_3}
\begin{matrix}Q(-4)\\ \oplus\\ Q^5(-3)\end{matrix}
\xra{\partial_2}Q^5(-2)\xra{\partial_1}Q,
\]
where
   \begin{align*}
     \partial_3&=
     \begin{pmatrix}
     x&0\\
     T_1&0\\
     -T_2&0\\
     0&-w\\
     0&z\\
     T_5&-x
     \end{pmatrix},&
     \partial_2&=
     \begin{pmatrix}
       T_2&0&x&0&0&0\\
       T_1&-x&0&0&0&0\\
       0&A&C&0&-x&-z\\
       0&B&D&x&0&-w\\
       0&0&0&z&w&0
      \end{pmatrix},\\
     &\hspace{2cm}\text{and}&\partial_1&=\begin{pmatrix} T_1&-T_2&-xw&xz&-x^2\end{pmatrix}.
     \end{align*}
     \item The sequence 
$\begin{matrix}Q(-3)\\ \oplus\\ Q^5(-4)\end{matrix}
\xrightarrow{-\partial_3^*}
\begin{matrix}Q(-2)\\ \oplus\\ Q(-3)\end{matrix}
\xrightarrow{\varphi}I/J\to 0$
is exact, where 
\begin{align*}
\varphi&=
\begin{cases}
\begin{pmatrix}[xy-zw]&[y^3+(ab-1)^2y(xy-zw)]\end{pmatrix},&  \text{if}\  {Ia}\\
\begin{pmatrix}[xy-z^2]&[y^3+a^4y(xy-z^2)]\end{pmatrix},&
 \text{if}\  {Ib.}
 \end{cases}
\end{align*}
\end{enumerate}
\end{proposition}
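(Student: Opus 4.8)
The plan is to derive (a) by recognizing $\bG$ as the free resolution attached by Brown's structure theorem \cite[Theorem 4.4]{Bro87} to the skew-symmetric matrix $T$ of Setup \ref{mu7-setup}, and then to deduce (b) from (a) by dualizing $\bG$ and comparing Hilbert series.

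\emph{Part (a).} First I would check that $\bG$ is a complex: the vanishings $\partial_1\partial_2=0$ and $\partial_2\partial_3=0$ are direct matrix computations in which every nonzero coordinate of the product is a $Q$-linear combination of the Pfaffian relations $wC-zD=T_1$, $wA-zB=T_2$, $BC-AD=T_5$ and of $zT_5=AT_1-CT_2$, $wT_5=BT_1-DT_2$ from \eqref{iden1}. Since $\im\partial_1=(T_1,T_2,xw,xz,x^2)=J$, what remains is that $\bG$ is acyclic; this is the conclusion of Brown's theorem once one checks $T$ has the block shape required there and that deleting rows and columns $1,2,5$ from $T$ produces the submaximal Pfaffians $T_1,T_2,T_5$. (Self-contained alternative: apply the Buchsbaum--Eisenbud acyclicity criterion; the rank conditions $1+4=5$, $4+2=6$ with $\partial_3$ injective are immediate, and the depth conditions amount to $\grade{Q}{J}\ge1$, $\grade{Q}{I_4(\partial_2)}\ge2$, $\grade{Q}{I_2(\partial_3)}\ge3$.) In either route one needs $\grade{Q}{J}=3$; since $Q$ is Cohen--Macaulay this is the same as $\dim Q/J=1$, and $V(J)=V(x)\cap V(T_1,T_2)$ is isomorphic to the common zero set of $T_1,T_2$ in $\mathbb{A}^3_{y,z,w}$, which has dimension one for every value of the parameters: it is $\ge1$ by Krull's height theorem, and $\le1$ because $T_1$ is an irreducible quadric (it has no $y^2$-term and $y$-coefficient $z$, so a factorization into linear forms would force $z\mid T_1$, which is false) that does not divide $T_2$. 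Once $\bG$ resolves $Q/J$ we get $\pd{Q}{Q/J}=3=\grade{Q}{J}$, so $J$ is perfect, and $\bG$ is minimal because every entry of $\partial_1,\partial_2,\partial_3$ — namely $x^2,xz,xw,T_1,T_2,T_5$ and the linear forms $x,z,w,A,B,C,D$ — lies in $(x,y,z,w)$.

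\emph{Part (b).} Since $J$ is a grade-three perfect ideal, the dual complex $\bG^*$ resolves $\Ext{Q}{3}{Q/J}{Q}$, so its tail $G_2^*\xra{\partial_3^*}G_3^*\to\Coker\partial_3^*\to0$ is a presentation; under the twist identifying $G_2^*$ with $Q(-3)\oplus Q^5(-4)$ and $G_3^*$ with $Q(-2)\oplus Q(-3)$ this becomes $Q(-3)\oplus Q^5(-4)\xra{-\partial_3^*}Q(-2)\oplus Q(-3)\to\Coker(-\partial_3^*)\to0$, and $\Coker(-\partial_3^*)\cong\omega_{Q/J}(-3)$. Next I would verify $\varphi\circ(-\partial_3^*)=0$, i.e.\ that each of the six entries of the row $\varphi\cdot(-\partial_3^*)$ lies in $J$. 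The entries built from $x$, $T_1$, $T_2$ lie in $J$ at once; for the rest one uses \eqref{iden2} to rewrite $y^2$ modulo $J$, obtaining $y^3+cyq\equiv(y+cx)T_5\pmod{J}$ with $c=(ab-1)^2$ in case Ia and $c=a^4$ in case Ib, and then \eqref{iden1} to see $zT_5,wT_5\in J$. Hence $\varphi$ factors through a map $\overline\varphi\colon\Coker(-\partial_3^*)\to I/J$, which is onto since $I/J=(Q/J)[q]+(Q/J)[y^3]$ and both $[q]$ and $[y^3]$ lie in $\im\varphi$. Finally, reading $\HF{Q/J}(t)=(1+3t+t^2)/(1-t)$ off $\bG$ and using $\HF{Q/I}(t)=1+4t+4t^2+t^3$ gives $\HF{I/J}(t)=t^2(1+3t+t^2)/(1-t)$, while $\bG^*$ gives the same series for $\Coker(-\partial_3^*)$; a surjection of finitely generated graded modules with equal Hilbert series is an isomorphism, so $\overline\varphi$ is an isomorphism, $\ker\varphi=\im(-\partial_3^*)$, and the displayed sequence is exact.

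The step needing the most care is the grade computation in (a): one must ensure that for \emph{no} value of the parameters in the tables of Setup \ref{mu7-setup} does $V(J)$ acquire a two-dimensional component — a short but genuinely case-sensitive check — after which the only other non-routine point is carrying the reductions modulo $J$ in $\varphi\circ(-\partial_3^*)=0$ uniformly through cases Ia and Ib.
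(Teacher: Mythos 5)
Your part (b) takes a genuinely different and quite clean route: where the paper proves $\Ker\varphi\subseteq\im\partial_3^*$ directly, reducing via Lemma \ref{J entries} and column operations on $\partial_3^*$ to a colon-ideal computation $J\colon(q)=J+(x)$ (Proposition \ref{J colon}), you instead observe that $\varphi\partial_3^*=0$ gives a surjection $\overline\varphi\colon\omega_{Q/J}(-3)\to I/J$ and then compare Hilbert series. The series computation checks out: $\HF{Q/J}(t)=(1+3t+t^2)/(1-t)$ from $\bG$, $\HF{\omega_{Q/J}(-3)}(t)=t^2\HF{Q/J}(t)$ from $\Sigma^{-3}\bG^*$, and $\HF{I/J}(t)=\HF{Q/J}(t)-\HF{Q/I}(t)=t^2(1+3t+t^2)/(1-t)$, so $\overline\varphi$ is an isomorphism. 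This buys you a shorter argument that entirely avoids the colon-ideal lemma (which the paper needs to prove separately in the Appendix), at the mild cost of having to trust the numerical bookkeeping. Your dimension-theoretic argument that $\grade_Q J=3$ is also a valid alternative to the paper's route via the explicit regular sequence $\{x^2,T_1,T_2\}$ from Lemma \ref{reg seq I}, and your irreducibility check for $T_1$ is essentially the same Eisenstein-type observation the paper uses.

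However, there is a genuine gap in part (a). You state the Buchsbaum--Eisenbud acyclicity conditions $\grade_Q{I_4(\partial_2)}\ge2$ and $\grade_Q{I_2(\partial_3)}\ge3$ but never verify them, and these do not follow from $\grade_Q J=3$ alone: the BE criterion requires a separate check at each homological degree, and one can have a length-three complex with the correct ranks and $\grade I_1(\partial_1)=3$ that still has homology in degrees $1$ or $2$. The paper discharges these conditions by exhibiting explicit regular sequences, $\{x^4,zw^2T_2^2\}\subseteq I_4(\partial_2)$ and $\{x^2,wT_1,zT_5\}\subseteq I_2(\partial_3)$, in Lemma \ref{reg seq I}. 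Your primary appeal to Brown's theorem \cite[Theorem 4.4]{Bro87} does not close the gap either: that result describes the resolution format of grade-three perfect ideals of a certain class, and applying it in the direction you want would require either already knowing that $J$ is grade-three perfect, or verifying Brown's own hypotheses on the matrix $T$ — neither of which you do (and indeed the paper itself only borrows Brown's notation, proving acyclicity independently via BE). A secondary, minor point: once $\bG$ is known to be acyclic and $\grade_Q J=3$, perfection of $J$ makes $\Sigma^{-3}\bG^*$ acyclic automatically, so you do not need to argue that separately — but you do need the three grade conditions to get acyclicity of $\bG$ in the first place.
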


\begin{proof}
(a): It is clear that $J=\Coker\partial_1$. By definitions of $T_1,T_2$ and by \eqref{iden1} we get $\partial_1\partial_2=0$ and    $\partial_2\partial_3=0$, respectively. 
Thus $\partial_2^*\partial_1^*=0$ and $\partial_3^*\partial_2^*=0$. Therefore, both $\bG$ and $\Sigma^{-3}\bG^*$ are complexes. Using Lemma \ref{reg seq I} it is easy to see that following grade inequalities hold:
\begin{align*}
  \grade{}{I_1(\partial_1)}&\geq 3,\quad \text{as $I_1(\partial_1)$ contains the regular sequence $\{x^2,\,  T_1,\, T_2\}$}.\\
  \grade{}{I_4(\partial_2)}&\geq 2,\quad \text{as $I_4(\partial_2)$ contains the regular sequence $\{x^4,\, zw^2T_2^2\}$, and}\\
  \grade{}{I_2(\partial_3)}&\geq 3,\quad \text{as $I_2(\partial_3)$ contains the regular sequence $\{x^2,\, wT_1,\,  zT_5\}$}.
\end{align*}
By the Acyclicity Criterion \cite[Theorem 1.4.13]{BH93} it follows that both complexes $\bG$  and $\Sigma^{-3}{\bG}^*$ are acyclic, thus $J$ is a perfect ideal of grade three.

(b):  The surjectivity of $\varphi$ is clear. Indeed, if we set \begin{align*}
    p=\begin{cases}
       zw&\ \text{in case {Ia}}\\
       z^2&\ \text{in case {Ib}}
       \end{cases}
\quad
       \text{and}
 \quad      
       c=\begin{cases}
       (ab-1)^2&\ \text{in case {Ia}}\\
       a^4&\ \text{in case {Ib}}.
       \end{cases}
\end{align*}
and we further set  $q=xy-p$, then we may write
\begin{equation*}
    I=J+(q,\, y^3)\quad\text{and}\quad\varphi=\big([q]\quad [y^3+cyq]\big).
\end{equation*}
Remark that by \eqref{iden2} we have $[T_5]=[y^2-cp]$.

The inclusion $\im \partial_3^*\subseteq\Ker\varphi$ is equivalent to the equality  $\varphi\partial_3^*=0$. 
Remark that 
\[\partial_3^*=
\begin{pmatrix}
x&T_1&-T_2&0&0&T_5\\
0&0&0&-w&z&-x
\end{pmatrix}.
\]
The first three  components of the composition $\varphi\partial_3^*$, modulo $J$ are:
\begin{equation*}
[xq],
\quad
[T_1q],
\quad
\text{and}
\quad
-[T_2q].
\end{equation*}
They are all zero in $Q/J$ as $J=(x^2,\, xz,\, xw,\, T_1,\, T_2)$.
The fourth component is
\begin{equation*}
    -[w(y^3+cyq)]=-c[xw][y^2]-[yw(y^2-cp)]=-[y][wT_5]=0,
\end{equation*}
where the first equality follows from $q=xy-p$, the second equality follows as $[xw]=0$ and the last equality follows as $[wT_5]=0$ by \eqref{iden1}.
The fifth component is
\begin{equation*}
    [z(y^3+cyq)]=c[xz][y^2]+[yz(y^2-cp)]=[y][zT_5]=0,
\end{equation*}
where the second equality follows as $[xz]=0$ and the last equality follows as $[zT_5]=0$ by \eqref{iden1}.
Finally, the sixth component is
\begin{equation*}
  [T_5q-x(y^3+cyq)]=-[pT_5]+[xy][T_5-y^2+cp]+ [cy^2][x^2]=0, 
\end{equation*}
where the second equality follows as  $z$ divides $p$ and $[zT_5]=0$ by \eqref{iden1}. $[x^2]=0$, and $[T_5]=[y^2-cp]$.
 
To prove the inclusion $\Ker\varphi\subseteq\im\partial_3^*$, we consider $f$ and $g$ homogeneous polynomials in $Q$ such that 
\begin{equation}
\label{ker Iab}
    qf+(y^3+cyq)g\in  J.
\end{equation}
By Lemma \ref{J entries}, if  $f,g\in J$, then $\bigl(\begin{smallmatrix}f\\g\end{smallmatrix}\bigr)\in\im\partial_3^*$.  Moreover, by using the first and last three columns of $\partial_3^*$, we may assume that $f\in\kk[y,z,w]$ and $g\in\kk[y].$ In particular, there exists $e\in\kk$ such that $g=ey^n$ for some $n\geq 0.$
From \eqref{ker Iab} we deduce that $ey^{n+3}\in J+(q)\subseteq(x,\,z,\,w)$, hence $e=0$ and \eqref{ker Iab} becomes $qf\in J$, i.e.\ $f\in J\colon (q)$.
By Proposition \ref{J colon} we have $J\colon (q)=J+(x)$, thus $f\in J.$
We conclude that $\Ker\varphi=\im\partial_3^*$, thus the sequence in part (b) is exact. 
\end{proof}

\begin{setup}
\label{mu7-setup-c} 
We consider the ideals $I$ as Proposition in \ref{mu7-9 generators}.II that come from connected sums.  First, for each one of the cases IIa-IId,  we identify an ideal $J$ of $Q$ such that $J\subset I$. Second,  using the same terminology due to Brown \cite{Bro87}, we give a uniform description of the ideals $J$ that satisfy the conditions from Theorem \ref{mu7 structure}; see Proposition \ref{J res mu7-c}.

\begin{enumerate}[\bf a.]
\item[\bf IIa.] \textbf{Non-singular cubic.} Let $a\in \kk$ such that $a(a-1)(a^2-a+1)\not=0$ and set $b=(a^2-a+1)^{-1}$. We may write:
\begin{flalign*}
\hspace{0.5cm}
I&=J + \bigl(z^2+a(a+1)bzw+a^2bw^2,\ x^3-y^2z \bigr),\ \text{where}\\
J&=\bigl(xy,\, xz,\, xw,\, yw,\, a^2y^2+azw+(a+1)z^2\bigr)&   
\end{flalign*}
\item[\bf IIb.]  \textbf{Node-singularity.} We may write:
\begin{flalign*}
\hspace{0.5cm}
I&= J + (yw+z^2,\, x^3-w^3),\ \text{where}&\\
J&=(xy,\, xz,\, xw,\, y^2,\,yz-w^2)&
\end{flalign*}
\item[\bf IIc.]  \textbf{Conic and a secant line.}  We may write:
\begin{flalign*}
\hspace{0.5cm}
  I&= J + (z^2+w^2,\, x^3-yz^2),\ \text{where}&\\
  J&=(xy,\,xz,\, xw,\, y^2+w^2,\, zw)&
\end{flalign*}
\item[\bf IId.]   \textbf{Three lines not meeting at a point.} We may write:
\begin{flalign*}
\hspace{0.5cm}
    I&=J + (z^2,\, x^3-yzw),\ \text{where}&\\
    J&=(xy,\, xz,\, xw,\, y^2,\,w^2)&
\end{flalign*}
    \end{enumerate}
If we consider,
\begin{equation*}
  \arraycolsep=6pt
  \def\arraystretch{1.5}
  \begin{array}{c|cccc}
    \text{\scriptsize Case }&A&B&C&D\\
    \hline
    \text{\scriptsize \bf{IIa}}&y&0&a^{-1}z&-a^{-2}(a+1)z\\
    \text{\scriptsize \textbf{IIb}}&-w&-y&0&0\\
    \text{\scriptsize \textbf{IIc}}&z&0&w&0\\
   \text{\scriptsize \textbf{IId}}&w&0&0&0
  \end{array}
\end{equation*}
then the submaximal Pfaffians,   
$\{T_i\}_{1\leq i\leq 5}$, of the skew-symmetric matrix

\begin{equation}
  \label{matrix-c}
T=\begin{pmatrix}
0&0&A&B&0\\
0&0&C&D&y\\
-A&-C&0&y&z\\
-B&-D&-y&0&w\\
0&-y&-z&-w&0
\end{pmatrix},
\end{equation}
where $T_i$ is the Pfaffian of the matrix obtained from $T$ by removing row $i$ and column $i$ are:
\begin{align*}
  T_1&=wC-zD+y^2=
  \begin{cases} 
  y^2+a^{-2}(a+1)z^2+a^{-1}zw&\text{if {IIa}}\\
  y^2&\text{if {IIb, IId}}\\
  y^2+w^2&\text{if {IIc}}
  \end{cases}\\
  T_2&=wA-zB=\begin{cases} 
  yw&\text{if {IIa}}\\
  yz-w^2&\text{if {IIb}}\\
  zw&\text{if {IIc}}\\
  w^2&\text{if {IId}}
  \end{cases}\\
  T_3&=-yB=\begin{cases} 
  y^2&\text{if {IIb}}\\
  0&\text{otherwise}\\
  \end{cases}\\ 
  T_4&=-yA=\begin{cases} 
  -y^2&\text{if {IIa}}\\
  yw&\text{if {IIb}}\\
  -yz&\text{if {IIc}}\\
  -yw&\text{if {IId}}
  \end{cases}\\ 
  T_5&=BC-AD=\begin{cases} 
  a^{-2}(a+1)yz&\text{if {IIa}}\\
  0&\text{otherwise.}
  \end{cases}
\end{align*}
Moreover, the following equality holds
\begin{equation}
\label{defJ II}
    J=(xy,\,xz,\,xw,\,T_1,\,T_2).
\end{equation}
The following equalities are easy to check:
\begin{align}
\label{iden II}
&AT_1-CT_2+yT_4-zT_5=0,\\
&BT_1-DT_2+yT_3-wT_5=0,\, \text{and}\,\notag\\
&yT_2-zT_3+wT_4=0.\notag
\end{align}
\end{setup}

Based on a description from \cite[Theorem 4.4]{Bro87} we have the following result:

\begin{proposition}
\label{J res mu7-c} 
Adopt Setup \ref{mu7-setup-c}. Then the following assertions hold.
\begin{enumerate}[$(a)$]
\item  The ideal $J$ is a grade three perfect homogeneous  ideal with  a graded minimal free resolution of $Q/J$ over $Q$ given by  
\[
{\bG}\colon 0\to 
\begin{matrix}Q(-5)\\ \oplus\\ Q(-4)\end{matrix}
\xra{\partial_3}
\begin{matrix}Q(-4)\\ \oplus\\ Q^5(-3)\end{matrix}
\xra{\partial_2}Q^5(-2)\xra{\partial_1}Q,
\]
where
\begin{align*}
\partial_3&=
     \begin{pmatrix}
     x&0\\
     T_1&0\\
     -T_2&0\\
     T_3&-w\\
     -T_4&z\\
     T_5&-y
     \end{pmatrix},&
     \partial_2&=
     \begin{pmatrix}
 T_2&0&x&0&0&0\\
 T_1&-x&0&0&0&0\\
 0&A&C&0&-y&-z\\
 0&B&D&y&0&-w\\
 0&0&y&z&w&0
\end{pmatrix},\\
     &\hspace{2cm}\text{and}&\partial_1&=\begin{pmatrix} T_1&-T_2&-xw&xz&-xy\end{pmatrix}.
     \end{align*}
     \item The sequence 
$\begin{matrix}Q(-3)\\ \oplus\\ Q^5(-4)\end{matrix}
\xrightarrow{-\partial_3^*}
\begin{matrix}Q(-2)\\ \oplus\\ Q(-3)\end{matrix}
\xrightarrow{\varphi}I/J\to 0$
is exact, where 
\[
\varphi=\begin{cases}
    \begin{pmatrix}
\left[ z^2+(a^2b+ab)zw+a^2bw^2\right]&\left[x^3-y^2z\right]
    \end{pmatrix},& \text{if {IIa}}
    \\
    \begin{pmatrix}
    [z^2+yw]& [x^3+w^3]
    \end{pmatrix},&  \text{if {IIb}}
    \\
    \begin{pmatrix}
    [z^2+w^2]&[x^3-yz^2]
    \end{pmatrix},& \text{if {IIc}}
    \\
    \begin{pmatrix}
      [z^2]&[x^3-yzw]
    \end{pmatrix},& \text{if  {IId}}
\end{cases}
\]
with $b=(a^2-a+1)^{-1}$ in case IIa.
\end{enumerate}
\end{proposition}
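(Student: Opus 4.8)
The plan is to mimic the proof of Proposition \ref{J res mu7} almost verbatim, exploiting the uniform ``Brown-format'' presentation that Setup \ref{mu7-setup-c} has been engineered to produce. For part $(a)$, I would first observe that $J=\Coker\partial_1$ is immediate from \eqref{defJ II}, since the entries of $\partial_1$ are exactly (signed versions of) the five generators $xy,xz,xw,T_1,T_2$. Then I would verify $\partial_1\partial_2=0$ and $\partial_2\partial_3=0$ by direct computation; the first follows from the definitions of $T_1,T_2$ together with the first and third identities in \eqref{iden II}, and the second follows from all three identities in \eqref{iden II} (which is precisely why those three relations were recorded). Dualizing gives that $\Sigma^{-3}\bG^*$ is also a complex. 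To get acyclicity I would apply the Acyclicity Criterion \cite[Theorem 1.4.13]{BH93}: it suffices to exhibit, inside $I_1(\partial_1)$, $I_4(\partial_2)$, and $I_2(\partial_3)$, regular sequences of lengths $3$, $2$, and $3$ respectively. For the grade-three pieces one uses that $x^2,T_1,T_2$ (resp.\ $x^2,wT_1,zT_5$ — or an analogous triple adapted to the $T_5=0$ cases IIb, IIc, IId) sit inside the relevant Fitting ideal; for $I_4(\partial_2)$ one picks two monomials in $x$ and the $T_i$'s forming a regular sequence, exactly as in the proof of Proposition \ref{J res mu7}. Having both $\bG$ and $\Sigma^{-3}\bG^*$ acyclic shows $J$ is a grade three perfect ideal and $\bG$ resolves $Q/J$; minimality is clear since all entries of the $\partial_i$ lie in $(x,y,z,w)$.

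For part $(b)$, surjectivity of $\varphi$ is clear since $I=J+(g_1,g_2)$ where $g_1,g_2$ are the two entries of $\varphi$ (read off case by case from Proposition \ref{mu7-9 generators}.II and Setup \ref{mu7-setup-c}). The containment $\im(-\partial_3^*)\subseteq\Ker\varphi$ amounts to checking the six components of $\varphi\partial_3^*$ vanish modulo $J$. Writing $\partial_3^*$ out, the first three components are $[x g_1]$, $[T_1 g_1]$, $-[T_2 g_1]$, all zero since $x, T_1, T_2$ (and $xy,xz,xw$) lie in $J$; the last three components involve $g_2=x^3-(\text{quadratic})y$ type expressions and are handled using $[x^2]=0$, the identities \eqref{iden II}, and the explicit relations between $g_1$, the $T_i$, and $y^2$ that hold mod $J$ — I would record those auxiliary congruences (the analogue of \eqref{iden2}) before starting the componentwise check. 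For the reverse inclusion $\Ker\varphi\subseteq\im(-\partial_3^*)$, I follow the same two-step reduction as in Proposition \ref{J res mu7}: given $g_1 f + g_2 g\in J$, Lemma \ref{J entries} reduces to the case where we may assume $f,g$ lie in a complementary set of monomials (using the first and last three columns of $\partial_3^*$), forcing $f\in\kk[y,z,w]$ and $g\in\kk[y]$; a degree/support argument modulo $(x,z,w)$ kills $g$, leaving $g_1 f\in J$, i.e.\ $f\in J:(g_1)$; then a colon-ideal computation (the analogue of Proposition \ref{J colon}, presumably $J:(g_1)=J+(x)$ in each case) gives $f\in J$, hence $\binom{f}{g}\in\im(-\partial_3^*)$.

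The only genuinely new content versus the seven-generator-of-type-I case is bookkeeping: in cases IIb, IIc, IId one has $T_5=0$ and also $T_3,T_4$ partly degenerate, so several of the $\partial_3$, $\partial_2$ entries collapse and one must double-check that the chosen regular sequences inside the Fitting ideals are still honest regular sequences (this is where I would lean on whatever ``\texttt{Lemma \ref{reg seq I}}''-type lemma the paper has for producing regular sequences of monomials and products of the $T_i$). Accordingly, I expect the main obstacle to be purely computational: establishing the correct analogue of \eqref{iden2} — the congruences expressing $g_2$-components and $T_5$ (or $T_3$, $T_4$) in terms of $T_1,T_2,y^2$ modulo $J$ — separately in each of the four cases, and verifying the relevant colon ideal $J:(g_1)$ equals $J+(x)$ in each case; once those are in hand, the exactness arguments are formally identical to those in the proof of Proposition \ref{J res mu7}.
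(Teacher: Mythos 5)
The overall strategy — mimic the proof of Proposition~\ref{J res mu7}, verify that $\bG$ and its shifted dual are complexes via \eqref{iden II}, apply the Acyclicity Criterion, and for part $(b)$ reduce modulo $J$, kill $g$ by a support argument, and finish with a colon-ideal computation — is indeed the paper's strategy, and you correctly anticipate the colon computation $J\colon(g_1)=J+(x)$ (the paper's Proposition~\ref{colon II}). However, there are two concrete gaps where you have imported the type-I bookkeeping into a setting where it fails.

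First, in part $(a)$ you propose the regular sequence $\{x^2,T_1,T_2\}$ inside $I_1(\partial_1)$ (and $\{x^2,wT_1,zT_5\}$ inside $I_2(\partial_3)$). But in Setup~\ref{mu7-setup-c} the ideal is $J=(xy,\,xz,\,xw,\,T_1,\,T_2)$, not $(x^2,\,xz,\,xw,\,T_1,\,T_2)$ as in Setup~\ref{mu7-setup}; the monomial $x^2$ is not in $J=I_1(\partial_1)$, so the proposed sequence is simply not contained in the Fitting ideal. (Moreover $zT_5=0$ in cases IIb--d, but that is the secondary problem; the primary one is $x^2\notin J$.) Because the convenient regular sequences disappear, the paper abandons the explicit-sequence approach for $I_1(\partial_1)$ and $I_2(\partial_3)$ and instead argues geometrically: it shows directly that the scheme $V(J)\subset\BP^3$ is zero-dimensional (forcing $x=0$ leaves a finite locus in each of IIa--d), concludes $\grade{}{J}=3$ via \cite[Proposition 1.2.10(c)]{BH93}, and does the same for $I_2(\partial_3)$; only $I_4(\partial_2)$ is handled by exhibiting a regular sequence (namely $\{x^2z^2,wT_2^2\}$ or $\{x^2y^2,zT_2^2\}$, in which $x^2$ is harmless because $I_4(\partial_2)$ is a different, larger ideal). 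Your proposal would need to replace the regular-sequence argument for these two Fitting ideals by some other mechanism.

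Second, in part $(b)$ you reduce to $f\in\kk[y,z,w]$ and $g\in\kk[y]$, then kill $g$ ``modulo $(x,z,w)$.'' This is the type-I reduction and it does not transfer. In cases II the bottom row of $\partial_3^*$ reads $(0,0,0,-w,z,-y)$ (not $(0,0,0,-w,z,-x)$ as in case I), so the last three columns let you reduce $g$ modulo $(y,z,w)$, hence to $g\in\kk[x]$, not $\kk[y]$. And the cubic generator of $I$ outside $J$ is now of the form $x^3-r$ with $r\in(y,z,w)$, so the correct killing argument is: $g=ex^n$ gives $ex^{n+3}\in J+(q,r)\subseteq(y,z,w)$, forcing $e=0$. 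With $g\in\kk[y]$ as you wrote, $(x^3-r)g$ produces no pure power of any variable and the support argument stalls. Once these two points are repaired, the rest of your plan matches the paper's proof.
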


\begin{proof} 
(a): 
It is clear that $J=\Coker\partial_1$. By the definitions of $T_1$ and $T_2$, and by \eqref{iden II} we get $\partial_1\partial_2=0$ and    $\partial_2\partial_3=0$, respectively. Thus $\partial_2^*\partial_1^*=0$ and $\partial_3^*\partial_2^*=0$. Therefore, both $\bG$ and $\Sigma^{-3}\bG^*$ are complexes. As in the proof of Proposition \ref{J res mu7}, we want to show that the grade of the ideal ${I_4(\partial_2)}$ is at least $2$ and the grades of the ideals  $I_1(\partial_1)$ and $I_2(\partial_3)$ are at least $3$.  

It is easy to see that the sequences $\{x^2z^2,\, wT_2^2\}$ in the cases IIa,b,d and $\{x^2y^2,\, zT_2^2\}$ in the case IIc are regular and contained in the ideal $I_4(\partial_2)$.  Thus, $\grade{}{I_4(\partial_2)}\geq 2$.

Now, we show that ${\grade{}{I_1(\partial_1)}=3}$ by proving  that the scheme defined by this ideal in projective space $\BP^3$ has dimension zero. This means that we can write this ideal as an intersection of ideals of grade three (primary ideals corresponding to the points in this scheme, counted with multiplicities). Next, we can apply \cite[Proposition 1.2.10 (c)]{BH93} and conclude that the ideal $J = I_1(\partial_1)$ has grade three. 
Since 
\[
J=(xy,\,xz,\,xw,\,T_1,\,T_2),
\]
with $T_1$ and $T_2$ as in Setup \ref{mu7-setup-c}, we see that if we set ${x\ne0}$, then ${y=z=w=0}$, which means that the point $[1:0:0:0]$ is in this scheme. Let us set ${x=0}$. In case IId, we must also have ${w=0}$, because ${T_2=w^2}$ (giving double points). So $T_1$ becomes a quadric in $y$ and $z$, defining two points, counted with multiplicities. In cases IIa, IIb, and IIc, we can consider both cases ${w=0}$ or ${w\ne0}$. If ${w=0}$ then $T_2$ vanishes and $T_1$ becomes again a quadric in $y$ and $z$. If ${w\ne0}$, then either $y$ or $z$ must be zero so that ${T_2=0}$, and $T_1$ becomes again a quadric in two variables. In any case, we see that the scheme is zero\h{dimensional}. A similar argument can be applied to show that $I_2(\partial_3)$ defines a zero\h{dimensional} scheme and therefore has grade three.

By the Acyclicity Criterion \cite[Theorem 1.4.13]{BH93} it follows that both complexes $\bG$  and $\Sigma^{-3}{\bG}^*$ are acyclic. Hence, $J$ is a perfect ideal of grade three.

(b): The surjectivity of  $\varphi$ follows from the description of the ideals $I$ and $J$.
Remark that 
\[\partial_3^*=
\begin{pmatrix}
x&T_1&-T_2&T_3&-T_4&T_5\\
0&0&0&-w&z&-y
\end{pmatrix}.
\]
To show the inclusion $\Ker\varphi\subseteq\im\partial_3^*$, we check that all six components of the composition $\varphi\partial_3^*$ are zero. The first component of the composition is zero as the first component of $\varphi$ is in the ideal $(z,w)$ and $[xz]=0=[xw].$ The second  and third components are zero as $[T_1]=0=[T_2]$.  To show that the rest of the components are zero, we consider each case separately.

\textbf{IIa.} In this case we have
\begin{align*}
\partial_3^*&=\begin{pmatrix}
x&T_1&-T_2&0&y^2&a^{-2}(a+1)yz\\
0&0&0&-w&z&-y
\end{pmatrix} \quad\text{and}\\
    \varphi&=\begin{pmatrix}
      \left[ z^2+(a^2b+ab)zw+a^2bw^2\right]&\left[x^3-y^2z\right]
    \end{pmatrix}.
\end{align*}
Since $[xw]=0=[yw]$, the forth component is: 
\begin{align*}
    -[x^3w]+[y^2zw]=-[x^2][xw]+[yz][yw]=0.
\end{align*}
 The fifth component is:
 \begin{align*}
  &[y^2z^2+(a^2b+ab)y^2zw+ a^2by^2w^2]+[x^3z-y^2z^2]\\
  &=[yw][(a^2b+ab)yz+ a^2byw]+[x^2][xz]\\
  &=0,
 \end{align*}
as $[yw]=0=[xz]$.
The  sixth component is:
\begin{align*}
 &[a^{-2}(a+1)(yz^3+(a^2b+ab)yz^2w+a^2byzw^2)]-[x^3y-y^3z]\\
 &=a^{-2}[yz][a^2y^2+azw+(a+1)z^2]-[x^2][xy]\\
 &\quad-a^{-1}[yw][z^2]+ a^{-2}(a+1)[yw][(a^2b+ab)z^2+a^2bzw]\\
 &=0,
\end{align*}
as $[a^2y^2+azw+(a+1)z^2]=[xy]=[yw]=0$.

\textbf{IIb.} In this case we have
\begin{align*}
\partial_3^*&=\begin{pmatrix}
x&T_1&-T_2&T_1&-yw&0\\
0&0&0&-w&z&-y
\end{pmatrix}\quad\text{and}\\
    \varphi&=\begin{pmatrix}
    [z^2+yw]& [x^3+w^3]
    \end{pmatrix}.
\end{align*}
The forth component  is:
\begin{align*}
    [T_1(z^2+yw)]-[x^3w+w^4]=[T_1][z^2+yw]-[x^2][xw]-[w^4]=0.
\end{align*}
as $[T_1]=[xw]=0$ and $[w^4]=[y^2z^2]=[y^2][z^2]=0.$
The fifth component is:
\begin{align*}
    -[yz^2w+y^2w^2]+[x^3z+zw^3]=-[zw][yz-w^2]-[y^2][w^2]+[x^2][xz]=0,
\end{align*}
as $[y^2]=[xz]=[yz-w^2]=0$. The sixth component is:
\begin{align*}
    -[x^3y+yw^3]=-[x^2][xy]+[yw][yz-w^2]-[y^2][zw]=0,
\end{align*}
$[xy]=[yz-w^2]=[y^2]=0$.

\textbf{IIc.} In this case we have
\begin{align*}
\partial_3^*&=\begin{pmatrix}
x&T_1&-T_2&0&yz&0\\
0&0&0&-w&z&-y
\end{pmatrix}\quad\text{and}\\
    \varphi&=\begin{pmatrix}
    [z^2+w^2]&[x^3-yz^2]
    \end{pmatrix}.
\end{align*}
The forth component is:
\begin{align*}
    -[x^3w-yz^2w]=-[x^2][xw]
    +[yz][zw]=0,
\end{align*}
as $[xw]=0=[zw]$.
The fifth component is: 
\begin{align*}
  [yz^3+yzw^2]+[x^3z-yz^3]=[yw][zw]+[x^2][xz]=0,  
\end{align*} 
as $[xz]=0=[zw]$. 
The sixth component is:
\begin{align*}
[-x^3y+y^2z^2]=-[x^2][xy]+[z^2][y^2+w^2]-[zw]^2=0,
\end{align*}
is zero as $[xy]=[y^2+w^2]=[zw]=0$. 

\textbf{IId.} In this case we have
\begin{align*}
\partial_3^*&=\begin{pmatrix}
x&T_1&-T_2&0&yw&0\\
0&0&0&-w&z&-y
\end{pmatrix}\quad\text{and}\\
    \varphi&=\begin{pmatrix}
    [z^2]&[x^3-yzw]
    \end{pmatrix}.
\end{align*}
The forth component is:
\begin{align*}
  [-x^3w+yzw^2]=-[x^2][xw]+[yz][w^2]=0,  
\end{align*}
 as $[xw]=0=[w^2]$.
 The fifth component is:
 \begin{align*}
     [yz^2w]+[x^3z-yz^2w]=[x^2][xz]=0,
 \end{align*} as $[xz]=0$.
The sixth component is:
\begin{align*}
    [-x^3y+y^2zw]=-[x^2][xy]+[y^2][zw]=0,
\end{align*}
as $[xy]=0=[y^2].$

If $q$ denotes the quadric generator of $I$ that is not in $J$ and $x^3-r$  the cubic generator of $I$, then we may write
\[\varphi =([q],[x^3-r]).\]
To prove the inclusion $\Ker\varphi\subseteq\im\partial_3^*$, we consider $f$ and $g$ homogeneous polynomials in $Q$ such that 
\begin{equation}
\label{qr}
    qf+(x^3-r)g\in J.
\end{equation}
By Lemma \ref{J entries}, if  $f,g\in J$, then  $\bigl(\begin{smallmatrix}f\\g\end{smallmatrix}\bigr)\in\im\partial_3^*.$ 
Moreover, by using the first column of $\partial_3^*$ for $f$ and last three columns of $\partial_3^*$ for $g$, we may assume that $f\in\kk[y,z,w]$ and $g\in\kk[x].$ In particular, there exists $e\in\kk$ such that  $g=ex^n$ for some $n\geq 0.$ 
The inclusion \eqref{qr} gives $ex^{n+3}\in J+(q,r)\subseteq (y,z,w),$ thus $e=0$ and so $g=0$. Hence the inclusion \eqref{qr} becomes $qf\in J$, which by Proposition \ref{colon II} gives $f\in J$.
The desired conclusion now follows.
\end{proof}

\begin{proof}[Proof of Theorem \ref{mu7 structure}.]
Let $J$ be an ideal defined as in Setups \ref{mu7-setup} and \ref{mu7-setup-c}. By Propositions \ref{J res mu7}(a) and \ref{J res mu7-c}(a), $J$  is a grade three perfect of $Q$ such that $Q/J$ has the resolution format $\mff_J=(1,5,6,2)$. 
The graded minimal free resolution of the $Q$-module $Q/J$ has the form:
\[
{\bG}\colon 0\to 
\begin{matrix}Q(-5)\\ \oplus\\ Q(-4)\end{matrix}
\xra{\partial_3}
\begin{matrix}Q(-4)\\ \oplus\\ Q^5(-3)\end{matrix}
\xra{\partial_2}Q^5(-2)\xra{\partial_1}Q,
\]
The dualizing module of the ring $Q/J$, up to isomorphism,  is given by  $\omega_{Q/J}=\Ext{Q}{3}{Q/J}{Q(-4)}=\HH{0}{\Sigma^{-3}\bG^*}(-4)=(\Coker\partial_3^*)(-4)$, see  \cite[Corollary 3.6.12; Example 3.6.15]{BH93}.
In Propositions \ref{J res mu7}(b) and \ref{J res mu7-c}(b), for each pair of ideals $(I,J)$, as in cases I and II respectively, we consider a concrete graded $Q$-homomorphism  $\varphi$ such that the sequence
\[\begin{matrix}Q(-3)\\ \oplus\\ Q^5(-4)\end{matrix}
\xrightarrow{-\partial_3^*}
\begin{matrix}Q(-2)\\ \oplus\\ Q(-3)\end{matrix}
\xrightarrow{\varphi}I/J\to 0
\]
is exact. It induces an isomorphism $\overline \varphi\colon\omega_{Q/J}(-3)\xra{} I/J$. The composition of the natural homogeneous embedding $I/J\hookrightarrow Q/J$ and the map $\overline\varphi$ gives a homogeneous embedding $\iota\colon\omega_{Q/J}(-3)\hookrightarrow  Q/J$. The desired conclusion now follows.\end{proof}

\begin{corollary}
\label{cor: mu7 structure} 
Let $\mathsf{k}$ be an algebraically closed field with $\cha\kk=0$ and $I$ a homogeneous Gorenstein  ideal of the ring  $Q=\kk[x,y,z,w]$, minimally generated by seven elements,  with $(x,y,z,w)^4\subseteq I\subseteq(x,y,z,w)^2$.
Then, a grade minimal free resolution of the $Q$\h{module} $Q/I$ has the form:
  \[
\bF\colon 0\to 
Q(-7)\xra{}
\begin{matrix}
Q(-4)\\
\oplus\\
Q^6(-5)
\end{matrix}
\xra{}
\begin{matrix}
Q^6(-3)\\
\oplus\\
Q^6(-4)
\end{matrix}
\xra{}
\begin{matrix}
Q^6(-2)\\
\oplus\\
Q(-3)
\end{matrix}
\xra{}
Q.
  \]
\end{corollary}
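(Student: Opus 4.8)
The plan is to run the same doubling-and-mapping-cone argument that was used for the six-generator case in Corollary~\ref{cor: Iquadratic}(b). Theorem~\ref{mu7 structure} already exhibits $I$ as a doubling of a grade three perfect ideal $J$ of resolution format $\mff_J=(1,5,6,2)$, via the exact sequence $0\to\omega_{Q/J}(-3)\xra{\iota}Q/J\xra{\pi}Q/I\to 0$; hence by Definition~\ref{def: doubling} and Lemma~\ref{exact F} a free resolution of $Q/I$ is the mapping cone $\bF=\cone\overline\iota$ of a chain map $\overline\iota$ lifting $\iota$, going from the free resolution of $\omega_{Q/J}(-3)$ obtained by dualizing $\bG$ to the minimal free resolution $\bG$ of $Q/J$. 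Everything after that is bookkeeping of graded twists together with a minimality check.

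Concretely, I would first quote Propositions~\ref{J res mu7}(a) and~\ref{J res mu7-c}(a): in every seven-generator case (there are none other than those listed in Proposition~\ref{mu7-9 generators}.I and~\ref{mu7-9 generators}.II), up to a change of variables, $\bG$ has the shape
\[\bG\colon 0\to Q(-5)\oplus Q(-4)\xra{\partial_3}Q(-4)\oplus Q^5(-3)\xra{\partial_2}Q^5(-2)\xra{\partial_1}Q.\]
Since $J$ is perfect of grade three, $\Sigma^{-3}\bG^*$, twisted so as to resolve $\Ext{Q}{3}{Q/J}{Q(-4)}=\omega_{Q/J}$ and then twisted further by $Q(-3)$, is a graded free resolution of $\omega_{Q/J}(-3)\cong I/J$; its last two terms are exactly the exact sequence $Q(-3)\oplus Q^5(-4)\xra{-\partial_3^*}Q(-2)\oplus Q(-3)\xra{\varphi}I/J\to 0$ recorded in Propositions~\ref{J res mu7}(b) and~\ref{J res mu7-c}(b), and it reads
\[0\to Q(-7)\to Q^5(-5)\to Q(-3)\oplus Q^5(-4)\xra{-\partial_3^*}Q(-2)\oplus Q(-3)\to 0.\]
Forming the cone over a chain lift $\overline\iota$ of $\iota$, and using that the $i$-th free module of $\cone\overline\iota$ is the direct sum of the $i$-th free module of $\bG$ and the $(i-1)$-st free module of the resolution of $\omega_{Q/J}(-3)$ displayed above, I would read off $\bF_0=Q$, $\bF_1=Q^6(-2)\oplus Q(-3)$, $\bF_2=Q^6(-3)\oplus Q^6(-4)$, $\bF_3=Q^6(-5)\oplus Q(-4)$, and $\bF_4=Q(-7)$, which is the asserted form.

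The only step that needs a genuine (if short) argument rather than a citation is minimality. The differential of $\cone\overline\iota$ is assembled from the differentials of $\bG$ and of $\Sigma^{-3}\bG^*$ together with the components of $\overline\iota$; the first two families of entries lie in $\fm=(x,y,z,w)$ because $\bG$ is minimal and $\Sigma^{-3}\bG^*$ is the dual of a minimal resolution of a perfect ideal. For $\overline\iota$ one compares twists term by term: each nonzero homogeneous entry of $\overline\iota_i$ has strictly positive degree (degrees $2$ and $3$ in homological degree $0$, degrees $1$ and $2$ in homological degree $1$, and so on), hence lies in $\fm$; in homological degree $0$ this is also immediate since $\overline\iota$ lifts the inclusion $I/J\hookrightarrow Q/J$, whose generators all have degree at least two. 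Therefore all entries of $\partial^{\bF}$ lie in $\fm$ and $\bF$ is a graded minimal free resolution of $Q/I$. I do not expect a real obstacle here; the one thing to be careful with is the twist on $\omega_{Q/J}$, because $Q/J$ is \emph{not} Gorenstein and so formula~\eqref{Gor canonical} does not apply — one must instead read off from the dual of $\bG$ that $\omega_{Q/J}$ is generated in degrees $-1$ and $0$, consistently with~\ref{homogeneous canonical}.
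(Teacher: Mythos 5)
Your proposal is correct and runs the same argument as the paper: invoke Propositions~\ref{J res mu7} and~\ref{J res mu7-c} for the shape of $\bG$ and the presentation of $I/J\cong\omega_{Q/J}(-3)$, lift $\iota$ to a chain map $\overline\iota\colon\Sigma^{-3}\bG^*(-7)\to\bG$, and form the mapping cone, reading off the graded ranks exactly as the paper's diagram does. Your explicit check that the nonzero homogeneous entries of $\overline\iota_i$ have positive degree (so the cone is a \emph{minimal} resolution), and your caveat that one must read the twist off $\partial_3^*$ directly because $Q/J$ is not Gorenstein, are correct and fill in a point the paper leaves implicit.
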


\begin{proof} From Propositions \ref{J res mu7}(a) and \ref{J res mu7-c}(a) we obtain that in each case, I and II respectively, a graded minimal free resolution $\bG$ of the $Q$-module $Q/J$ has the following form:
\[\bG\colon
0\to
\begin{matrix}
Q(-4)\\
\oplus\\
Q(-5)
\end{matrix}
\xra{}
\begin{matrix}
Q^5(-3)\\
\oplus\\
Q(-4)
\end{matrix}
\xra{}
Q^5(-2)\xra{}
Q
\]
The map 
$\varphi\colon Q(-2) \oplus Q(-3)\to I/J$ constructed in Propositions \ref{J res mu7}(b) and \ref{J res mu7-c}(b), in  each one of the cases I and II respectively, gives a homogeneous $Q$-homomorphism  $\iota\colon \omega_{Q/J}(-3)\to Q/J $; see the proof of Theorem \ref{mu7 structure}.
The map $\iota$ extends to a chain homomorphism of graded complexes $\overline\iota$:

\begin{equation*}
 \xymatrixrowsep{2pc}
 \xymatrixcolsep{1pc}
 \xymatrix{
\Sigma^{-3}\bG^*(-7)\colon 
\ar@{->}[d]^{\overline\iota}
&0\ar@{->}[r]
&Q(-7)\ar@{->}[r]\ar@{->}[d]^{\iota_3}
&Q^5(-5)\ar@{->}[r]\ar@{->}[d]^{\iota_2}
&{\begin{matrix}Q(-3)\\ \oplus\\ Q^5(-4)\end{matrix}
}\ar@{->}[r]\ar@{->}[d]^{\iota_1}
&{\begin{matrix}Q(-2)\\ \oplus\\ Q(-3)\end{matrix}}
\ar@{->}[d]^{\iota_0}
\\
\bG\colon
&0\ar@{->}[r]
&{\begin{matrix}Q(-4)\\ \oplus\\ Q(-5)\end{matrix}}
\ar@{->}[r]
&{\begin{matrix}Q^5(-3)\\ \oplus\\ Q(-4) \end{matrix}}
\ar@{->}[r]
&Q^5(-2)\ar@{->}[r] 
&Q
}
\end{equation*}
Set $\bF\colon =\cone\overline\iota$. By Lemma \ref{exact F}, $\bF$ is a graded  minimal free resolution of the $Q$-module $Q/I$ of the desired form.
\end{proof}

\section{Structure of Gorenstein ideals with 9 generators}
\label{Structure9gens}

The main result of this section is the following.
 \begin{theorem} 
\label{mu9 structure}
Let $\mathsf{k}$ be an algebraically closed field with $\cha\kk=0$ and $I$ a homogeneous Gorenstein  ideal of the ring  $Q=\kk[x,y,z,w]$, minimally generated by nine elements,  with $(x,y,z,w)^4\subseteq I\subseteq(x,y,z,w)^2$.
Then, there exists a grade three homogeneous perfect ideal $J$ of resolution format $(1,6,8,3)$, such that  $J\subset I$, and a homogeneous embedding $\iota$ of $\omega_{Q/J}(-3)$ into $Q/J$ such that the following sequence is exact:
\[0\rightarrow \omega_{Q/J}(-3)\xra{\iota} Q/J\xra{\pi} Q/I\rightarrow 0,\]
where $\pi$ is the canonical projection.
\end{theorem}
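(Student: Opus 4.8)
The plan is to run the same strategy as in Section~\ref{Structure7gens}, with Brown's format-$(1,5,6,2)$ structure theory replaced by the Eagon--Northcott resolution of a $2\times4$ matrix. By Proposition~\ref{mu7-9 generators}, parts~III and~IV, every nine-generator ideal $I$ with $(x,y,z,w)^4\subseteq I\subseteq(x,y,z,w)^2$ is, after a linear change of variables, one of the explicitly listed ideals, and each of them has exactly six quadratic and three cubic minimal generators. In every case I take $J$ to be the ideal generated by the six quadratic minimal generators of $I$; then $J\subset I$ automatically, and $I/J$ is generated by the classes of the three cubics, all of degree three.

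\emph{Resolution of $Q/J$.} First I would show --- through Setup/Proposition pairs analogous to Setup~\ref{mu7-setup}/Proposition~\ref{J res mu7} and Setup~\ref{mu7-setup-c}/Proposition~\ref{J res mu7-c}, the connected-sum families~IV being treated separately because there $x^3$ enters the cubics --- that in each case the six quadrics generating $J$ are precisely the $2\times2$ minors of a suitable $2\times4$ matrix $M$ of linear forms over $Q$. The associated Eagon--Northcott complex then has the form
\[
\bG\colon 0\to Q^3(-4)\xra{\partial_3} Q^8(-3)\xra{\partial_2} Q^6(-2)\xra{\partial_1} Q,
\]
so $\mff_J=(1,6,8,3)$ as soon as $\grade{}{I_2(M)}=3$, which I would verify as in the proof of Proposition~\ref{J res mu7-c}(a): either exhibit an explicit length-three regular sequence inside $J=I_2(M)$, or show that $V(J)\subseteq\BP^3$ is zero-dimensional and invoke \cite[Proposition~1.2.10(c)]{BH93}. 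By the Acyclicity Criterion \cite[Theorem~1.4.13]{BH93} the complex $\bG$ is then a graded minimal free resolution of $Q/J$, so $J$ is grade three perfect of the claimed format, and $\omega_{Q/J}=\Ext{Q}{3}{Q/J}{Q(-4)}=(\Coker\partial_3^{\,*})(-4)$; in particular $\omega_{Q/J}(-3)$ has the presentation $Q^8(-4)\xra{\partial_3^{\,*}}Q^3(-3)\to\omega_{Q/J}(-3)\to0$.

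\emph{Presenting $I/J$.} Write $I=J+(c_1,c_2,c_3)$ with the $c_i$ the three cubic generators and let $\varphi\colon Q(-3)^3\to I/J$ be the surjection onto $[c_1],[c_2],[c_3]$. The heart of the argument is to prove $\Ker\varphi=\im(\partial_3^{\,*})$, which then gives $I/J\cong\Coker\bigl(\partial_3^{\,*}\colon Q^8(-4)\to Q^3(-3)\bigr)\cong\omega_{Q/J}(-3)$. The inclusion $\im\partial_3^{\,*}\subseteq\Ker\varphi$ is a finite computation: pairing each column of $\partial_3^{\,*}$ against $(c_1,c_2,c_3)$ gives an element of $J$, using the Eagon--Northcott syzygies among the minors together with the explicit expressions of the $c_i$ in the variables (the nine-generator analogues of the identities \eqref{iden1}, \eqref{iden2}, \eqref{iden II}). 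For the reverse inclusion, following Propositions~\ref{J res mu7}(b) and~\ref{J res mu7-c}(b): given $(f_1,f_2,f_3)$ with $\sum f_ic_i\in J$, use column operations by $\partial_3^{\,*}$ to reduce each $f_i$ modulo $J$ into a small subring of $Q$, then use degree and support arguments to discard all but the diagonal contributions, and finally use a computation of the colon ideals $J:(c_i)$ (the counterpart of the colon lemmas invoked in Section~\ref{Structure7gens}) to force the remaining $f_i$ into $J$; Lemma~\ref{J entries} then yields $(f_1,f_2,f_3)\in\im\partial_3^{\,*}$.

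\emph{Conclusion and main obstacle.} With the two steps above in place the proof concludes exactly as that of Theorem~\ref{mu7 structure}: the exact sequence $Q^8(-4)\xra{-\partial_3^{\,*}}Q^3(-3)\xra{\varphi}I/J\to0$ and the presentation of $\omega_{Q/J}$ give an isomorphism $\overline\varphi\colon\omega_{Q/J}(-3)\xra{\ \sim\ }I/J$; composing with the inclusion $I/J\hookrightarrow Q/J$ produces the homogeneous embedding $\iota$, and since $\Coker\iota\cong(Q/J)/(I/J)\cong Q/I$ the sequence $0\to\omega_{Q/J}(-3)\xra{\iota}Q/J\xra{\pi}Q/I\to0$ is exact; one also checks that $\bF=\cone\overline\iota$ has all differential entries in $(x,y,z,w)$, so that it is a minimal resolution of $Q/I$ and reproduces the third Betti table displayed in the Introduction (Corollary~\ref{cor: mu9 structure}). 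I expect the main obstacle to be the second step: producing, in each of the families III\,a--d and IV\,a--c, the correct $2\times4$ matrix $M$ --- equivalently, the basis of cubics making $\varphi\partial_3^{\,*}=0$ transparent --- and computing the colon ideals $J:(c_i)$ cleanly, with the connected-sum families the most delicate since there, as in Setup~\ref{mu7-setup-c}, a modified matrix together with separate grade and colon arguments is likely required.
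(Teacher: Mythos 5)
Your proposal is correct and takes essentially the same approach as the paper: in each of the cases III\,a--d and IV\,a--c of Proposition~\ref{mu7-9 generators}, $J$ is taken to be the ideal of the six quadratic generators of $I$, realized as $I_2(M)$ for an explicit $2\times4$ matrix of linear forms (Setups~\ref{mu9-setup} and~\ref{mu9-setup-c}), and the Eagon--Northcott resolution of format $(1,6,8,3)$ together with the exact sequence $Q^8(-4)\xra{-\partial_3^*}Q^3(-3)\xra{\varphi}I/J\to0$ are established via the Acyclicity Criterion and the colon-ideal computations of the Appendix (Propositions~\ref{colon JIII} and~\ref{colon JIV}). The theorem is then deduced exactly as in the proof of Theorem~\ref{mu7 structure}, as your last paragraph anticipates.
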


The proof of this theorem, given at the end of this section, uses the description given in Proposition \ref{mu7-9 generators} of all possible ideals $I$ minimally generated by nine elements, up to a linear change of variables, and Propositions \ref{J res mu9} and \ref{J res mu9-c}.

\begin{setup}
  \label{mu9-setup}  We consider the ideals $I$ as in Proposition \ref{mu7-9 generators}{III}. First, we identify an ideal $J$ of $Q$ such that $J\subset I$, for each one of the cases {IIIa-d}.
Second, we use a terminology due to Eagon-Northcott \cite{EN62}, that  gives a uniform description of the ideals $J$ that satisfy the conditions from Theorem \ref{mu9 structure}; see  Proposition \ref{J res mu9}.
  
  \begin{enumerate}[\bf a.]
  \item[\bf IIIa.] We may write
  \begin{flalign*}
\hspace{0.5cm}
      I&=J+ (y^2z,\, y^3,\, w^3),\ \text{where}&\\
      J&=(x^2,\, xz,\, xw,\, xy+yz-zw,\, yw-w^2,\, z^2).&
  \end{flalign*}

\item[\bf IIIb.] Let $a,b\in\kk$. We may write 
\begin{flalign*}
\hspace{0.5cm}
    I&=J+(xy^2-w^3,\, y^3,\, y^2z),\ \text{where}&\\
    J&=\bigl(x^2,\, xz,\, xw,\, xy-z^2,\, az^2-zw,\, ayz-yw+b(a^2-b)z^2+(b-a^2)w^2\bigr)&
\end{flalign*}
  \item[\bf IIIc.] Let $a\in\kk$.
  \begin{flalign*}
\hspace{0.5cm}
  I&=J+(y^3,\, y^2w,\, yw^2-aw^3),\ \text{where}&\\
  J&=(x^2,\,xz,\, xw,\, xy-zw,\, z^2,\, axy+yz-w^2).&
  \end{flalign*}
  \item[\bf IIId.]\textbf{Two-two connected sum.} We may write:
\begin{flalign*}
\hspace{0.5cm}
I&=J+(y^3,\,xy^2-zw^2,\,w^3),\ \text{where}&\\ 
J&=(x^2,\, xz,\, xw,\,yz,\,yw,\,z^2).&    
\end{flalign*}
  \end{enumerate}
If we consider, 
  \begin{equation*}
    \arraycolsep=5pt\def\arraystretch{1.5}
    \begin{array}{c|ccc}
      \text{\scriptsize Case} &A&B&C\\
    \hline
      \text{\scriptsize \textbf{IIIa}}&y-w&-y+w&-z\\
      \text{\scriptsize \textbf{IIIb}}
       &z&y+(a^2-b)w&ay+b(a^2-b)z\\
      \text{\scriptsize \bf IIIc}
       &z&w&y+aw\\
       \text{\scriptsize \bf IIId}
       &y&0&z\\
    \end{array}
  \end{equation*}
then, 
\begin{equation*}
J=(x^2, xz, xw, Az-Bx, Aw-Cx, Bw-Cz),
\end{equation*} i.e. $J$ is generated by the $2\times 2$ minors of the  matrix 
\begin{equation*}
\label{matrixMsetup6.2}
M=\begin{pmatrix}x&A&B&C\\0&x&z&w\end{pmatrix}.
\end{equation*}
\end{setup}

Recall that  $[-]$ denotes the operation of taking modulo $J$ of an element in $Q$. Based on a description from \cite{EN62} we have the following result:

\begin{proposition}
\label{J res mu9}
Adopt Setup  \ref{mu9-setup}. The following assertions hold.
\begin{enumerate}[$(a)$]
\item The ideal $J$ is a grade three perfect homogeneous  ideal with minimal free resolution of $Q/J$ over $Q$ given by 
\[
{\bG}\colon 0\to Q^3(-4)\xra{\partial_3}Q^8(-3)\xra{\partial_2}Q^6(-2)\xra{\partial_1}Q,
\]
where
\begin{align*}
\partial_3&=
\begin{pmatrix}
     C&w&0\\
     0&C&w\\
     -B&-z&0\\
     0&-B&-z\\
     A&x&0\\
     0&A&x\\
     -x&0&0\\
     0&-x&0
\end{pmatrix},&
\partial_2&=
\begin{pmatrix}
     -B&-z&-C&-w&0&0&0&0\\
     A&x&0&0&-C&-w&0&0\\
     -x&0&0&0&0&0&-C&-w\\
     0&0&A&x&B&z&0&0\\
     0&0&-x&0&0&0&B&z\\
     0&0&0&0&-x&0&-A&-x
\end{pmatrix},\\
&\hspace{2cm}\text{and}&\partial_1&=
\begin{pmatrix}
     x^2&xz&zA-xB&xw&wA-xC&wB-zC
\end{pmatrix}.
\end{align*}
\item The sequence  $Q^8(-4)\xrightarrow{-\partial_3^*}Q^3(-3)\xrightarrow{\varphi}I/J\to 0$ is exact, where\\

\[
\varphi=
\begin{cases}
\begin{pmatrix}
[w^3]&[y^2z]&[y^3-w^3]
\end{pmatrix},
&\ \text{\!\negthickspace\negthickspace\negthickspace\negthickspace if {IIIa}}\\  
\begin{pmatrix}
[w^3-xy^2]&[ac(w^3-xy^2)+y^2z]&[bc^2(w^3-xy^2)-y^3]
\end{pmatrix}
&\ \text{\!\negthickspace\negthickspace\negthickspace\negthickspace if {IIIb}}\\
\begin{pmatrix}[yw^2-aw^3]&[-y^2w]&[ay^2w+y^3]\end{pmatrix},
&\ \text{\!\negthickspace\negthickspace\negthickspace\negthickspace if {IIIc}}\\
\begin{pmatrix}[w^3]&[xy^2-zw^2]&[-y^3]\end{pmatrix},
&\ \text{\!\negthickspace\negthickspace\negthickspace\negthickspace if {IIId}}
\end{cases}
\]
where $c=a^2-b$ in case IIIb.
\end{enumerate}
\end{proposition}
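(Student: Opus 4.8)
The plan is to follow the template established in the proofs of Propositions~\ref{J res mu7} and~\ref{J res mu7-c}, adapted to the Eagon--Northcott setting. For part~(a), I would first observe that $\partial_1$ lists exactly the six $2\times2$ minors of the matrix $M=\bigl(\begin{smallmatrix}x&A&B&C\\0&x&z&w\end{smallmatrix}\bigr)$ (with appropriate signs), so $J=\operatorname{Coker}\partial_1=I_2(M)$ and $\bG$ is, by construction, the Eagon--Northcott complex $\mathcal{EN}(M)$ attached to $M$; see \cite{EN62}. The identities $\partial_1\partial_2=0$ and $\partial_2\partial_3=0$ are then the standard Eagon--Northcott relations and can be checked directly from the displayed matrices, which also gives $\partial_2^*\partial_1^*=0$ and $\partial_3^*\partial_2^*=0$, so both $\bG$ and $\Sigma^{-3}\bG^*$ are complexes. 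To establish acyclicity, I would invoke the Acyclicity Criterion \cite[Theorem~1.4.13]{BH93}: it suffices to show $\operatorname{grade} I_1(\partial_1)\ge3$, $\operatorname{grade} I_6(\partial_2)\ge 2$, and $\operatorname{grade} I_3(\partial_3)\ge1$ — equivalently, following the argument in Proposition~\ref{J res mu7-c}(a), that the scheme cut out by $J$ in $\BP^3$ is zero-dimensional in each of the four cases IIIa--IIId. Concretely: setting $x\ne0$ forces $y=z=w=0$ (giving the point $[1:0:0:0]$), while setting $x=0$ reduces $J$ to the $2\times 2$ minors of $\bigl(\begin{smallmatrix}A&B&C\\ x&z&w\end{smallmatrix}\bigr)$ with $x=0$, i.e.\ to $Bw-Cz$, $Aw-Cx$, $Az-Bx$ evaluated at $x=0$, which together with the explicit forms of $A,B,C$ in each case pin down finitely many points; a short case check (or a direct grade computation exhibiting an explicit length-three regular sequence inside $J$, as done in the mu7 proofs) finishes this. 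Since $J$ is then grade-three perfect and $\bG$ has all differentials with entries in $(x,y,z,w)$, it is a graded minimal free resolution with the stated format $(1,6,8,3)$.

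For part~(b), by Lemma~\ref{J entries}, every vector in $G_3^*=Q^3$ with all entries in $J$ lies in $\operatorname{im}(\partial_3^{\bG})^*$, so the exactness of $Q^8(-4)\xrightarrow{-\partial_3^*}Q^3(-3)\xrightarrow{\varphi}I/J\to 0$ reduces to two things. First, surjectivity of $\varphi$: this is immediate from the displayed presentations $I=J+(\,\cdot\,,\,\cdot\,,\,\cdot\,)$ in Setup~\ref{mu9-setup}, since in each case the three entries of $\varphi$ differ from the three extra generators of $I$ only by elements of $J$ (for instance in IIIb one uses $[w^3-xy^2]$ in place of $xy^2-w^3$, and the combinations $ac$, $bc^2$ with $c=a^2-b$ are chosen precisely so the other two entries represent $y^2z$ and $y^3$ modulo $J$). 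Second, the inclusion $\operatorname{im}\partial_3^*\subseteq\operatorname{Ker}\varphi$, i.e.\ $\varphi\partial_3^*=0$: one writes out $\partial_3^*$ as an explicit $3\times 8$ matrix over $Q$ and checks that each of the eight coordinates of the composition $\varphi\partial_3^*$ vanishes modulo $J$, using repeatedly that $[x^2]=[xz]=[xw]=0$ together with the remaining two generators $[\,Az-Bx\,]$-type relations and the identity relating $[A\cdot(\text{something})]$ to $[x\cdot(\text{something})]$ in $Q/J$ — this is the tedious-but-routine analogue of the component-by-component verification carried out in the proofs of Propositions~\ref{J res mu7}(b) and~\ref{J res mu7-c}(b), and I would organize it case by case (IIIa, IIIb, IIIc, IIId).

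The reverse inclusion $\operatorname{Ker}\varphi\subseteq\operatorname{im}\partial_3^*$ is where the real work lies and would be handled exactly as in the mu7 proofs: take homogeneous $f_1,f_2,f_3\in Q$ with $\varphi\bigl(\begin{smallmatrix}f_1\\f_2\\f_3\end{smallmatrix}\bigr)\in J$; by Lemma~\ref{J entries} it is enough to reduce to $f_1,f_2,f_3\in J$, and by subtracting off columns of $\partial_3^*$ one may normalize the $f_i$ to lie in small polynomial subrings (e.g.\ $\kk[y,z,w]$ or $\kk[y]$), so that the only surviving contribution is a pure power of $y$ times one of the cubic generators; comparing leading terms modulo $(x,z,w)$ or modulo $J$ forces that coefficient to vanish, leaving a membership statement $f\in J:(q)$ for the quadric $q$, which is resolved by the colon-ideal computation $J:(q)=J+(x)$ (the nine-generator analogue of Propositions~\ref{J colon} and~\ref{colon II}, which I would state and prove separately as referenced). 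I expect the main obstacle to be precisely this colon-ideal / reduction step in each of the four cases, since the extra generators now include \emph{two} cubics beyond a single quadric (unlike the mu7 situation), so one must be careful that the normalization of $(f_1,f_2,f_3)$ via the columns of $\partial_3^*$ genuinely exhausts the redundancy; once the relevant colon ideals are computed, the argument closes as before.
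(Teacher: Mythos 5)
The outline is sound — you correctly identify the Eagon--Northcott structure of $\bG$, the use of the Acyclicity Criterion, the role of Lemma~\ref{J entries}, and the normalize-by-columns strategy for $\Ker\varphi\subseteq\im\partial_3^*$ — but several concrete details are wrong in ways that would derail the proof as written.

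First, the stated grade bounds for the Acyclicity Criterion are incorrect. For the complex $0\to Q^3\to Q^8\to Q^6\to Q$ the expected ranks are $r_3=3$, $r_2=8-3=5$, $r_1=6-5=1$. Acyclicity of $\bG$ itself requires $\operatorname{grade}I_3(\partial_3)\ge 3$, $\operatorname{grade}I_5(\partial_2)\ge 2$, $\operatorname{grade}I_1(\partial_1)\ge 1$, and acyclicity of $\Sigma^{-3}\bG^*$ upgrades the last to $\operatorname{grade}I_1(\partial_1)\ge 3$. You write $I_6(\partial_2)$ (wrong ideal of minors; $\partial_2$ is $6\times 8$ with expected rank $5$, not $6$) and $\operatorname{grade}I_3(\partial_3)\ge1$ (too weak: without $\ge3$ here $\bG$ need not be acyclic and $J$ need not be perfect). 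The paper in fact exhibits explicit regular sequences inside $I_5(\partial_2)$ and $I_3(\partial_3)$ — found by a Macaulay2 computation — and proves $\operatorname{grade}J\ge3$ by the zero-dimensional scheme argument you gesture at.

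Second, your picture of the reverse inclusion step does not match the nine-generator situation. In this proposition $\varphi\colon Q^3(-3)\to I/J$, so the three extra generators of $I$ beyond $J$ are all cubics; there is no distinguished quadric $q$, and the final membership statement is not of the form $f\in J\colon(q)=J+(x)$. The colon computations that actually close the argument are case-dependent and provided by Proposition~\ref{colon JIII}: $J\colon(y)=J$ in IIIa; $J\colon(z^2)=(x,z,w)$ and $J\colon(w^3-xy^2)=J+(x,z)$ in IIIb; $J\colon(w)=J+(x)$ in IIIc; and in IIId one simply uses that $J$ is a monomial ideal. The normalization via columns of $\partial_3^*$ proceeds as you expect (kill a pure $y$-power factor using $J\subseteq(x,z,w)$, restrict the remaining entries to small subrings), but the colon ideals you need to finish are of the type $J\colon(\text{monomial or cubic})$ rather than $J\colon(\text{quadric})$, so the ``nine-generator analogue of Propositions~\ref{J colon} and~\ref{colon II}'' you refer to would not look like what you describe.
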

\begin{proof}
(a): It is clear that $J=\Coker\partial_1$.  Remark that $\bG$ is the Eagon-Northcott complex, see \cite{EN62},  associated with the map given by the matrix $M$ in Setup \ref{mu9-setup}. 
Clearly, we have $\partial_1\partial_2=0$ and    $\partial_2\partial_3=0$, thus $\partial_2^*\partial_1^*=0$ and $\partial_3^*\partial_2^*=0$. Therefore, both $\bG$ and $\Sigma^{-3}\bG^*$ are complexes.
    
By a Macaulay2 calculation \cite{GS} one can check that the sequences: $\{x^5,z^5\}$ in cases IIIa,c,d and $\{x^5,az^5-z^4w\}$ in case IIIb are in the ideal $I_5(\partial_2)$. It is easy to check that they are both regular sequences.  Thus, $\grade{}{I_5(\partial_2)}\geq 2$.
Similarly, also by Macaulay2 calculation, the sequences: $\{x^3,\,z^3,\,yw^2-w^3\}$ in case IIIa, $\{x^3,\,z^3,\,a^2w^3-ayzw-bw^3+yw^2\}$ in case IIIb,
$\{x^3,\,z^3,\,xy^2-w^3\}$ in case IIIc,
and $\{x^3,\,z^3,\,yw^2\}$ in case IIId  belong to the ideal $I_3(\partial_3)$.  It is easy to check that they are all regular sequences.
Thus, $\grade{}{I_3(\partial_3)}\geq 3$.

Finally, we can use a similar argument to the one we use in the proof of Proposition \ref{J res mu7-c} to show that $J$ defines a zero\h{dimensional} scheme, and therefore $J=I_1(\partial_1)$ has grade three. It is immediate from the generators that this scheme includes the point $[0:1:0:0]$, so it is not empty. On the other hand, we know that $x$ vanishes, so we must also have ${zA=wA=wB=0}$. We can see that in each case this scheme has a finite number of points, so it is zero\h{dimensional}.

By the Acyclicity Criterion \cite[Theorem 1.4.13]{BH93} it follows that both complexes $\bG$  and $\Sigma^{-3}\bG^*$ are acyclic. Thus,  $J$ is a grade three perfect ideal.

(b):  It is clear that $\varphi$ is a surjective homomorphism. 
Considering each case separately, we prove the inclusion $\im\partial_3^*\subseteq\Ker\varphi$, i.e.\ we show that each one of the eight components of the composition $\varphi\partial_3^*$  is zero modulo $J$. To prove the other inclusion $\Ker\varphi\subseteq\im\partial_3^*,$ we consider a homogeneous element of $\Ker\varphi$, and prove that it is in the image of $\partial_3^*$.

By Lemma \ref{J entries} this holds if all the components of this element are in  the ideal $J$.

   \textbf{IIIa.}  In this case we have
   \begin{align*}
       \partial_3^*&=
   \begin{pmatrix}
    -z&0&y-w&0&y-w&0&-x&0\\
    w&-z&-z&y-w&x&y-w&0&-x\\
    0&w&0&-z&0&x&0&0
    \end{pmatrix}\quad\text{and}\\
    \varphi&=\begin{pmatrix}
[w^3]&[y^2z]&[y^3-w^3]
\end{pmatrix}.
   \end{align*}
   First, we prove that all the eight components of  $\varphi\partial_3^*$  are zero modulo $J$.
 The first component  is: 
 \[[-zw^3+y^2zw]=-[zw(w^2-y^2)]=[z(w+y)][yw-w^2]=0,\] 
 as $[yw-w^2]=0$. The second and third components are respectively:
 \begin{align*}
    [-y^2z^2+(yw-w^2)(y^2+yw+w^2)]&=0\quad\text{and}&
    [(yw-w^2)w^2-y^2z^2]&=0,
 \end{align*}
 as $[yw-w^2]=[z^2]=0$.
The fourth component is: 
 \begin{align*}
[y^3z-y^2zw-y^3z+zw^3]=-[z(y+w)][yw-w^2]=0,
   \end{align*} 
as $[yw-w^2]=0$.
The fifth component is: \[[w^2(yw-w^2)+xy^2z]=0,\]
as $[yw-w^2]=[xz]=0$.
The sixth component is:
\[[y^3z-y^2zw+xy^3-xw^3]=[y^2][xy+yz-zw]-[xw][w^2]=0,\]
as  $[xy+yz-zw]=[xw]=0$.
Finally, the seventh and eight components are zero as $[x^2]=[xz]=0.$ 

Next, to prove the inclusion $\Ker\varphi\subseteq\im\partial_3^*$    
we consider homogeneous polynomials $f,g,h$ in $Q$ of the same degree $n$ such that
\begin{equation*}
    w^3f+y^2zg+(y^3-w^3)h\in J.
\end{equation*}
Using columns two, four, and six of  $\partial_3^*$, we may assume $h\in\kk[y]$. Thus, from the inclusion  $J\subseteq(x,z,w)$
it follows $y^3h\in(x,z,w)$, so $h=0$. Using columns one, three, and eight of $\partial_3^*$, we may assume $g\in\kk[y]$ and using the columns five, seven, and eight, we may assume $f\in\kk[y,z]$. 
The equality $[w^2]=[yw]$ implies $[w^3]=[y^2w]$, thus 
\[[w^3f+y^2zg] = [y^2wf+y^2zg] = [y^2][wf+zg] = 0.\]

By Proposition \ref{colon JIII}(a) we have $J\colon(y)=J$, so  we get $[wf+zg] = 0$, and in particular ${n\ge1}$ because $J$ does not have any linear element. Moreover, since $[z^2]=0$, we may write $f=ay^n+by^{n-1}z$ for some $a,b\in\kk$. We may also write $g=cy^n$ for some $c\in\kk$. Thus, we have
\begin{align*}
   0=[wf+zg]&=[y^{n-1}][ayw+bzw+cyz]\\
          &=[y^{n-1}][ayw+b(xy+yz)+cyz]\\
          &=[y^n][aw+bx+(b+c)z]. 
\end{align*}
By Proposition \ref{colon JIII}(a) we get $aw+bx+(b+c)z\in J$ and as above this implies $a=b=c=0$, hence $f=g=0$. 

\textbf{IIIb.} If we set $c=a^2-b$, we have: 
   \begin{align*}
  \partial_3^*&=
   \begin{pmatrix}
      ay+bcz& 0&  -y-cw& 0& z& 0& -x& 0\\
       w&ay+bcz& -z& -y-cw& x& z& 0& -x\\
       0&  w& 0& -z&0& x& 0& 0
    \end{pmatrix}\quad\text{and}\\
    \varphi&=\begin{pmatrix}
[w^3-xy^2]&[ac(w^3-xy^2)+y^2z]&[bc^2(w^3-xy^2)-y^3]
\end{pmatrix}.
 \end{align*}
  First, we prove that all eight components of $\varphi\partial_3^*$  are zero modulo $J$.
By the definition of $J$  and Proposition \ref{colon JIII}(b) we have the following equalities in $Q/J$:
\begin{align*}
[(w^3-xy^2)z]&=0&[(w^3-xy^2)(y+cw)]&=[-xy^3]&[xy]&=[z^2]\\
[zw^2]&=0&[az^2-zw]&=0&[xz]&=0\\
&&[y^2w^2-a^2y^2z^2-c^2w^4]&=0&[yw^3]&=-c[w^4].
\end{align*}
We will use them in the computations that follow.
The first component is:
\begin{flalign*}
\hspace{0.5cm}
&[(w^3-xy^2)(ay+bcz)+(w^3-xy^2)acw +y^2zw]&\\
&=[a(w^3-xy^2)(y+cw)+y^2zw]=[-axy^3+y^2zw]&\\
&=[-ay^2z^2+y^2zw]=-[az^2-zw][y^2]=0.&
\end{flalign*}
The second component is: 
\begin{flalign*}
\hspace{0.5cm}
&\big[\big(ac(w^3-xy^2)+y^2z\big) (ay+bcz)+\big(bc^2(w^3-xy^2)-y^3\big)w\big]&\\
&=[a^2cyw^3-a^2cxy^3+ay^3z+bcy^2z^2+bc^2w^4-y^3w]&\\
&=[y^2][ayz-yw+bcz^2-cw^2]+[a^2cyw^3-a^2cxy^3+bc^2w^4+cy^2w^2]\\
&=[a^2cyw^3-a^2cy^2z^2+bc^2w^4+cy^2w^2]\\
&=c[y^2w^2-a^2y^2z^2-c^2w^4]+[a^2cyw^3+bc^2w^4+c^3w^4]\\
&=[-a^2c^2w^4+c^3w^4+bc^2w^4]\\
&=-c^2(a^2-b-c)[w^4]=0.&
\end{flalign*}
Indeed, the  first equality uses that  $[zw^2]=[xz]=[xw]=0$. The third equality uses $[ayz-yw+bcz^2-cw^2]=0$ and $[xy^3]=[y^2z^2]$.
The fifth equality uses $[y^2w^2-a^2y^2z^2-c^2w^4]=0$. The sixth equality uses $[yw^3]=-c[w^4]$,  and the last equality  uses $c=a^2-b$.  The third component is: 
\begin{flalign*}
\hspace{0.5cm}
&-[(w^3-xy^2)(y+cw)]-[acz(w^3-xy^2)+y^2z^2]&\\
&=[xy^3-y^2z^2]=[y^2][xy-z^2]=0.&
  \end{flalign*}
Indeed, the first equality uses  $-[(w^3-xy^2)](y+cw)=[xy^3]$ and $[z(w^3-xy^2)]=0$. The third equality uses $[xy-z^2]=0$.
The fourth component is: 
\begin{flalign*}
\hspace{0.5cm}
&-[\big(ac(w^3-xy^2)+y^2z\big)(y+cw)-bc^2z(w^3-xy^2)+y^3z]&\\
&=[acxy^3-y^3z-cy^2zw+y^3z]=[cy^2][a(xy-z^2)+(az^2-zw)]=0.&
  \end{flalign*}
  The first equality uses $-[(w^3-xy^2)(y+cw)]=[xy^3]$ and $[z(w^3-xy^2)]=0$. The third equality uses $[xy-z^2]=[az^2-zw]=0$. The fifth component is:
\begin{flalign*}
\hspace{0.5cm}
&[z(w^3-xy^2)+acx(w^3-xy^2)+xy^2z]&\\
&=[z(w^3-xy^2)]+ac[x(w^3-xy^2)]+[xz][y^2]=0.&
\end{flalign*}
The sixth component is: 
\begin{flalign*}
\hspace{0.5cm}
 &[acz(w^3-xy^2)+y^2z^2+bc^2x(w^3-xy^2)-xy^3]=-[y^2][xy-z^2]=0.&
\end{flalign*}
The seventh and the eight components are zero as  $[x(w^3-xy^2)]=[xz]=0.$ 
 
Next, to prove $\Ker\varphi\subseteq\im\partial_3^*$, 
we consider homogeneous polynomials $f,g,h$ in $Q$ of the same degree such that
\[
(w^3-xy^2)f+\big(ac(w^3-xy^2)+y^2z\big)g+ \big(bc(w^3-xy^2)-y^3\big)h\in J.
\]
By using the columns two, four, and six of $\partial_3^*$, we  reduce to the case $h\in\kk[y]$ and by using combinations of columns one, three, four, eight of $\partial_3^*$ we  reduce to the case $g=0$.
The inclusion
\begin{equation*}
\label{in ker 1}
    (w^3-xy^2) f+(bc(w^3-xy^2)-y^3)h\in J\subseteq(x,z,w)
\end{equation*}
implies that $y^3h\in (x,z,w)$, so $h=0$. By using columns five, seven, and eight of $\partial_3^*$ we reduce to the case $f\in\kk[y,w]$. The inclusion above becomes $(w^3-xy^2)f \in J$. By Proposition \ref{colon JIII}(b) we have $J\colon (w^3-xy^2)=J+(x,z)$, thus  $f\in J+(x,z)$. Since $f\in\kk[y,w]$ we get $f\in J,$ which completes the proof. 

\textbf{IIIc.}  In this case we have
\begin{align*}
   \partial_3^*&=\begin{pmatrix}
      y+aw& 0&  -w& 0& z& 0& -x& 0\\
       w&y+aw& -z& -w& x& z& 0& -x\\
       0&  w& 0& -z&0& x& 0& 0
    \end{pmatrix}\quad\text{and} \\
    \varphi &= \begin{pmatrix}[yw^2-aw^3]&[-y^2w]&[ay^2w+y^3]
    \end{pmatrix}.
    \end{align*}
First, we prove that all eight components of $\varphi\partial_3^*$  are zero modulo $J$. By the definition of $J$ and  Proposition \ref{colon JIII}(c) we have the following equalities in $Q/J:$
\begin{align*}
   [xy]&=[zw]& [w^2-axy-yz]&=0&[xw]&=0\\
   [zw^2]&=0& [(-yz+w^2)w]&=0&[w^4]&=0.
\end{align*}
We will use them in the computations that follow.  The first component is: \[[y^2w^2-ayw^3+ayw^3-a^2w^4-y^2w^2]=-a^2[w^4]=0.\]
The second component is: 
\[[-y^3w-ay^2w^2+ay^2w^2+y^3w]=0.\]
The third component is: 
\[[-yw^3+aw^4+y^2zw]=[-yw(-yz+w^2)]+a[w^4]=0.\]
The fourth component is: \[[y^2w^2-ay^2zw-zy^3]=[y^2][w^2-axy-yz]=0.\]
The fifth component is: 
\[[-yzw^2-azw^3-xy^2w]=-[y+aw][zw^2]-[y^2][xw]=0.\]
The sixth component is: 
\[[-y^2zw+axy^2w+xy^3]=[y^2][xy-zw]+[ay^2][xw]=0.\]
The seventh component is:  
\[[xyw^2-axw^3]=[xw][yw-aw^2]=0.\]
The eight component is: 
\[[xy^2w]=[y^2][xw]=0.\]

Next, to prove $\Ker\varphi\subseteq\im\partial_3^*$, we consider homogeneous polynomials $f,g,h$ in $Q$ of the same degree such that
\[(yw^2-aw^3)f+(-y^2w)g+(ay^2w+y^3)h\in J.\]
 By using the columns two, four, and six of $\partial_3^*$, we  reduce to the case $h\in\kk[y]$ and by using combinations of columns one, two, three, and eight of $\partial_3^*$ we reduce to the case $g=0$.
 By using the columns five, seven, and eight of $\partial_3^*$, we reduce to the case $f\in\kk[y,w].$ The inclusion
   \[(yw^2-aw^3)f+(ay^2w+y^3)h\in J\subseteq(x,z,w)\] implies that $h\in(x,y,z),$ hence $h=0$ and $(yw^2-aw^3)f\in J.$ By Proposition \ref{colon JIII}(c) we have $J\colon(w)=J+(x)$. Thus, since $f\in\kk[y,w]$, we obtain  $(y-aw)f\in J$.
   The inclusion $J\subset (x,z,w)$ implies that $f$ does not contain a pure power of $y$, so $w$ divides $f$. Using again $J\colon(w)=J+(x)$ and a descending induction on the degree of $f$ we obtain that $f\in J,$ which gives the desired conclusion.

\textbf{IIId:} In this case we have
  \begin{align*}
     \partial_3^*&= \begin{pmatrix}
      z& 0&  0& 0& y& 0& -x& 0\\
       w&z& -z& 0& x& y& 0& -x\\
       0&  w& 0& -z&0& x& 0& 0
    \end{pmatrix}\quad\text{and}\\
    \varphi&= \begin{pmatrix}[w^3]&[xy^2-zw^2]&[-y^3]\end{pmatrix}.
    \end{align*}
First, we prove that all eight components of $\varphi\partial_3^*$  are zero modulo $J$.
The first component is: 
\[[zw^3+xy^2w-zw^3]=[y^2][xw]=0.\]
The second component is: 
\[[xy^2z-z^2w^2-y^3w]=[y^2][xz]-[z^2][w^2]-[y^2][yw]=0.\]
The third component is: 
\[[-xy^2z+z^2w^2]=-[y^2][xz]+[z^2][w^2]=0.\]
The fourth component is: 
\[[y^3z]=[y^2][yz]=0.\]
The fifth component is: 
\[[yw^3 +x^2y^2-xzw^2]=[yw][w^2]+[x^2][y^2]-[xz][w^2]=0.\]
The sixth component is: 
\[[xy^3-yzw^2-xy^3]=-[yz][w^2].\] 
The seventh component is: 
\[-[xw^3]=-[xw][w^2]=0.\]
The eight component is: 
\[[-x^2y^2+xzw^2]=-[x^2][y^2]+[xz][w^2]=0.\]

Next, to prove $\Ker\varphi\subseteq\im\partial_3^*$, we consider homogeneous polynomials $f,g,h$ in $Q$ of the same degree such that 
\[w^3f+(xy^2-zw^2)g+(-y^3)h\in J.\]
 By using the columns two, four, and six of $\partial_3^*$, we  reduce to the case $h\in\kk[y]$ and by using combinations of columns one, three,  and eight of $\partial_3^*$ we reduce to the case $g\in\kk[y]$.
 By using the columns five, seven, and eight of $\partial_3^*$, we reduce to the case $f\in\kk[z,w].$ The inclusion $J\subseteq(x,z,w)$
  implies that $h\in(x,y,z),$ hence $h=0$. Since $J$ is a monomial ideal, we obtain $(xy^2)g\in J$ so $g=0$. The inclusion above becomes $w^3f\in J.$
  Using again that $J$ is monomial we obtain $f\in J$, which finishes the proof.  \end{proof}

\begin{setup}
\label{mu9-setup-c}
We consider the ideals $I$ as in Proposition \ref{mu7-9 generators}{IV}. First, we identify an ideal $J$ of $Q$ such that $J\subset I$, for each one of the cases {IVa-IVc}.
Second, we use a terminology due to \cite{EN62} that gives a uniform description of the ideals $J$  that satisfy the conditions from Theorem \ref{mu9 structure}; see Proposition
\ref{J res mu9-c}.

\begin{enumerate}[\bf a.]
\item[\bf IVa.] \textbf{Non-singular cubic.} 
Let $a\in\kk$ such that $a^2-a+1=0$.
We may write:
\begin{flalign*}
\hspace{0.5cm}
      I &= J+ ( 3x^3+w^3,\, y^3,\, z^3),\ \text{where}&\\
      J &=\bigl(xy,\, xz,\,  xw,\, yw,\, y^2-(a-1)zw-(2a-1)z^2,\, (2a-1)zw+(a-1)w^2\bigr).&
\end{flalign*}
\item[\bf IVb.] \textbf{Cusp singularity.} We may write:
\begin{flalign*}
\hspace{0.5cm}
I&= J+ (z^3,\, yz^2-w^3,\, x^3-w^3),\ \text{where}&\\
J&=(xy,\, xz,\, xw,y^2,\, yw,\, zw).&
\end{flalign*}

\item[\bf IVc.] \textbf{Conic and tangent line.}  We may write:
\begin{flalign*}
\hspace{0.5cm}
I&=J+(x^3-y^2z,\, y^3,\, y^2w),\ \text{where}&\\ 
J&=(xy,\, xz,\, xw,\, yz-w^2,\, z^2,\, zw).&
\end{flalign*}
\end{enumerate}

If we consider 
\begin{equation*}
    \arraycolsep=2.5pt\def\arraystretch{1.5}
    \begin{array}{c|ccc}
      \text{\scriptsize Case } &A&B&C\\
      \hline
      \text{\scriptsize \textbf{IVa}} &(2a-1)z+(a-1)w&y&0\\
      \text{\scriptsize \textbf{IVb}} &0&y&w\\
      \text{\scriptsize \textbf{IVc}} &w&0&z\\
    \end{array}
  \end{equation*}
then
\begin{equation*}
    J = (xy,\, xz,\, xw,\,  Az-By,\, Aw-Cy,\, Bw-Cz),
\end{equation*}
i.e. $J$ is generated by the $2\times 2$ minors of the matrix 
\begin{equation*}
 M=\begin{pmatrix}x&A&B&C\\0&y&z&w\end{pmatrix}.  
\end{equation*}
\end{setup}

Based on a description from \cite{EN62} we have the following result:

\begin{proposition}
\label{J res mu9-c}
 Adopt Setup  \ref{mu9-setup-c}. 
 The following assertions hold.
  \begin{enumerate}[$(a)$]
\item The ideal $J$ is a grade three perfect ideal with minimal free resolution of $Q/J$ over $Q$ given by
\[
{\bG}\colon 0\to Q^3(-4)\xra{\partial_3}Q^8(-3)\xra{\partial_2}Q^6(-2)\xra{\partial_1}Q,
\]
where
   \begin{align*}
     \partial_3&=
     \begin{pmatrix}
     C&w&0\\
     0&C&w\\
     -B&-z&0\\
     0&-B&-z\\
     A&y&0\\
     0&A&y\\
     -x&0&0\\
     0&-x&0
     \end{pmatrix},&
     \partial_2&=
     \begin{pmatrix}
     -B&-z&-C&-w&0&0&0&0\\
     A&y&0&0&-C&-w&0&0\\
     -x&0&0&0&0&0&-C&-w\\
     0&0&A&y&B&z&0&0\\
     0&0&-x&0&0&0&B&z\\
     0&0&0&0&-x&0&-A&-x
      \end{pmatrix},\\
     &&\partial_1&=
     \begin{pmatrix}
     xy&xz&zA-yB&xw&wA-yC&wB-zC
     \end{pmatrix}.
     \end{align*}
     \item The sequence 
$Q^8(-4)\xrightarrow{-\partial_3^*}Q^3(-3)\xrightarrow{\varphi}I/J\to 0$
is exact, where

$\varphi=
\begin{cases}
\begin{pmatrix}
      [-3z^3]&[-(2a-1)y^3]&[3x^3+w^3-3(2a-1)z^3]
    \end{pmatrix},&\text{if {IVa}}\\
\begin{pmatrix}
      [-yz^2-z^3+w^3]&[yz^2-w^3]&[-x^3+w^3]
    \end{pmatrix},&\text{if {IVb}}\\
\begin{pmatrix}
       [-y^3]&[y^2w]&[x^3-y^2z]
    \end{pmatrix},&\text{if {IVc.}}
\end{cases}$
\end{enumerate}
  \end{proposition}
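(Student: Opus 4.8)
The plan is to follow the argument of Proposition~\ref{J res mu9} essentially verbatim, since $\bG$ is again the Eagon--Northcott complex of a $2\times 4$ matrix $M$ and $I$ is again obtained from $J$ by adjoining the three cubic forms $q_1,q_2,q_3$ that appear as the entries of $\varphi$. For part~(a) I would first read off $J=\Coker\partial_1$ from Setup~\ref{mu9-setup-c}; since $\bG$ is the Eagon--Northcott complex of $M$ \cite{EN62}, the relations $\partial_1\partial_2=0$ and $\partial_2\partial_3=0$ hold formally, and dualizing gives $\partial_2^*\partial_1^*=0$ and $\partial_3^*\partial_2^*=0$, so both $\bG$ and $\Sigma^{-3}\bG^*$ are complexes. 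To apply the Buchsbaum--Eisenbud Acyclicity Criterion \cite[Theorem 1.4.13]{BH93} it then suffices to check $\grade{}{I_5(\partial_2)}\ge 2$, $\grade{}{I_3(\partial_3)}\ge 3$ and $\grade{}{I_1(\partial_1)}\ge 3$ (the last being needed for the acyclicity of $\Sigma^{-3}\bG^*$ as well). For the first two I would, case by case in IVa--IVc and with a Macaulay2 check \cite{GS} as in Proposition~\ref{J res mu9}, exhibit an explicit short regular sequence of the required length inside the relevant determinantal ideal; for $\grade{}{J}\ge 3$ I would argue as in the proof of Proposition~\ref{J res mu7-c} that the subscheme of $\BP^3$ defined by $J$ is zero-dimensional (it contains $[1:0:0:0]$, and on the locus $x=0$ the three $2\times 2$ minors of $M$ not involving $x$ leave only finitely many points, as one checks in each case), so that $J=I_1(\partial_1)$ has grade three by \cite[Proposition 1.2.10(c)]{BH93}. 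The Acyclicity Criterion then makes $\bG$ and $\Sigma^{-3}\bG^*$ acyclic, and since every entry of $\partial_1,\partial_2,\partial_3$ lies in $(x,y,z,w)$ the resolution $\bG$ is minimal; thus $J$ is grade-three perfect with the stated minimal resolution.

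For part~(b), surjectivity of $\varphi$ is immediate from $I=J+(q_1,q_2,q_3)$. For the inclusion $\im\partial_3^*\subseteq\Ker\varphi$ I would write out $\partial_3^*$ and check, component by component, that all eight entries of the product $\varphi\,\partial_3^*$ vanish modulo $J$; this rests on the six generators of $J$, the $2\times 2$-minor relations among $A,B,C,y,z,w$, and a short list of identities in $Q/J$ of colon type $J:(q_i)$ analogous to those used in Proposition~\ref{J res mu9} (in case IVa one keeps the relation $a^2-a+1=0$ in play throughout). For the reverse inclusion $\Ker\varphi\subseteq\im\partial_3^*$ I would take homogeneous $f,g,h$ of equal degree with $q_1f+q_2g+q_3h\in J$ and use suitable columns of $\partial_3^*$ to bring $h$ and $g$ into normal forms involving only $y$ and $f$ into a normal form involving only two of the variables; the inclusion $J\subseteq(x,z,w)$ then forces $h=0$, and the monomial structure of $J$ (cases IVb, IVc) or a colon computation $J:(q_1)$ (case IVa) forces $g=0$ and $f\in J$, whence Lemma~\ref{J entries} places the triple $(f,g,h)$ in $\im\partial_3^*$ and the sequence is exact.

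The main obstacle is the computational core of part~(b): the eight-component verification that $\varphi\,\partial_3^*=0$ in each of IVa, IVb and IVc, together with the supporting colon-ideal identities in $Q/J$. These steps are elementary but lengthy, and case IVa is the heaviest because of the constraint $a^2-a+1=0$; everything else, including the explicit regular sequences needed for the grade bounds in part~(a), is routine once those sequences are written down.
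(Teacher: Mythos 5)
Your proposal follows the paper's proof essentially step for step: part~(a) is the Eagon--Northcott complex plus the Buchsbaum--Eisenbud Acyclicity Criterion with explicit regular sequences (or the zero-dimensional scheme argument) for the three grade bounds, and part~(b) is the two-inclusion check with the eight-component verification and a column-normalization followed by a colon computation. One detail to watch: you transcribed the variable roles from Proposition~\ref{J res mu9}, but Setup~\ref{mu9-setup-c} has the matrix $M=\bigl(\begin{smallmatrix}x&A&B&C\\0&y&z&w\end{smallmatrix}\bigr)$ with $y$ where Setup~\ref{mu9-setup} has $x$ in position $(2,2)$, so the roles of $x$ and $y$ swap throughout part~(b). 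Concretely, columns $2,4,6$ of $\partial_3^*$ here have third entries $w,-z,y$, so the normalization gives $h\in\kk[x]$ (not $\kk[y]$); columns $1,3,5,8$ kill $g$ entirely and column $7$ brings $f\in\kk[y,z,w]$; and the ideal containment that forces $h=0$ is $J\subseteq(y,z,w)$, not $J\subseteq(x,z,w)$ (indeed $J\not\subseteq(x,z,w)$ in cases IVa and IVb, since $y^2-\cdots$ resp.\ $y^2$ is a generator). After $h=0$ and $g=0$ one is left with $q_1 f\in J$ with $f\in\kk[y,z,w]$, and the colon computations used are $J\colon(z)=J+(x)$ (IVa), $J\colon(-yz^2-z^3+w^3)=J+(x)$ (IVb), and $J\colon(y)=J+(x)$ (IVc) from Proposition~\ref{colon JIV}. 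With that substitution your outline coincides with the paper's argument.
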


\begin{proof}
It is clear that $J=\Coker\partial_1$.  Remark that $\bG$ is the Eagon-Northcott complex associated with the map given by the matrix $M$ in Setup \ref{mu9-setup}; see \cite{EN62}. 
It is easy to check that  $\partial_1\partial_2=0$ and    $\partial_2\partial_3=0$, thus $\partial_2^*\partial_1^*=0$ and $\partial_3^*\partial_2^*=0$. Therefore, both $\bG$ and $\Sigma^{-3}\bG^*$ are complexes.
    
There sequences
$\{xz+xw,\, yw,\, y^2-(a-1)zw-(2a-1)z^2\}$ in case IVa, 
$\{y^2,\, zw,\, xz+xw\}$ in case IVb, and  $\{xy,\, z^2,\, yz-w^2\}$ in case IVc are regular and contained in the ideal $I_1(\partial_1)$. To check this, we can use either the definition of regular sequence, in the easy cases, or an argument similar to the one we use in the proof of Proposition \ref{J res mu7-c} and show that the ideal generated by these sequences defines a zero\h{dimensional} scheme, and therefore has grade three. Thus the sequences must be regular. Hence, $\grade{}{I_1(\partial_1)}\geq 3$.
    
The sequences $\{x^2z^3,\, y^3w^2\}$ in case IVa, $\{y^5,\, z^3w^2\}$ in case IVb, and $\{x^2y^3,\, z^5\}$ in case IVc are regular and contained in the ideal $I_5(\partial_2)$. Hence, $\grade{}{I_5(\partial_2)}\geq 2$.
    
The sequences $\{x^2z+x^2w,yw^2,(y-z)\big(y^2-(a-1)zw-(2a-1)z^2\big) \}$ in case IVa, $\{y^3,\, z^2w,\, x^2z+x^2w\}$ in case IVb, and case $\{xy^2,\, z^3,\, w^3\}$ in case IVc are regular and contained in the ideal $I_3(\partial_3)$. Hence, $\grade{}{I_3(\partial_3)}\geq 3$.
   
By the Acyclicity Criterion \cite[Theorem 1.4.13]{BH93} it follows that both complexes $\bG$  and $\Sigma^{-3}{\bG}^*$ are acyclic. Therefore, $J$ is a grade three perfect ideal.
   
(b):  It is clear that in each case  $\varphi$ is a surjective homomorphism. 
  To prove the inclusion $\im\partial_3^*\subseteq\Ker\varphi$, we show that each one of the eight components of the composition $\varphi\partial_3^*$  is zero modulo $J$. To prove the other inclusion $\Ker\varphi\subseteq\im\partial_3^*,$ we consider a homogeneous element of $\Ker\varphi$  and show that it is in $\im\partial_3^*$. By Lemma \ref{J entries}, it is enough to show that all components of this homogeneous element are in the ideal $J$.
  
\textbf{IVa.} In this case we have:
\begin{align*}
\partial_3^*&=\begin{pmatrix}
      0 &0 &-y& 0&  (2a-1)z+(a-1)w& 0& -x& 0\\
      w& 0& -z& -y& y& (2a-1)z+(a-1)w& 0& -x\\
      0& w &0&-z& 0& y& 0&  0
    \end{pmatrix}  \\
   \varphi&=\begin{pmatrix}
      [-3z^3]&[-(2a-1)y^3]&[3x^3+w^3-3(2a-1)z^3]
    \end{pmatrix}.
\end{align*}
We show that each one of the eight components of the composition $\varphi\partial_3^*$  is zero modulo $J$. 
By the definition of $J$ and  Proposition \ref{colon JIV}(a) we have the following equalities in $Q/J:$
\begin{align*}
[y^3-(2a-1)yz^2]&=0 & [w^4-3(2a-1)z^3w]&=0&\\ [zw^3+3(a-1)z^3w]&=0 & [y^4+(a+1)z^3w+3z^4]&=0\\ [(2a-1)y^4-zw^3+3(2a-1)z^4]&=0.&&&
\end{align*}
The first component is: 
\[
[-(2a-1)y^3w]=-(2a-1)[y^2][yw]=0.
\]
The second component is: 
\[
[3x^3w+w^4-3(2a-1)z^3w]=3[x^2][xw]+[w^4-3(2a-1)z^3w]=0.
\]
Using that $(2a-1)^2=-3$, the third component simplifies as:
\[
[3yz^3+(2a-1)y^3z]=(2a-1)[y^3-(2a-1)yz^2][z]=0.
\]
The fourth component is:
\begin{align*}
  &[(2a-1)y^4-3x^3z-zw^3+3(2a-1)z^4]\\
&=[(2a-1)y^4-zw^3+3(2a-1)z^4]-3[x^2][xz]=0.  
\end{align*}
Using that $(2a-1)^2=-3$ and $-(2a-1)(a-1)=a+1$, the fifth component is:
\begin{align*}
&-[3(2a-1)z^4+3(a-1)z^3w+(2a-1)y^4]\\
&=-(2a-1)[y^4+(a+1)z^3w+3z^4]=0.
\end{align*}
Using that $(2a-1)^2=-3$ again, the sixth component is: 
\[
-[(2a-1)^2y^3z+3(2a-1)yz^3]=3[yz][y^2-(2a-1)z^2-(a-1)zw]=0
\]
The seventh component is: 
\[
[3xz^3]=3[xz][z^2]=0.
\]
The eight component is: 
\[
[(2a-1)xy^3]=(2a-1)[xy][y^2]=0.
\]

Next, to prove $\Ker\varphi\subseteq\im\partial_3^*$, we consider homogeneous polynomials $f,g,h$ in $Q$ such that 
\[
-3z^3f -(2a-1)y^3g+(3x^3+w^3-3(2a-1)z^3)h\in J.
\]
Using the columns two, four, and six we may reduce to the case when $h\in\kk[x]$. Using the columns one, three, five, and eight we may assume $g=0$. Using  column seven we may assume that  $f\in\kk[y,z,w]$. The inclusion  $J\subseteq (y,z,w)$ implies $h=0$ and hence we have $-3z^3f\in J$. By Proposition \ref{colon JIV}(a) we have $J\colon (z)=J+(x)$, so we get $f\in J$ which finishes the proof.

\textbf{IVb.} In this case we have:  
\begin{align*}
\partial_3^*&=\begin{pmatrix}
      w &0 &-y& 0&  0& 0& -x& 0\\
      w& w& -z& -y& y& 0& 0& -x\\
      0& w &0&-z& 0& y& 0&  0
    \end{pmatrix}\quad\text{and}\\
    \varphi&=\begin{pmatrix}
      [-yz^2-z^3+w^3]&[yz^2-w^3]&[-x^3+w^3]
    \end{pmatrix}.
\end{align*}
We show that each one of the eight components of the composition $\varphi\partial_3^*$  is zero modulo $J$. 
The first component is: 
\[[-yz^2w-z^3w+w^4+yz^2w-w^4]=-[z^2][zw]=0.\]
The second component is: 
\[[yz^2w-w^4-x^3w+w^4]=[yz][zw]-[x^2][xw]=0.\]
The third component is: 
\[[y^2z^2+yz^3-yw^3-yz^3+zw^3]=[y^2][z^2]-[yw][w^2]+[zw][w^2]=0.\]
The fourth component is:  \[[-y^2z^2+yw^3+x^3z-zw^3]=-[y^2][z^2]+[yw][w^3]+[x^2][xz]-[zw][w^2]=0.\]
The fifth component is: 
\[[y^2z^2-yw^3]=[y^2][z^2]-[yw][w^2]=0.\]
The sixth component is: 
\[[-x^3y+yw^3]=-[x^2][xy]+[yw][w^2]=0.\]
The seventh component is: 
\[[xyz^2+xz^3-xw^3]=[xy][z^2]+[xz][z^2]-[xw][w^2]=0.\]
The eight component is: 
\[[-xyz^2+xw^3]=-[xy][z^2]+[xw][w^2]=0.\]

Next, to prove $\Ker\varphi\subseteq\im\partial_3^*$, we consider homogeneous polynomials $f,g,h$ in $Q$ such that 
\[(-yz^2-z^3+w^3)f+ (yz^2-w^3)g+(-x^3+w^3)h\in J.\]
Using columns two, four, and six we may reduce to the case when $h\in\kk[x]$. Using columns one, three, five, and eight we may assume $g=0$. Using column seven we may assume that  $f\in\kk[y,z,w]$. The inclusion  $J\subseteq (y,z,w)$ implies $h=0$ and hence we obtain $(-yz^2-z^3+w^3)f\in J$. By Proposition \ref{colon JIV}(b) we have 
$J\colon(-yz^2-z^3+w^3)=J+(x)$, so we get $f\in J$, which finishes the proof.

\textbf{IVc.} In this case we have: 
\begin{align*}
\partial_3^*&=
\begin{pmatrix}
      z&0& 0& 0&w& 0& -x& 0\\
      w&z&-z& 0& y& w& 0& -x\\
      0&w& 0&-z& 0& y& 0&  0
    \end{pmatrix}\quad\text{and}\\
\varphi&=\begin{pmatrix}
      [-y^3]&[y^2w]&[x^3-y^2z]
    \end{pmatrix}.
\end{align*}
We show that each one of the eight components of the composition $\varphi\partial_3^*$  is zero modulo $J$. 
The first component is: 
\[[-y^3z+y^2w^2]=-[y^2][yz-w^2]=0.\]
The second component is: 
\[[y^2zw+x^3w-y^2zw]=[x^2][xw]=0.\]
The third component is: 
\[-[zy^2w]=-[y^2][zw]=0.\]
The fourth component is: 
\[[-x^3z+y^2z^2]=-[x^2][xz]+[y^2][z^2]=0.\]
The fifth component is: 
\[[-y^3w+y^3w]=0.\]
The sixth component is: 
\[[y^2w^2+x^3y-y^3z]=-[y^2][yz-w^2]+[x^2][xy]=0.\]
The seventh component is: 
\[[xy^3]=[xy][y^2]=0.\]
The eight component is:
\[-[xy^2w]=-[xy][yw]=0.\]

Next, to prove $\Ker\varphi\subseteq\im\partial_3^*$, we consider homogeneous polynomials $f,g,h$ in $Q$ such that 
\[(-y^3)f+y^2wg+(x^3-y^2z)h\in J.\]
Using the columns two, four, and six we may reduce to the case when $h\in\kk[x]$. Using the columns one, three, five, and eight we may assume $g=0$. Using the column seven we may assume that  $f\in\kk[y,z,w]$. The inclusion  $J\subseteq (y,z,w)$ implies $h=0$ and hence we have $-y^3f\in J$. By Proposition \ref{colon JIV}(c) we have $J\colon(y)=J+(x)$, hence $f\in J$ which finishes the proof.\end{proof}

\begin{proof}[Proof of Theorem \ref{mu9 structure}.]
Let $J$ be an ideal defined as in Setups \ref{mu9-setup} and \ref{mu9-setup-c}. By Propositions \ref{J res mu9}(a) and \ref{J res mu9-c}(a), $J$  is a grade three perfect ideal of $Q$ such that $Q/J$ has the resolution format $\ff_J=(1,6,8,3)$. The rest of the proof follows analogously to the proof of Theorem \ref{mu7 structure}.
\end{proof}

\begin{corollary}
\label{cor: mu9 structure} 
Let $\mathsf{k}$ be an algebraically closed field with $\cha\kk=0$ and $I$ a homogeneous Gorenstein  ideal of the ring  $Q=\kk[x,y,z,w]$, minimally generated by nine elements,  with $(x,y,z,w)^4\subseteq I\subseteq(x,y,z,w)^2$.
Then, a graded minimal free resolution of $Q/I$ has the form:
  \[
\bF\colon 0\to 
Q(-7)\xra{}
\begin{matrix}
Q^3(-4)\\
\oplus\\
Q^6(-5)
\end{matrix}
\xra{}
\begin{matrix}
Q^8(-3)\\
\oplus\\
Q^8(-4)
\end{matrix}
\xra{}
\begin{matrix}
Q^6(-2)\\
\oplus\\
Q^3(-3)
\end{matrix}
\xra{}
Q.
  \]
\end{corollary}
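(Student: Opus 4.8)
The plan is to run the argument of Corollary~\ref{cor: mu7 structure}, now with the resolutions produced in Section~\ref{Structure9gens}. First I would record, from Propositions~\ref{J res mu9}(a) and \ref{J res mu9-c}(a), that in each of the seven subcases of Proposition~\ref{mu7-9 generators}.III--IV the associated ideal $J$ has the graded minimal free resolution
\[
\bG\colon 0\to Q^3(-4)\xra{\partial_3}Q^8(-3)\xra{\partial_2}Q^6(-2)\xra{\partial_1}Q .
\]
Dualizing and twisting so that the resulting complex resolves $\omega_{Q/J}(-3)$ (the twist by $-7$ being determined, exactly as in the proof of Theorem~\ref{mu7 structure}, by the identity $\omega_{Q/J}=(\Coker\partial_3^*)(-4)$), one obtains
\[
\Sigma^{-3}\bG^*(-7)\colon 0\to Q(-7)\xra{}Q^6(-5)\xra{}Q^8(-4)\xra{}Q^3(-3).
\]

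Next I would feed in the exact sequence $Q^8(-4)\xrightarrow{-\partial_3^*}Q^3(-3)\xrightarrow{\varphi}I/J\to 0$ of Propositions~\ref{J res mu9}(b) and \ref{J res mu9-c}(b). As in the proof of Theorem~\ref{mu9 structure}, $\varphi$ identifies $\Coker\partial_3^*$, up to the twist $(-3)$, with $I/J$, and so induces a homogeneous embedding $\iota\colon\omega_{Q/J}(-3)\hookrightarrow Q/J$; this $\iota$ lifts to a chain homomorphism of graded complexes $\overline\iota\colon\Sigma^{-3}\bG^*(-7)\to\bG$. Setting $\bF\colon=\cone\overline\iota$, Lemma~\ref{exact F} (with $S=Q$ and $g=3$) gives that $\bF$ is a graded free resolution of the $Q$-module $Q/I$, whose $i$-th module is $\bF_i=G_i\oplus(\Sigma^{-3}\bG^*(-7))_{i-1}$; that is,
\[
0\to Q(-7)\to Q^3(-4)\oplus Q^6(-5)\to Q^8(-3)\oplus Q^8(-4)\to Q^6(-2)\oplus Q^3(-3)\to Q,
\]
which is precisely the asserted format.

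It remains to check minimality. Since $\bG$ is a minimal resolution, all entries of its differentials, and therefore of the differentials of $\Sigma^{-3}\bG^*(-7)$, lie in $\fm=(x,y,z,w)$; and comparing the twists above, each matrix $\iota_i$ has entries of positive degree ($3$ for $\iota_0$ and $\iota_3$, $2$ for $\iota_1$ and $\iota_2$), so these entries lie in $\fm$ as well. Hence every entry of $\partial^\bF$ lies in $\fm$, and $\bF$ is a graded minimal free resolution of $Q/I$.

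I do not expect a genuine obstacle here: the substantive content --- the explicit resolution of $Q/J$ in each of the seven subcases, and the exactness of the $\varphi$-sequences --- has already been established in Propositions~\ref{J res mu9} and \ref{J res mu9-c} and packaged in Theorem~\ref{mu9 structure}. The only points requiring care are the bookkeeping of the graded shifts in $\Sigma^{-3}\bG^*(-7)$, so that the mapping cone carries exactly the displayed twists, and the routine minimality verification above.
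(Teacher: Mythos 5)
Your proposal is correct and follows essentially the same route as the paper's own proof: read off the shape of $\bG$ from Propositions~\ref{J res mu9}(a) and~\ref{J res mu9-c}(a), use the $\varphi$-presentation of $I/J$ to build $\iota$ and lift it to $\overline\iota\colon\Sigma^{-3}\bG^*(-7)\to\bG$, and then invoke Lemma~\ref{exact F} on the mapping cone. Your explicit degree check for minimality (entries of each $\iota_i$ having positive degree) is a worthwhile addition that the paper makes only implicitly in this corollary (though it spells it out in the analogous Corollary~\ref{cor: Iquadratic}).
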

\begin{proof} From Propositions \ref{J res mu9}(a) and \ref{J res mu9-c}(a), in cases III and IV respectively,  we obtain that a graded minimal free resolution $\bG$ of the $Q$-module $Q/J$ that has the following form:
\[\bG\colon
0\to
Q^3(-4)
\xra{}
Q^8(-3)
\xra{}
Q^6(-2)\xra{}
Q.
\]
The map $\varphi\colon Q^3(-3)\to I/J$ constructed in Propositions \ref{J res mu9}(b) and \ref{J res mu9-c}(b), for each one of the cases III and IV, respectively, gives a homogeneous $Q$-homomorphism  $\iota\colon \omega_{Q/J}(-3)\to Q/J $; see the proof of Theorem \ref{mu9 structure}, and the hints for this proof in that of Theorem \ref{mu7 structure}.
The map $\iota$  extends to a chain homomorphism of graded complexes $\overline\iota$:

\begin{equation*}
 \xymatrixrowsep{2pc}
 \xymatrixcolsep{1pc}
 \xymatrix{
\Sigma^{-3}\bG^*(-7)\colon 
\ar@{->}[d]^{\overline\iota}
&0\ar@{->}[r]
&Q(-7)\ar@{->}[r]\ar@{->}[d]^{\iota_3}
&Q^6(-5)\ar@{->}[r]\ar@{->}[d]^{\iota_2}
&Q^8(-4)\ar@{->}[r]\ar@{->}[d]^{\iota_1}
&Q^3(-3)
\ar@{->}[d]^{\iota_0}
\\
\bG\colon
&0\ar@{->}[r]
&Q^3(-4)
\ar@{->}[r]
&Q^8(-3)
\ar@{->}[r]
&Q^6(-2)\ar@{->}[r] 
&Q
}
\end{equation*}
Set $\bF\colon =\cone\overline\iota$. By Lemma \ref{exact F},  $\bF$ is a  graded minimal free resolution of the $Q$-module $Q/I$ and  of the desired form.
\end{proof}
\section{Appendix}

In this section, we collect and prove several technical propositions that are used in the proofs of the main results of this paper.

\begin{proposition}
\label{J colon} Adopt Setup  \ref{mu7-setup}. 
The following equalities hold:
\[J\colon (z)=(x,\, T_1,\, T_2,\, T_5)=J\colon (p)
\quad\text{and}\quad  
J\colon (q)=J+(x).\]
\end{proposition}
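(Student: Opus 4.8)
The plan is to strip off the variable $x$ and thereby reduce all three colon computations to a single statement about the ideal $\bar J:=(T_1,T_2)\subseteq\kk[y,z,w]$. I first dispose of the easy containments. From $J=(x^2,xz,xw,T_1,T_2)$ and \eqref{iden1} one has $xz\in J$, $zT_1,zT_2\in J$, and $zT_5=AT_1-CT_2\in J$, so $(x,T_1,T_2,T_5)\subseteq J\colon(z)$; replacing $z$ by $p$ (note $z,w\mid p$) gives likewise $(x,T_1,T_2,T_5)\subseteq J\colon(p)$, while $xq=x^2y-xp\in J$ and $J\subseteq J\colon(q)$ give $J+(x)\subseteq J\colon(q)$. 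Also $z\mid p$ yields $J\colon(z)\subseteq J\colon(p)$.

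For the reverse inclusions, write an arbitrary homogeneous $f\in Q$ as $f=xg+h$ with $h=f(0,y,z,w)\in\kk[y,z,w]$. Since $x$ lies in each ideal on the left, $f$ belongs to it exactly when $h$ does, and since $xz,xw\in J$ one has $zf\in J\iff zh\in\bar J$ and $pf\in J\iff ph\in\bar J$; thus (a) and (b) reduce to proving $\bar J\colon(z)=\bar J\colon(p)=(T_1,T_2,T_5)$ inside $\kk[y,z,w]$. For (c) one needs a little more care with the $x$-adic filtration of $J$: using $x^2yg\in J$, $xpg\in J$ and $xyh'\in J$ for the part $h'$ of $h$ divisible by $z$ or $w$, the membership $qf\in J$ reduces to $xyh''-ph\in J$ with $h''$ the pure-$y$ part of $h$; comparing the degree-one-in-$x$ components and using that no monomial $xy^n$ lies in $J$ forces $h''=0$ and then $ph\in\bar J$. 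Hence (c) becomes $\bigl(\bar J\colon(p)\bigr)\cap(z,w)=\bar J$, which will follow from (b) once one records that $T_1,T_2\in(z,w)$, $T_5\equiv y^2\pmod{(z,w)}$ (from \eqref{iden2}), and $(z,w)T_5\subseteq\bar J$ (from \eqref{iden1}).

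It remains to prove $\bar J\colon(p)=(T_1,T_2,T_5)=:B$ in $\kk[y,z,w]$; the chain $B\subseteq\bar J\colon(z)\subseteq\bar J\colon(p)$ already gives one inclusion. For the other, note that $T_1$, $-T_2$, $T_5$ are precisely the three $2\times2$ minors of the linear matrix $\Phi=\left(\begin{smallmatrix}A&B\\-C&-D\\-z&-w\end{smallmatrix}\right)$, so $B=I_2(\Phi)$; an elementary coprimality check gives $\operatorname{grade}B=2$, so by the Hilbert--Burch theorem $B$ is a perfect --- hence Cohen--Macaulay and unmixed --- codimension-two ideal. A short radical computation shows $\sqrt{B+(z)}=\sqrt{B+(w)}=(y,z,w)$, so neither $z$ nor $w$ (hence not $p$) lies in any associated prime of $B$, i.e.\ $p$ is a nonzerodivisor on $\kk[y,z,w]/B$. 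Since $\bar J\subseteq B$, any $f$ with $pf\in\bar J$ has $pf\in B$ and therefore $f\in B$, giving $\bar J\colon(p)\subseteq B$ and closing the argument; the only case distinction between Ia and Ib sits in the coprimality and radical verifications, which yield the same conclusion in both cases.

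The step I expect to be the main obstacle is the descent from $Q$ to $\kk[y,z,w]$ in the second paragraph, and within it the $J\colon(q)$ case: one must keep careful track of $x$-adic components and extract the vanishing of the pure-$y$ component of $h$ before the lemma on $\bar J$ can be applied. Everything downstream of that reduction is the standard Hilbert--Burch/unmixedness package applied to the explicit matrix $\Phi$.
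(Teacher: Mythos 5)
Your argument is essentially correct, and it takes a genuinely different route from the paper. The paper proves the three colon equalities by hand: it writes $zf=gT_1+hT_2$, factors through the explicit expressions $T_1=wC-zD$, $T_2=wA-zB$, and extracts a factor $\ell$ by a divisibility argument, feeding back in the syzygies \eqref{iden1}--\eqref{iden2}; the input that $\{x^2,T_1,T_2\}$ is regular is secured separately via an Eisenstein argument (Lemma~\ref{reg seq I}). You instead recognize $B=(T_1,-T_2,T_5)=I_2(\Phi)$ for the linear $3\times2$ matrix $\Phi$, invoke Hilbert--Burch to conclude $B$ is perfect of codimension two (hence unmixed), and then use a variety/radical computation to see that $p$ avoids every associated prime of $B$, getting $\bar J\colon(p)\subseteq B$ in one stroke. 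The $x$-adic reduction folding $J\colon(q)$ into $(\bar J\colon(p))\cap(z,w)=\bar J$ is a nice repackaging of the paper's descending-induction step and is sound. What the paper's route buys is self-containedness; what yours buys is a structural explanation of why $T_5$ closes the colon.

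Two things to tighten. First, the parenthetical ``note $z,w\mid p$'' is false in case~Ib, where $p=z^2$ and $w\nmid p$; correspondingly, the claim $\sqrt{B+(w)}=(y,z,w)$ fails there: in Ib one has $T_2=w(y-aw)$, so $w$ is a zerodivisor modulo $B$ and $\sqrt{B+(w)}=(w,\,y+a^2z)$. Your argument survives only because in Ib it is $z$ alone (not $w$) that must be a nonzerodivisor; the case distinction has to be stated, and the final sentence asserting that the radical verifications ``yield the same conclusion in both cases'' is not quite accurate. Second, the ``elementary coprimality check'' giving $\operatorname{grade}B\geq2$ is a genuine computation, not a gloss: a degree-$1$ common factor $f$ of $T_1,T_2,T_5$ would have to satisfy $f\not\in(z,w)$ (else $T_5\in(z,w)$, contradicting $T_5\equiv y^2$) and then forces the $yw$- and $w^2$-coefficients of $T_1$ to vanish, which they do not; the paper's Eisenstein argument for the irreducibility of $T_1$ is another way to settle it. Either of these should be recorded explicitly before Hilbert--Burch is invoked.
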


\begin{proof} 
 Set $K=(x,\, T_1,\, T_2,\, T_5)$. 
 Using that $J =(x^2,\, xz,\, xw,\, T_1,\ T_2)$ and $zT_5\in(T_1,\,T_2)$ by \eqref{iden1}, we get $K\subseteq J\colon (z)$.   To prove  the reverse inclusion $J\colon (z)\subseteq K$, we consider  a homogeneous element $f$ in $Q$ such that $zf\in J$ and prove that $f$ is in $K$.
Since $x\in K$, we may assume that $f\in\kk[y,z,w]$.
Hence,   $zf\in J$ is equivalent to $zf\in (T_1,\, T_2)\kk[y,z,w]$. Therefore, there exist homogeneous polynomials $g_1,h_1\in\kk[y,z,w]$ and $g,h\in \kk[y,w]$ such that
${zf=(zg_1+g)T_1+(zh_1+h)T_2},$
which is equivalent to $z(f-g_1T_1-h_1T_2)=gT_1+hT_2$. As $T_1,T_2\in K$,  we assume that $g_1=h_1=0$, so 
\begin{equation}
\label{zf}
   zf=gT_1+hT_2. 
\end{equation}
Using the expressions for $T_1$ and $T_2$ we get
\begin{align*}
  zf=g(wC-zD)+h(wA-zB)=w(gC+hA)-z(gD+hB),
\end{align*}
which implies that  $z$ divides $gC+hA$.

\textbf{Ia.} Using the definitions of $A$ and $C$, in this case we obtain:
\begin{align*}
\quad gC+hA&=g((a^2+b)z-w)+hA=zg(a^2+b)+(-gw+hA).
\end{align*}
Using  $gC+hA\in(z)$, we get $-gw+hA\in(z)$.
Since $g,h,A\in\kk[y,w]$ it follows that $-gw+hA=0$. Since $\gcd(w,A)=1$, there exists $\ell\in \kk[y,w]$ such that $g=\ell A$ and  $h=\ell w$. The equation  \eqref{zf} becomes:
\begin{align*}
  zf&=\ell(AT_1+wT_2)\\
    &=\ell(AT_1-CT_2+(a^2+b)zT_2)\\
    &=\ell(zT_5+(a^2+b)zT_2)\\
    &=z\ell(T_5+(a^2+b)T_2),
\end{align*}
where the second equality uses the definition of $C$ and the third equality uses \eqref{iden1}. Dividing now by $z$, we obtain $f=\ell(T_5+(a^2+b)T_2)\in K$.

\textbf{Ib.} Using the definitions of $A$ and $C$, in this case we obtain:
\begin{align*}
gC+hA &=g(-az-w)+hA=-azg+(-wg+hA).
\end{align*}
Using $gC+hA\in(z)$, we get $-wg+hA\in(z)$. Since $g,h,A\in\kk[y,w]$ it follows that $-wg+hA=0$. Since $\gcd(w,A)=1$, there exists $\ell \in \kk[y,w]$ such that $g=\ell A$ and $h=\ell w$. The equation  \eqref{zf} becomes:
\begin{align*}
  zf&= \ell(AT_1+wT_2)\\ 
    &= \ell(AT_1-CT_2-azT_2)\\
    &= \ell(zT_5-azT_2)\\
    &= z\ell(T_5-aT_2),
\end{align*}
where the second equality uses the definition of $C$ and the third equality uses \eqref{iden1}. Dividing now by $z$, we obtain $f = \ell(T_5-aT_2)\in K$.

We conclude that the equality $K=J\colon (z)$ holds in both cases Ia and Ib. Next, we prove the equality $K=J\colon (p)$. Remark that it is enough to show: 
\begin{equation*}
    K=K\colon (w)\ \text{if Ia}
\quad
\text{and}
\quad
 K=K\colon (z)\ \text{if Ib.}
\end{equation*}
The inclusions $``\subseteq"$ clearly hold, so we prove the other inclusions.

\textbf{Ia.} Let $f$ be a homogeneous element in $Q$ such that $wf\in K$. We prove that $f\in K$. Since $x\in K$,  we assume that $f\in \kk[y,z,w]$. Arguing as above, we assume that  there exist homogeneous polynomials $g,h,k\in\kk[y,z]$ such that
\begin{equation}
\label{wf1}
 wf = gT_1+hT_2+kT_5.   
\end{equation}
By  \eqref{iden1}  we have  $zT_5 \in (T_1,T_2)\subseteq K$, thus we may further  assume that $k$  has no term involving $z$, so  $k=ey^n$ for some $e\in \kk$ and $n\geq 0$.
However, since $ey^{n+2}$ is the only term in the expression \eqref{wf1} containing $y$ as a pure power, we must have $e=0$, hence 
\begin{equation}
\label{wf2}
 wf = gT_1+hT_2.   
\end{equation}
Using the expressions of $T_1$ and $T_2$ we may further write:
\begin{align*}
  wf&=g(wC-zD)+h(wA-zB)\\
    &= w(gC+hA)-z(gD+hB)\\
    &= w\big(gC+hA-zh(a+b^2)\big)-z(gD+zh),
\end{align*}
hence $gD+zh\in(w)$. Since $g,D,h\in\kk[y,z]$ it follows that $gD+zh=0$. Since $\gcd(D,z)=1$, there exists $\ell\in \kk[y,z]$ such that
$g=z\ell$ and $h=-\ell D$. The equation  \eqref{wf2} becomes:
\begin{align*}
  wf &= \ell(zT_1-DT_2)\\
     &= \ell\big((z-B)T_1+BT_1-DT_2\big)\\
     &= \ell((a+b^2)wT_1+wT_5)\\
     &= w\ell((a+b^2)T_1+T_5),
\end{align*}
where the third equality uses the definition of $B$ and \eqref{iden2}. Dividing now by $w$, we get $f=\ell\big((a+b^2)T_1+T_5\big)\in K$.

\textbf{Ib.} Let $f$ be a homogeneous element in $Q$ such that $zf\in K$. We prove that $f\in K$. Since $x\in K$,  we assume that $f\in \kk[y,z,w]$. Arguing as above, we assume that  there exist homogeneous polynomials $g,h,k\in\kk[y,w]$ such that
\begin{equation}
\label{zf1}
zf = gT_1+hT_2+kT_5.
\end{equation}
By \eqref{iden2} we have  $wT_5 \in (T_1,T_2)\subseteq K$, thus we may further assume that $k=ey^n$ for some $e\in \kk$ and $n\geq 0$.
However, since $ey^{n+2}$ is the only term in equation \eqref{zf1} containing $y$ as a pure power, we must have $e=0$. Therefore, 
\begin{equation*}
\label{zf2}
zf = gT_1+hT_2.
\end{equation*}
By  the equality $K=J\colon (z)$ we conclude that $f\in K$.  

Finally, we prove the equality $J+(x)=J\colon(q)$. The inclusion $J+(x)\subseteq J\colon(q)$ is trivial.
To prove the other inclusion, it is enough to consider $f$ a homogeneous polynomial in $\kk[y,z,w]$ such that $qf\in J$ and to show that $f\in J$. 
There exist $e\in\kk,\ n\geq 0,$ and $h\in(z,w)\kk[y,z,w]$ such that  
$f=ey^n+h$.
\begin{align*}
    qf&=(xy-p)(ey^n+h)\\
      &=exy^{n+1}-ey^{n}p+h(xy-p).
\end{align*}
The inclusion \[qf\in J\subseteq (x^2,\, xz,\, xw,\, yz,\, yw,\, z^2,\, zw, w^2)\] gives $exy^{n+1}\in (x^2,\, xz,\, xw,\, yz,\, yw,\, z^2,\, zw,\, w^2)$, hence $e=0$ and we thus reduce to the case $f\in(z,w)\kk[y,z,w]$. 
The inclusion $qf\in J$ implies $pf\in J+(x)$. Using the equality $J\colon (p)=(x,T_1,T_2,T_5)$ we get $f\in (T_1,T_2,T_5)$. Since $T_1,T_2\in J$,  we may further  assume that $f\in (T_5)$. Since $T_5\in y^2+(z,w)\kk[y,z,w]$ and $f\in\kk[z,w]$, there exists a homogeneous polynomial $\ell\in(z,w)$ such that $f=T_5\ell.$ Using now \eqref{iden1} we conclude that $f$ belongs to $J$, as desired. 
\end{proof}

\begin{lemma}
\label{reg seq I}
Adopt Setup  \ref{mu7-setup}. The following are regular sequences in $Q$:
\begin{align*}
\{x^2, T_1, T_2\}\quad \{x^4,zw^2T_2^{\,2}\},\quad\text{and}\quad \{x^2, wT_1, zT_5\}.
\end{align*}
\end{lemma}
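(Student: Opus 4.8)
The plan is to exploit that $Q$ is a polynomial ring over a field, hence Cohen--Macaulay, so that a finite sequence of homogeneous elements lying in the maximal ideal $(x,y,z,w)$ is a regular sequence as soon as the ideal it generates has grade, equivalently height, equal to the length of the sequence \cite[Theorem 2.1.2]{BH93}. Since height is unchanged by passing to the radical, each of the three assertions becomes a height computation. For $\{x^2,T_1,T_2\}$ and $\{x^2,wT_1,zT_5\}$ the generated ideal has the same radical as the ideal generated by $x$ together with the remaining two generators, and $x$ is a nonzerodivisor on $Q$; so it is enough to prove that $T_1,T_2$ generate a height-two ideal of $Q/(x)=\kk[y,z,w]$ in the first case, and that $wT_1,zT_5$ do in the third, i.e.\ in each case that the two polynomials are coprime in the unique factorization domain $\kk[y,z,w]$. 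For $\{x^4,zw^2T_2^{\,2}\}$ there is nothing but the standard fact that two nonzero coprime elements of the unique factorization domain $Q$ form a regular sequence, and coprimality is clear because $T_2\neq0$ while $x$ does not divide $zw^2T_2^{\,2}$, the latter involving only $y,z,w$.

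The key point that makes the two coprimality claims routine is that the quadric $T_1$ is \emph{irreducible} in $\kk[y,z,w]$, for all $a,b\in\kk$ and in both cases of Setup \ref{mu7-setup}. Indeed, viewing $T_1$ as a polynomial of degree one in $y$ with coefficients in $\kk[z,w]$, the coefficient of $y$ is $z$; hence in any factorization $T_1=\ell_1\ell_2$ into linear forms exactly one factor involves $y$, and the other is then forced to be a scalar multiple of $z$, so that $z\mid T_1$---contradicting $T_1\bmod z=-w^2\neq0$. Granting this, $T_1$ is coprime to $T_2$ (respectively to $T_5$) unless $T_2$ (respectively $T_5$) is a scalar multiple of $T_1$; but the monomial expansions recorded in Setup \ref{mu7-setup} show that $T_1$ has coefficient $1$ on $yz$ and $0$ on $y^2$, whereas $T_2$ has coefficient $0$ on $yz$ and $T_5$ has coefficient $1$ on $y^2$, ruling this out. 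For $wT_1$ and $zT_5$ one then simply inventories the possible shared irreducible factors: $w\neq z$, $w\nmid T_5$ (its reduction modulo $w$ retains the $y^2$-term, hence is nonzero), $z\nmid T_1$, and $\gcd(T_1,T_5)=1$ as just noted; so $wT_1$ and $zT_5$ are coprime, which finishes the third case.

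I expect the substance of the proof to be only the elementary monomial bookkeeping for $T_1$, $T_2$, $T_5$ in the two cases Ia and Ib---checking that the pertinent coefficients are $0$ or $1$ regardless of $a,b$---together with the standard facts about grade in Cohen--Macaulay rings and coprimality in unique factorization domains. The one place demanding a little care, and the closest thing to an obstacle, is making the irreducibility of $T_1$ and the non-proportionality assertions uniform across both setups and all parameter values; if a direct monomial comparison for $T_5$ should prove inconvenient, the identities \eqref{iden1} and \eqref{iden2} provide an alternative handle on the coprimality of $T_1$ with $T_5$.
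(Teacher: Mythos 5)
Your proof is correct, and it takes a genuinely different route from the paper's. You reduce all three claims to statements about height/coprimality: invoking that $Q$ is Cohen--Macaulay so that homogeneous sequences of the right height are automatically regular, then noting that $x$ lies in the radical of the relevant ideals, and finally reducing to coprimality of the remaining two generators in the UFD $\kk[y,z,w]$ (respectively, of $x^4$ and $zw^2T_2^2$ in $Q$). The paper instead runs a direct hands-on verification that each element is a nonzerodivisor on the quotient by its predecessors, arguing that a zero-divisor relation would force $T_1$ to divide something it does not. Both arguments rest on the irreducibility of $T_1$, but you prove it by viewing $T_1$ as a degree-one polynomial in $y$ with leading coefficient $z$ (so any linear factorization would force $z\mid T_1$, contradicting $T_1\bmod z=-w^2\ne 0$), whereas the paper views $T_1$ as a monic quadratic in $w$ over $\kk[x,y,z]$ and applies Eisenstein's criterion at the prime $z$. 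Your approach buys uniformity: a single mechanism (grade $=$ height in a Cohen--Macaulay ring, plus coprimality in a UFD) disposes of all three sequences, and the monomial comparisons ($T_1$ has coefficient $1$ on $yz$ and $0$ on $y^2$, while $T_2$ has $0$ on $yz$ and $T_5$ has $1$ on $y^2$) cleanly rule out proportionality for all parameter values $a,b$. The paper's version is somewhat terser at the step asserting that a zero-divisor for $T_2$ in $Q/(x^2,T_1)$ yields a relation $g_1T_2=g_2T_1$; your route avoids having to spell out that reduction.
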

\begin{proof}
To check that $\{x^2, T_1, T_2\}$ is a regular sequence, we can see that since $x$ does not occur in $T_1$, the form $T_1$ cannot be a zero divisor in $Q/(x^2)$, and again since $x$ does not occur in $T_2$, for $T_2$ to be a zero divisor in ${Q/(x^2,T_1)}$ there must be ${g_1,g_2\in Q}$ such that $g_1T_2=g_2T_1$. But looking at $T_1$ in the ring $\kk[x,y,z][w]$, we can write 
\[
T_1=f_2w^2+f_1w+f_0=
\begin{cases}
-w^2+(a^2+b)zw+(yz-az^2),&\text{if\ {Ia}}\\
-w^2-azw+(yz+a^2z^2),&\text{if\ {Ib}.}
\end{cases}
\]
and we see that in both cases the irreducible element $z\in\kk[x,y,z]$ divides both $f_1$ and $f_0$, does not divide $f_2$, and $z^2$ does not divide $f_0$. So by Eisenstein Criterion, $T_1$ is irreducible, and therefore $T_1$ divides $g_1$; there are many references for this criterion, but for a simple one see \cite[Exercice 18.11]{Eis94}.

To check that $\{x^4,zw^2T_2^{\,2}\}$ is a regular sequence all we need is to note that $x$ does not occur in $zw^2T_2^{\,2}$ and argue as above.

Finally, to check that $\{x^2, wT_1, zT_5\}$ is a regular sequence we observe again that $x$ does not occur in $wT_1$, so this is not a zero\h{divisor} in $Q/(x^2)$, and, as above, since $x$ does not occur in $zT_5$, for $zT_5$ to be a zero divisor in ${Q/(x^2,wT_1)}$ there must be ${g_1,g_2\in Q}$ such that $g_1zT_5=g_2wT_1$. But we have seen that $T_1$ is irreducible, so we must have $T_1$ dividing $g_1$.
\end{proof}

\begin{proposition}
\label{colon II}
Adopt  Setup  \ref{mu7-setup-c}. The  following equality holds:
\[
J\colon (q)= J+(x),
\]
where $q$ denotes the quadric generator of $I$ that is not in $J$.
\end{proposition}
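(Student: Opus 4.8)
The plan, following the pattern of Proposition \ref{J colon}, is to prove the two inclusions separately; only $J\colon(q)\subseteq J+(x)$ requires work. The inclusion $J+(x)\subseteq J\colon(q)$ is immediate: reading off the tables in Setup \ref{mu7-setup-c}, in each of the cases IIa--IId the quadric $q$ lies in $(z,w)$, so $xq\in(xz,xw)\subseteq J$, and of course $Jq\subseteq J$.

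For the reverse inclusion I would take a homogeneous $f$ with $qf\in J$ and write $f=xf_1+f_0$ with $f_0\in\kk[y,z,w]$; since $xq\in J$ we get $qf_0=qf-xqf_1\in J$, so it is enough to treat the case $f\in\kk[y,z,w]$. The next point is the identity $J\cap\kk[y,z,w]=(T_1,T_2)\kk[y,z,w]$: writing $J=(xy,xz,xw,T_1,T_2)Q$ over $Q=\kk[y,z,w][x]$ and applying the $\kk[y,z,w]$-algebra map $x\mapsto 0$ to a membership relation kills the first three generators and leaves an element of $(T_1,T_2)\kk[y,z,w]$. Hence $qf\in(T_1,T_2)R$, where $R$ denotes $\kk[y,z,w]$, and the whole statement reduces to showing that $q$ is a nonzerodivisor on $R/(T_1,T_2)R$: then $f\in(T_1,T_2)R\subseteq J$, as wanted.

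So the crux is that $\{T_1,T_2,q\}$ is a regular sequence in the Cohen--Macaulay ring $R=\kk[y,z,w]$, for which (as $\dim R=3$) it suffices to verify that $V(T_1,T_2,q)=\{0\}$ in $\mathbb{A}^3_{\kk}$; then $\sqrt{(T_1,T_2,q)}=(y,z,w)$ has height three, so $\{T_1,T_2,q\}$ is a homogeneous system of parameters, hence a regular sequence (see \cite[Theorem 2.1.2]{BH93}), and in particular $q$ is a nonzerodivisor modulo $(T_1,T_2)$. This vanishing is checked one case at a time from the explicit $T_1,T_2,q$ of Setup \ref{mu7-setup-c}: in case IId one has $(T_1,T_2,q)=(y^2,w^2,z^2)$ outright; in cases IIb and IIc the equation $T_2=0$ forces a coordinate to vanish and the two remaining quadrics then meet only at the origin; and in case IIa, $T_2=yw=0$ splits into $y=0$ or $w=0$, and in each branch $T_1=0$ together with $q=0$---using $a(a-1)(a^2-a+1)\neq 0$ and $b=(a^2-a+1)^{-1}$---forces $y=z=w=0$. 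I expect this last branch in IIa to be the only delicate point: it is a short computation in the parameter $a$ (for instance substituting $w=-a^{-1}(a+1)z$ into $q$ collapses it to $z^2$), but it must be carried out with care; everything else is formal.
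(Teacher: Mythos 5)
Your proof is correct, and it takes a genuinely different and cleaner route than the paper's. The paper proves $\Ker\varphi\subseteq\operatorname{im}\partial_3^*$ by ad hoc manipulations in each of the four cases IIa--IId: it writes down a candidate form for $f$ (using the explicit monomial shape of $J$), multiplies through by $q$, and shows coefficient by coefficient that $f$ must vanish modulo $J$. Your argument instead isolates the uniform content of the statement: after the reduction to $f\in R=\kk[y,z,w]$ and the identification $J\cap R=(T_1,T_2)R$ (both of which are clean and correct --- the identity $J\cap R=(T_1,T_2)R$ follows exactly as you say from $x\mapsto 0$, since $T_1,T_2$ do not involve $x$), everything comes down to the single assertion that $(T_1,T_2,q)$ is a homogeneous system of parameters in the $3$-dimensional Cohen--Macaulay ring $R$, hence a regular sequence. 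The only residual case analysis is the geometric check that these three quadrics cut out the origin in $\mathbb{A}^3$, which is considerably less computation than the paper's. I verified your case IIa branch computation: with $y=0$ and $w=-a^{-1}(a+1)z$ one indeed gets $q=z^2-b(a+1)^2z^2+b(a+1)^2z^2=z^2$, and the branches $z=0$ and $w=0$ are immediate using $a\neq 0$, $b\neq 0$. What your approach buys is conceptual clarity and a reusable template; what the paper's buys is that it stays entirely inside elementary ideal membership and never needs to invoke the Cohen--Macaulay-ness of $R$ or the dimension-theoretic characterization of regular sequences.
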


\begin{proof}  The inclusion $J+(x)\subseteq J\colon (q)$ is trivial.  Let $f$ be a homogeneous element in $Q$ such that  $qf\in J.$ We may assume that $f\in\kk[y,z,w]$ and in each one of the cases IIa-d we prove that $f\in J$. Recall that $[-]$ denotes the class of an element  modulo $J$.

\textbf{IIa.} We have 
$q=z^2 + ab(a + 1)zw + a^2bw^2$ 
and thus the inclusion 
\[
qf\in J\cap\kk[y,z,w]\subseteq(yw, y^2,zw,z^2)
\]
implies that $f$ does not have a pure power of $w$. Moreover, since $[yw]=0$ and $[y^2]=-[a^{-1}zw+a^{-2}(a+1)z^2]$, we may assume that $f=cyz^{n-1}+zh(z,w)$ for some $c\in\kk$ and $n\geq 0$.
Thus,
\begin{align*}
[qf]&=\bigl[cyz^{n+1}+zh(z,w)\bigl(z^2+ab(a+1)zw+{a^2}bw^2\bigr)\bigr]\\
&=z\bigl[cyz^{n}+h(z,w)\bigl(z^2+ab(a+1)zw+{a^2}bw^2\bigr)\bigr].
\end{align*}
Using that $J\cap \kk[y,z,w]=(yw,\, a^2y^2+azw+(a+1)z^2)\kk[y,z,w]$, we divide by $z$ and  get 
\begin{align*}
cyz^{n}+h(z,w)\big(z^2+ab(a+1)zw+{a^2}bw^2\big)\subseteq(yw, y^2,zw,z^2).
\end{align*}
The polynomial $h$ cannot have a pure power of $w$, so by induction on $n$, dividing by $z$ and using again that the simplified  expression on the left cannot have a pure power  of $w$,  we get $f=0$.

\textbf{IIb.}  We have $q=yw+z^2$ and thus the inclusion 
\[
qf\in J\cap\kk[y,z,w]\subseteq (y^2,yz, w^2)\kk[y,z,w]
\]
implies that $f$ does not have a pure power of $z$. Moreover, since $[y^2]=0$ and $[w^2]=[yz]$, we may assume that 
$f= byz^{n}+cyz^{n-1}w+dz^{n}w$ for some $b,c,d\in\kk$ and $n\geq 0$. 
Then 
\begin{align*}
qf&=(yw+z^2)(byz^{n}+cyz^{n-1}w+dz^{n}w)\\
&=byz^{n+2}+cyz^{n+1}w+dz^{n+2}w\\
 &\quad+y^2(bz^{n}w+cz^{n-1}w^2+dz^{n+1})-dyz^{n}(yz-w^2)
\end{align*}
and since $y^2,\, yz-w^2\in J$, the inclusion above also implies
\begin{equation*}
byz^{n+2}+cyz^{n+1}w+dz^{n+2}w\in J\cap\kk[y,z,w] \subseteq (y^2,yz,w^2)\kk[y,z,w],
\end{equation*}
hence $d=0$ and the inclusion becomes
\begin{equation*}
(byz+cyw)z^{n+1} \in J\cap\kk[y,z,w]= (y^2,yz-w^2)\kk[y,z,w].
\end{equation*}
By a descending induction argument on $n+1$ we get 
\[
byz+cyw\in (y^2,yz-w^2)\kk[y,z,w].
\]
We conclude that $b=c=0$, hence $f=0$.

\textbf{IIc.}  We have $q=z^2+w^2$ and thus the following inclusion 
\[
qf\in J\cap\kk[y,z,w]\subseteq (y^2,zw, w^2)\kk[y,z,w]
\]
imply that $f$ does not have a pure power of $z$. 
Using that $[zw]=0$ and $[y^2]=-[w^2]$, we may assume that $f=byz^n+cyw^n+dw^{n+1}$ for some $b,c,d\in\kk$ and $n\geq 0$.
\[
qf=(z^2+w^2)(byz^n+cyw^n+dw^{n+1})\in (y^2,zw, w^2)\kk[y,z,w] 
\]
implies that $byz^{n+2}\in (y^2,zw,w^2)$, hence $b=0$ and we get
\[
qf=(z^2+w^2)(cy+dw)w^{n}\in J\cap\kk[y,z,w]=( y^2+w^2,zw)\kk[y,z,w],
\]
which implies
\[
(cy+dw)w^{n+2}\in (y^2+w^2)\kk[y,z,w].
\]
Hence $c=d=0$, so $f=0$.

\textbf{IId.}  We have $q=z^2$ and thus the inclusion 
\[
qf=z^2f\in J\cap\kk[y,z,w]= (y^2,w^2)\kk[y,z,w]
\]
clearly implies that $f\in(y^2,w^2)\subseteq J$. 
\end{proof}

\begin{proposition} 
\label{colon JIII}
Adopt Setup \ref{mu9-setup}. 
\begin{enumerate}[$(a)$]
   \item If $J$ is as in case {IIIa}, then the following equality holds 
   \[J \colon (y)=J.\]
    \item If $J$ is as in case {IIIb}, then the following elements belong to $J$:
 \begin{align*}
     &z^3,\, z^2w,\, zw^2,\, axy^2-yzw,\,  x(w^3-xy^2),\,  z(w^3-xy^2),\\ 
     &(y+cw)(w^3-xy^2)+xy^3,\, yw^3+cw^4,\, y^2w^2-ay^2z^2-c^2w^4.
  \end{align*}
  Moreover, the following equalities hold   
  \[J\colon (z^2)=(x,z,w)\quad\text{and}\quad J\colon (w^3-xy^2)=J+(x,z).\]
     \item If $J$ is as in case {IIIc}, then the following elements belong to $J$:
\begin{align*}
   zw^2,\,  (-yz+w^2)w,\, w^4.
\end{align*}
Moreover, the following equality holds: \[J:(w)=J+(x).\]
\end{enumerate}
\end{proposition}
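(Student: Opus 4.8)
The plan is to treat the three parts in increasing order of difficulty. For part (a), I would observe that $J:(y)=J$ is equivalent to $y$ being a nonzerodivisor on $Q/J$; since $J$ is grade three perfect by Proposition \ref{J res mu9}(a), the ring $Q/J$ is one-dimensional Cohen--Macaulay, hence unmixed, so it is enough to check that $\dim Q/(J+(y))\le 0$. Reducing the generators of $J$ in case IIIa modulo $y$ turns them into $x^2,\,xz,\,xw,\,zw,\,w^2,\,z^2$ (up to signs and a harmless lower-order correction in one generator), whose radical is $(x,z,w)$; thus $\sqrt{J+(y)}=(x,y,z,w)$ is the maximal ideal, $\dim Q/(J+(y))=0$, and we are done.

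For parts (b) and (c) I would proceed in two stages. First, the membership claims: in each case the five quadratic generators provide normal-form rules in $Q/J$ — for IIIb, $z^2\equiv xy$, $zw\equiv az^2$, and $x\cdot x\equiv x\cdot z\equiv x\cdot w\equiv 0$; for IIIc, $z^2\equiv 0$, $zw\equiv xy$, $w^2\equiv axy+yz$, and $x\cdot x\equiv x\cdot z\equiv x\cdot w\equiv 0$ — while the sixth generator $R:=ayz-yw+bcz^2-cw^2$ (resp. $axy+yz-w^2$) lets one substitute for $yw$ (resp. $w^2$). One then verifies the listed elements in order, each reducing to the earlier ones: $z^3$ and $z^2w$ from $z^2\equiv xy$ together with $xz,xw\in J$; $zw^2$ from $az^2-zw\in J$ and $z^2w\in J$; $axy^2-yzw$ by multiplying $axy-zw\in J$ by $y$; $x(w^3-xy^2)$ and $z(w^3-xy^2)$ directly from $x^2,xz,xw\in J$ and the previous memberships; $yw^3+cw^4$ by multiplying $R$ by $w^2$ and discarding the $z$-divisible terms; $(y+cw)(w^3-xy^2)+xy^3$ by expanding and reducing to $yw^3+cw^4$ modulo $xw$; and finally, squaring the relation $yw\equiv ayz+bcz^2-cw^2$ and discarding every term divisible by $z^3$, $z^2w$, or $zw^2$ (all in $J$) leaves $y^2w^2\equiv a^2y^2z^2+c^2w^4\pmod J$, which is the last membership statement. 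Case IIIc is handled the same way, producing $zw^2$, $(w^2-yz)w=w^3-yzw$, and $w^4$ in $J$.

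Second, the colon ideals. The inclusions $(x,z,w)\subseteq J:(z^2)$, $J+(x,z)\subseteq J:(w^3-xy^2)$, and $J+(x)\subseteq J:(w)$ are immediate from stage one. For the reverse inclusions I would take a homogeneous $f$ with $z^2f\in J$ (resp. $(w^3-xy^2)f\in J$, resp. $wf\in J$) and first reduce $f$ modulo $(x,z,w)$ (resp. $(x,z)$, resp. $(x)$) — this is legitimate precisely because stage one shows that the relevant element times any member of the ideal being quotiented by already lies in $J$ — so that $f$ becomes a pure power $cy^n$ (resp. a polynomial $\tilde f(y,w)$, resp. an element of $\kk[y,z,w]$). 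What remains is to show that $c\,z^2y^n\in J$ forces $c=0$, and the analogous statements in the other two cases. This is the crux, and I expect it to be the main obstacle: it amounts to identifying exactly which monomials survive in $Q/J$. The cleanest route is to exhibit a monomial $\kk$-basis of $Q/J$, available because $Q/J$ is a one-dimensional Cohen--Macaulay ring whose Hilbert function is eventually constant equal to $4$, so that each $(Q/J)_n$ with $n\ge 2$ is four-dimensional with basis $\{y^n,\,y^{n-1}z,\,y^{n-2}z^2,\,w^n\}$ in case IIIb (and an analogous basis in case IIIc); from this one reads off, e.g., $z^2y^n\equiv xy^{n+1}=y^{n-1}z^2\ne 0$, which forces $c=0$, and the remaining reductions follow by the same bookkeeping. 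This parallels the polynomial-factorization arguments used in Propositions \ref{J colon} and \ref{colon II}.
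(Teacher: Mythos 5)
Your argument for (a) is correct and genuinely different from the paper's: you invoke that $Q/J$ is a one-dimensional Cohen--Macaulay ring, hence unmixed, and verify that $\sqrt{J+(y)}$ is the maximal ideal, so that $y$ is a parameter on $Q/J$ and therefore a nonzerodivisor. (Your reductions modulo $y$ are in fact exact, not approximate: $xy+yz-zw\mapsto -zw$ and $yw-w^2\mapsto -w^2$.) The paper instead reduces a candidate $f$ with $yf\in J$ to the form $axy^n+bzw^n+cw^{n+1}$ and makes the change of variables $y'=y-w$ to force $a=b=c=0$; your route is shorter and avoids the explicit normal-form bookkeeping.

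For the colon computations in (b) and (c), your first stage --- reducing $f$ modulo $(x,z,w)$, $(x,z)$, or $(x)$ by means of the stage-one memberships --- matches the paper's. The divergence, and the gap, is in the last step. The paper finishes with a further change of variables (in IIIb, $w'=-w+az$) or a direct expansion against the generators, followed by a case split on whether $c=0$, resp.\ $a=0$. You instead appeal to an explicit monomial $\kk$-basis $\{y^n,\,y^{n-1}z,\,y^{n-2}z^2,\,w^n\}$ of $(Q/J)_n$ in case IIIb. That basis claim is in fact true, but it is not ``available'' from the Hilbert function alone: knowing that $\dim_\kk(Q/J)_n=4$ for $n\ge 1$ tells you how many monomials survive, not which ones. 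To close the gap you must show these four classes span $(Q/J)_n$; the nontrivial point is the $w$-tower, where one needs the recursion $y\,w^b\equiv -c\,w^{b+1}$ for $b\ge 3$ together with the degree-two anchoring relation $y^{n-2}w^2\equiv a^2y^{n-2}z^2-c\,y^{n-3}w^3$ to collapse all $y^aw^b$ with $a\ge 1$, $b\ge 2$ onto $y^{n-2}z^2$ and $w^n$. Once spanning is established, independence follows from the dimension count and your conclusion ``$\alpha\,y^nz^2$ is a nonzero basis element, hence $\alpha=0$'' does finish $J:(z^2)=(x,z,w)$. However, the colon $J:(w^3-xy^2)=J+(x,z)$ and all of case IIIc need the same care --- in particular the analogous basis in IIIc is $\{y^n,\,y^{n-1}z,\,y^{n-1}w,\,y^{n-2}zw\}$, not the formal transcription of the IIIb one, and you would further need the kernel of multiplication by $w$ in that basis. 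So the strategy is viable, but the basis claim that you yourself flag as the crux is asserted, not proven, and verifying it is roughly as much work as the paper's change-of-variables argument.
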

\begin{proof} 
Recall that $[-]$ denotes the class of an element  modulo $J$.

(a): The inclusion $J\subseteq J\colon(y)$ is trivial. Let $f$ be a homogeneous element in $Q$ such that $yf\in J$. From the inclusion $J\subset(x,z,w)$ it follows that $f$ does not have as monomial any free power of $y$. 
Using that $[x^2]=[xz]=[xw]=[z^2]=0$,  $[yz]=[-xy+zw]$, and $[yw]=[w^2]$, we may reduce to the case where $f=axy^n+bzw^n+cw^{n+1}$ with $a,b,c\in\kk$ and $n\geq 0$. 
Moreover, $[axy^{n+1}+bzyw^n+cyw^{n+1}]=0\iff [axy^{n+1}+bzw^{n+1}+cw^{n+2}]=0$.
If we make the linear change of variables $y'=y-w$, then $J=\bigl(x^2,\, xz,\, xw,\, z^2,\, (x+z)y',\, wy'\bigr)$, and we have 
\[
ax(y')^{n+1}+bzw^{n+1}+cw^{n+2}\in J\subseteq (x^2,\, xz,\, xw,\, z^2,\, xy',\, zy',\, wy').
\] 
Thus, $b=c=0$ and $ax(y')^{n+1}\in J$ implies now $a=0$. We conclude that $f\in J$.

(b): Set $c=a^2-b$. The listed elements belong to $J$ based on the following computations:
\begin{gather*}
\begin{aligned}
z^3&=(xz)y-(xy-z^2)z &axy^2-yzw&=a(xy-z^2)y+(az^2-zw)y\\
z^2w&=(xw)y-(xy-z^2)w     
&  x(w^3-xy^2)&=(xw)w^2-x^2y^2\\  
zw^2&=az^2w-(az^2-zw)w
&
z(w^3-xy^2)&=(zw^2)w-(xz)y^2
\end{aligned}\\
\begin{aligned}
(y+cw)(w^3-xy^2)+xy^3&=yw^3+cw^4-cxy^2w\\
&=-(ayz-yw+bcz^2-cw^2)w^2-cyw(xy-z^2)\\
&\quad +(ay+bcz)(zw^2)-(cy)(z^2w)\\
yw^3+cw^4&=-w^2(ayz-yw+bcz^2-cw^2)+(ay+bcz)(zw^2)
\end{aligned}\\
\begin{aligned}
[yw]&=[ayz+bcz^2-cw^2]
&[y^2w^2]&=[a^2y^2z^2+c^2w^4]+[b^2c^2z+2abcy][z^3]\\
&&&\quad-[2acy+2bc^2z][zw^2]\\
&&&=[a^2y^2z^2+c^2w^4].
\end{aligned}
\end{gather*}

The inclusion $(x,z,w)\subseteq J\colon(z^2)$ follows as $xz$, $z^3$, and $z^2w$ are all in $J$. 
Let $f$ be a homogeneous polynomial in $Q$ such that $z^2f\in J$. We may assume $f=ey^n$ for some $e\in\kk$ and $n\geq 1,$ so $ez^2y^n\in J$.
Using that $xy-z^2\in J$, this is equivalent to $exy^{n+1}\in J$.
If we consider the change of variables given by $w'=-w+az$, we get a simple presentation of the ideal $J$:
\[
J=\bigl(x^2,\,  xz,\,  xw',\,  xy-z^2,\,  zw',\,  yw'-c^2z^2-c(w')^2\bigr).
\]
There exist homogeneous polynomials $f_1,\dots, f_6$ in $Q$ such that 
\[
    exy^{n+1}=f_1x^2+f_2xz+f_3xw'+f_4(xy-z^2)+f_5zw'+f_6\big(yw'-c^2z^2-c(w')^2\big)
\]
Hence 
\[
exy^{n+1}-f_4(xy-z^2)-f_6\big(yw'-c^2z^2-c(w')^2\big)
\in (x^2,\, xz,\,  xw',\,  zw').
\]
Necessarily $f_4$ contains the term $ey^{n}.$  If $c=0$, since $z^2y^n\not \in (x^2,\, xz,\, xw',\, zw')$, we obtain $e=0.$  If $c\not =0$, then $f_6$ contains the term $c^{-2}ey^n$. It follows that $c^{-2}ey^{n+1}w'\in (x^2,\, xz,\, xw',\, zw')$, which also implies that $e=0.$  The desired conclusion now holds.

Using  $J+(x,z)=(yw+cw^2,\, x,\, z)$ and $(x^2,\, xz,\, xw,\, zw^2)\subseteq J$ it is clear that the inclusion $J+(x,z)\subseteq J\colon (w^3-xy^2)$ holds. Let $f$ be a homogeneous polynomial in $\kk[y,w]$ such that 
\begin{equation}
\label{incl IIIb}
    (w^3-xy^2)f\in J.
\end{equation}
Using $J\subseteq (yw+cw^2,x,z)$, we get $w^3f\in(yw+cw^2)\kk[y,w]$, hence
$f=(y+cw)g$ for some $g\in\kk[y,w].$
There exists $h\in\kk[y,w],\, e\in\kk$ and $n\geq 1$ such that $g=wh+ey^n.$
Hence $f=(yw+cw^2)h+e(y+cw)y^{n}$, and since $yw+cw^2\in J+(x,z)$, we may assume that $f=e(y+cw)y^{n}$.
The inclusion \eqref{incl IIIb} now becomes:
\[\big((w^3-xy^2)(y+cw)+xy^3\big)ey^{n}-exy^{n+3}\in J.\] 
By computations above, we have $(w^3-xy^2)(y+cw)+xy^3\in J$, so  $exy^{n+3}\in J$.
If we write $exy^{n+3}=e(xy-z^2)y^{n+2}+ez^2y^{n+2}$ and use that $xy-z^2\in J$ we get $ez^2y^{n+2}\in J$. Using the equality $J\colon (z^2)=(x,z,w)$ we get  $ey^{n+2}\in(x,z,w)$, so $e=0.$ The desired conclusion now follows.

(c): The listed elements belong to $J$ based on the following computations:
\begin{align*}
    zw^2&=(xw)y-(xy-zw)w\\
(-yz+w^2)w&=a(xw)y-(axy+yz-w^2)w\\
w^4&=a(xw)(yw)+y(zw^2)-(axy+yz-w^2)w^2.
\end{align*}
The implication $J+(x)\subseteq J\colon (w)$ is clear.
Let $f$ be a homogeneous element in $\kk[y,z,w]$ such that $fw\in J$. Clearly, the degree of $f$ is at least $2$ and as $z^2$ and $w^2-azw-yz$ are in $J$ and $zw\in J+(x)$, we may further assume that 
$f=by^n+cy^{n-1}w+dy^{n-1}z$ for some $b,c,d\in\kk$ and $n\geq 2$. 
Using the inclusion  $J\subseteq(yz,\, z^2,\, zw,\, w^2) +(x)$, we deduce that $b=0$, hence 
\begin{align*}
    wf&=cy^{n-1}w^2+dzy^{n-1}w=cy^{n-1}(w^2-yz-azw)+y^{n-1}z\big(cy+(ca+d)w\big).
\end{align*}
Using again that $w^2-yz-azw\in J$, we get
$y^{n-1}z\big(cy+(ca+d)w\big)\in J$. By the inclusion $J\cap \kk[y,z,w]\subseteq (z^2,zw,yz-w^2)\kk[y,z,w]$ we get $c=0$ and thus $f=dzy^{n-1}.$ 
By assumption $fw=dzy^{n-1}w\in J$ and by definition of $J$ we have $xy-zw\in J$, therefore $dxy^{n}\in J$. 
Remark that the last generator of $J$ can be replaced by $azw+yz-w^2$.
There exist homogeneous polynomials $f_1,\dots ,f_6\in Q$ such that 
\[
    dxy^{n}=f_1x^2+f_2xz+f_3xw+f_4(xy-zw)+f_5z^2+f_6(azw+yz-w^2).
\]
Hence 
\[
dxy^n-f_4(xy-zw)-f_6(azw+yz-w^2)\in(x^2,\, xz,\, xw,\, z^2).
\]
Necessarily, $f_4$ contains the term $dy^{n-1}$. If $a=0$, since $y^{n-1}zw\not\in(x^2,\, xz,\, xw,\, z^2),$ we get $d=0$. If $a\not=0,$ then $f_6$ must contain the term $a^{-1}dy^{n-1}$. Using that $y^{n-1}w^2\not\in(x^2,\, xz,\, xw,\, z^2),$ we also get $d=0$. The desired conclusion now holds.
\end{proof}

\begin{proposition} 
\label{colon JIV}
Adopt Setup \ref{mu9-setup-c}.
\begin{enumerate}[$(a)$]
\item If  $J$ is as in case {IVa}, then it contains the  following elements:
\begin{align*}
&y^3-(2a-1)yz^2,\, w^4-3(2a-1)z^3w,\, zw^3+3(a-1)z^3w, \\
&y^4+(a+1)z^3w+3z^4,\,  (2a-1)y^4-zw^3+3(2a-1)z^4.
\end{align*}
Moreover, it satisfies the following equality 
\[
J\colon (z)=J+(x).
\]
\item If $J$ is as in case {IVb}, then it satisfies the following equality:
\[
J\colon (-yz^2-z^3+w^3)=J+(x).
\]
\item If $J$ is as in case  {IVc}, then it satisfies the following equality:
\[
J\colon (y) = J+(x).
\]
\end{enumerate}
\end{proposition}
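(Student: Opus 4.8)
The statement contains two kinds of assertions: the five membership claims in part (a), and the three colon equalities $J\colon(h)=J+(x)$, with $h=z$ in (a), $h=-yz^2-z^3+w^3$ in (b), and $h=y$ in (c). The plan is to dispatch the membership claims first, since they are exactly the identities invoked in the proof of Proposition \ref{J res mu9-c}(b). Working in $Q/J$ for $J$ as in case IVa, the two non-monomial generators supply the reductions $[y^2]=(a-1)[zw]+(2a-1)[z^2]$ and, since $a\neq1$, $[w^2]=a(2a-1)[zw]$; together with the scalar identities forced by $a^2-a+1=0$ — namely $(2a-1)^2=-3$, $a^2=a-1$, $a^3=-1$, $(a-1)^2=-a$, $(a-1)(2a-1)=-(a+1)$ and $(2a-1)(a+1)=3(a-1)$ — each listed polynomial collapses to $0$ modulo $J$ after a short computation. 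For instance $y^3-(2a-1)yz^2=y\bigl(y^2-(a-1)zw-(2a-1)z^2\bigr)+(a-1)z\,(yw)$, and substituting the two reductions into $[y^4]$ gives $[y^4]=-(a+1)[z^3w]-3[z^4]$, which is the fourth claim; the two remaining quartic identities fall out the same way.

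For the colon equalities, the inclusion $J+(x)\subseteq J\colon(h)$ is immediate, since $xh\in(xy,xz,xw)\subseteq J$ in every case. For the reverse inclusion, take a homogeneous $f$ with $hf\in J$ and write $f=f_0+xf_1$ with $f_0\in\kk[y,z,w]$; then $xhf_1\in J$, so $hf_0\in J\cap\kk[y,z,w]$, and since the generators $xy,xz,xw$ of $J$ vanish at $x=0$, this intersection is the $\kk[y,z,w]$-ideal generated by the remaining generators of $J$. I would then reduce $f_0$ to normal form modulo $J$: a Gröbner basis computation in each case shows that, for $n\geq1$ (and $n\geq2$ in case (c)), the degree-$n$ piece of $\kk[y,z,w]/(J\cap\kk[y,z,w])$ is spanned by exactly three standard monomials — of the form $z^n,\ w^n,\ yz^{n-1}$ in cases (a) and (b), and $y^n,\ y^{n-1}w,\ y^{n-2}w^2$ in case (c). Writing the normal form of $f_0$ as $\alpha m_1+\beta m_2+\gamma m_3$, multiplying by $h$ and reducing again places $hf_0$ inside the three-dimensional piece in degree $n+\deg h$, and the coordinate matrix in $(\alpha,\beta,\gamma)$ is triangular with nonzero diagonal, hence invertible; so $hf_0\in J$ forces $\alpha=\beta=\gamma=0$, i.e. $f_0\in J$ and $f\in J+(x)$. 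The finitely many low-degree exceptions are handled directly using that $J$ contains no linear form. This is the same mechanism as in Propositions \ref{colon JIII} and \ref{colon II}, but more direct here because $xy,xz,xw\in J$.

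The expected difficulty is purely bookkeeping. In part (a) it is propagating the identities coming from $a^2-a+1=0$ through the degree-four computations without error. In the colon statements it is identifying $J\cap\kk[y,z,w]$ and its standard-monomial basis correctly; in case (c) in particular one must notice that $w^3\in J\cap\kk[y,z,w]$ even though it is not among the listed generators — it equals $y(zw)-w(yz-w^2)$ — so that the relevant graded piece really is three-dimensional and the final linear-algebra step is valid.
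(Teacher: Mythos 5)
Your proposal is correct and takes a noticeably different route from the paper's argument for the colon equalities, though it converges with the paper on the easier assertions. For the membership claims in (a), your reduction scheme (replace $[y^2]$ and $[w^2]$ by multiples of $[zw]$ and $[z^2]$ and use the arithmetic of $a$) is a cleaner repackaging of what the paper does by writing each element explicitly as a $Q$-linear combination of the three non-$x$ generators $f_1,f_2,f_3$; the identities you quote all check out, in particular $[y^4]=-(a+1)[z^3w]-3[z^4]$.

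For the colon equalities the mechanisms genuinely diverge. Your approach: observe that $(y,z,w)x\subseteq J$, so the computation lives in the $3$-dimensional graded pieces of $\kk[y,z,w]/(J\cap\kk[y,z,w])$, write down a monomial basis, and show multiplication by $h$ is injective in each degree because the coordinate matrix is (after a permutation) triangular with nonzero diagonal. This is uniform across cases (a), (b), (c) and makes the Hilbert-function bookkeeping explicit, including the nontrivial observation that $w^3\in J$ in case IVc. The paper's proof of part (a), by contrast, is bare-hands: it expands $fz$ as a $\kk[y,w]$-combination of the three non-$x$ generators, sets $z=0$ to extract structural constraints ($g_2\in w\kk[y,w]$, $g_3\in y\kk[y,w]$), back-substitutes, and divides by $z$; parts (b) and (c) are closer in spirit to yours, reducing $f$ modulo $J$ to a short list of normal forms and deriving a contradiction, but stated more tersely. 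Your linear-algebra formulation makes the cases (b)/(c) reasoning fully rigorous in a way the paper leaves partly implicit, at the cost of importing a Gr\"obner-basis computation that the paper avoids.

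One small imprecision worth fixing: in case IVa the monomials $z^n,w^n,yz^{n-1}$ do span $\bigl(\kk[y,z,w]/(J\cap\kk[y,z,w])\bigr)_n$ and are a basis, but $w^n$ (for $n\ge2$) is not a \emph{standard} monomial for any reasonable term order (it is a nonzero scalar multiple of the standard monomial $z^{n-1}w$, since $[w^2]=a(2a-1)[zw]$). This does not affect the argument — the resulting matrix for multiplication by $z$ is still diagonal with nonzero entries — but the phrase ``standard monomials'' should be replaced by ``a monomial basis'' in that case, or the basis element $w^n$ replaced by $z^{n-1}w$. With that cosmetic correction the proof is complete.
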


\begin{proof}
(a): 
First, note that since ${a^2-a+1=0}$, we also have the  following equalities:
\[
-a^3=1,\,
(a-2)^2=-3(a-1),\, 
(a-2)^3=3(2a-1),\, 
(2a-1)^2=-3.
\]
Using the following notation for some of the generators of $J$:
\[
f_1=yw,\, f_2=y^2-(a-1)zw  -(2a-1)z^2,\,\text{and}\, f_3=-(2a-1)zw-(a-1)w^2,
\]
the listed elements belong to $J$ based on the following computations:
\begin{align*}
y^3-(2a-1)yz^2&=(a-1)zf_1+yf_2\\
w^4-3(2a-1)z^3w&=(a+1)ywf_1-(a+1)w^2f_2+(aw^2+3z^2)f_3\\
zw^3+3(a-1)z^3w&= -aywf_1+aw^2f_2-(a+1)z^2f_3\\
y^4+(a+1)z^3w+3z^4&=(a-1)yzf_1+\big((2a-1)z^2+y^2\big)f_2\\
(2a-1)y^4-zw^3+3(2a-1)z^4&=\big(ayw-(a+1)yz\big)f_1\\
&\quad -\big(3z^2-(2a-1)y^2+aw^2\big)f_2\\
&\quad +(a+1)z^2f_3.
\end{align*}
The first equality follows by multiplying $f_2$ by $y$ and then by using the expression for $f_1$.
The fourth equality follows from the first by multiplying by $y^2$ and then using that $y^2=f_1+(a-1)zw+(2a-1)z^2$. The fifth equality follows by multiplying the fourth equality by $2a-1$ and then using the third equality.

Since $xz\in J$, the inclusion $J+(x)\subseteq J\colon (z)$ holds trivially. Let $f$ be a homogeneous polynomial in $\kk[y,z,w]$ such that $fz\in J.$
The inclusion
\[J\cap\kk[y,z,w]\subseteq \big(yw, y^2-(a-1)zw-(2a-1)z^2, (2a-1)zw+(a-1)w^2\big)\] implies that there exist homogeneous polynomials $g_1,g_2,g_3\in\kk[y,z,w]$ such that 
\[fz=ywg_1 + \big(y^2-(a-1)zw-(2a-1)z^2\big)g_2 +\big((2a-1)zw+(a-1)w^2\big)g_3.\]
Without loss of generality, we may assume that $g_1,g_2,g_3\in\kk[y,w]$. Therefore,  by taking $z=0$, we get the following equality in $\kk[y,w]$:
\[0=ywg_1 + y^2g_2 +(a-1)w^2 g_3.\]
It follows that $g_2=wh_2$ and $g_3=yh_3$ for some $h_2,h_3\in\kk[y,w].$
Moreover, we get 
\begin{align*}
fz&=\big(-(a-1)zw-(2a-1)z^2\big)wh_2 +(2a-1)zwyh_3, \ \text{so}\\
  f&=-\big((2a-1)zw+(a-1)w^2\big)h_2 +(2a-1)ywh_3\in J.
\end{align*}
The desired conclusion now follows.

(b): Since $xy,\, xz,\, xw \in J$, the inclusion $J+(x)\subseteq J\colon (-yz^2-z^3+w^3)$ clearly holds. 
Let $f$ be a homogeneous polynomial in  $\kk[y,z,w]$ such that $f(-yz^2-z^3+w^3)\in J$.
It is clear that $f$ does not contain pure powers of $z$ or $w$. Modulo $J$ we may assume $f=\alpha yz^n$  for some $\alpha\in\kk$ and $n\geq 0$. If $\alpha\not=0$, then $yz^{n+3}\in J$, which is a contradiction. Hence $f=0$, and the desired conclusion follows.

(c): Since $xy\in J$, the inclusion $J+(x)\subseteq J\colon (y)$ clearly holds.  Let $f$ be a homogeneous polynomial in  $\kk[y,z,w]$ such that $fy\in J.$
The inclusion $J\subseteq(xy, xz, xw, zw, z^2, yz, w^2)$ shows that $f$ does not contain a pure power of $y$, a pure power of $w$, or terms of the form $y^nw$  with $n\geq 0$. Modulo the ideal $J$ we may further reduce to the case when $f =\alpha z$ for some $\alpha \in \kk$.
If $\alpha\not=0$, then $yz\in (zw, z^2, yz-w^2)$, which is a contradiction. Hence $f=0$, and the desired conclusion follows. 
\end{proof}

\section*{Acknowledgements}
We thank Lars W. Christensen, Tony Iarrobino, Ela Celikbas, and Jai Laxmi for helpful conversations on the topic of this paper. We thank Giulio Caviglia for sharing his Master Thesis \cite{Cav00} with us; it was inspiration for our project. We also thank Andrew Kustin for sharing the preprint \cite{EKK17} with us and for helpful conversations.

P. Macias Marques was partially supported by CIMA -- Centro de Investiga\c{c}\~{a}o em Matem\'{a}tica e Aplica\c{c}\~{o}es, Universidade de \'{E}vora, project {\scriptsize UIDB/04674/2020} (Funda\c{c}\~{a}o para a Ci\^{e}ncia e Tecnologia).  
Part of this work was done while P.M.M. was visiting Northeastern University, he wishes to thank Tony Iarrobino, Oana Veliche, and the Mathematics Department for their hospitality.

J. Weyman was supported by the grants:
{\scriptsize MAESTRO  NCN - UMO-2019/34/A/ST1/00263}
Research in Commutative Algebra and Representation Theory and 
{\scriptsize NAWA POWROTY-PPN/PPO/2018/1/00013/U/00001}
Applications of Lie algebras to Commutative Algebra.


\begin{thebibliography}{ACLY19}

\bibitem[AAM12]{AAM12}
H.~Ananthnarayan, Luchezar~L. Avramov, and W.~Frank Moore, \emph{Connected sums
  of {G}orenstein local rings}, Journal f\"{u}r die Reine und Angewandte
  Mathematik \textbf{667} (2012), 149--176.

\bibitem[ACLY19]{ACLY19}
Hariharan Ananthnarayan, Ela Celikbas, Jai Laxmi, and Zheng Yang,
  \emph{Decomposing {G}orenstein rings as connected sums}, Journal of Algebra
  \textbf{527} (2019), 241--263.

\bibitem[AG71]{AG71}
Luchezar~L. Avramov and Evgeni\u{\i}~Solomonovich Golod, \emph{The homology of
  algebra of the {K}oszul complex of a local {G}orenstein ring}, Akademiya Nauk
  SSSR. Matematicheskie Zametki \textbf{9} (1971), 53--58, English version in
  Math. Notes 9 (1971), 30--32.

\bibitem[AS22]{AS22}
Nancy Abdallah and Hal Schenck, \emph{Free resolutions and {L}efschetz
  properties of some {A}rtin {G}orenstein rings of codimension four},
  arXiv:2208.01536, 2022.

\bibitem[Avr89]{Avr89}
Luchezar~L. Avramov, \emph{Homological asymptotics of modules over local
  rings}, Commutative algebra ({B}erkeley, {CA}, 1987), Math. Sci. Res. Inst.
  Publ., vol.~15, Springer, New York, 1989, pp.~33--62.

\bibitem[BE77]{BE77a}
David~A. Buchsbaum and David Eisenbud, \emph{Algebra structures for finite free
  resolutions, and some structure theorems for ideals of codimension 3},
  American Journal of Mathematics \textbf{99} (1977), no.~3, 447--485.

\bibitem[BH93]{BH93}
Winfried Bruns and J\"urgen Herzog, \emph{Cohen-{M}acaulay rings}, Cambridge
  studies in advanced mathematics, vol.~39, Cambridge University Press, 1993.

\bibitem[Bro87]{Bro87}
Anne~E. Brown, \emph{A structure theorem for a class of grade three perfect
  ideals}, Journal of Algebra \textbf{105} (1987), no.~2, 308--327.

\bibitem[Cav00]{Cav00}
Giulio Caviglia, \emph{Su alcune algebre di {G}orenstein che sono di {K}oszul},
  Master's thesis, Universit\`a Degli Studi Di Genova, 2000.

\bibitem[CN11]{CN11}
Gianfranco Casnati and Roberto Notari, \emph{On the irreducibility and the
  singularities of the {G}orenstein locus of the punctual {H}ilbert scheme of
  degree 10}, Journal of Pure and Applied Algebra \textbf{215} (2011), no.~6,
  1243--1254.

\bibitem[CRV01]{CRV01}
Aldo Conca, Maria~Evelina Rossi, and Giuseppe Valla, \emph{Gr\"obner flags and
  {G}orenstein algebras}, Compositio Mathematica \textbf{129} (2001), no.~1,
  95--121.

\bibitem[Eis94]{Eis94}
David Eisenbud, \emph{Commutative algebra with a view toward algebraic
  geometry}, Graduate Texts in Mathematics, vol. 150, Springer-Verlag, 1994.

\bibitem[EKK17]{EKK17}
Sabine El~Khoury and Andrew~Richard Kustin, \emph{Use a {M}acaulay inverse
  system to detect an embedded deformation}, private communication, 2017.

\bibitem[EN62]{EN62}
John~Alonzo Eagon and Douglas~Geoffrey Northcott, \emph{Ideals defined by
  matrices and a certain complex associated with them}, Proceedings of the
  Royal Society. London. Series A \textbf{269} (1962), 188--204.

\bibitem[GS]{GS}
Daniel~Richard Grayson and Michael~Eugene Stillman, \emph{Macaulay 2, a
  software system for research in algebraic geometry}, Available at
  \href{http://www.math.uiuc.edu/Macaulay2/}
  {http://www.math.uiuc.edu/Macaulay2/}.

\bibitem[Har92]{Har92}
Joe Harris, \emph{Algebraic geometry}, Graduate Texts in Mathematics, vol. 133,
  Springer-Verlag, New York, 1992, A first course.

\bibitem[IK99]{IK99}
Anthony Iarrobino and Vassil Kanev, \emph{Power sums, {G}orenstein algebras,
  and determinantal loci}, Lecture Notes in Mathematics, vol. 1721,
  Springer-Verlag, 1999, Appendix C by Iarrobino and Steven L. Kleiman.

\bibitem[KM82]{KM82b}
Andrew~Richard Kustin and Matthew Miller, \emph{Algebra structures on minimal
  resolutions of {G}orenstein rings}, Commutative algebra ({F}airfax, {V}a.,
  1979), Lecture Notes in Pure and Appl. Math., vol.~68, Dekker, New York,
  1982, pp.~45--65.

\bibitem[KM85]{KM85}
\bysame, \emph{Classification of the {T}or-algebras of codimension four
  {G}orenstein local rings}, Mathematische Zeitschrift \textbf{190} (1985),
  no.~3, 341--355.

\bibitem[Lax20]{Lax20}
Jai Laxmi, \emph{Generic doublings of almost complete intersections of
  codimension $3$}, arXiv:2006.11690v1, 2020.

\bibitem[Mac94]{Mac94}
Francis~Sowerby Macaulay, \emph{The algebraic theory of modular systems},
  Cambridge Mathematical Library, Cambridge University Press, Cambridge, 1994,
  Revised reprint of the 1916 original, With an introduction by Paul Roberts.

\end{thebibliography}

\providecommand{\bysame}{\leavevmode\hbox to3em{\hrulefill}\thinspace}
\providecommand{\MR}{\relax\ifhmode\unskip\space\fi MR }
\providecommand{\MRhref}[2]{%
  \href{http://www.ams.org/mathscinet-getitem?mr=#1}{#2}
}
\providecommand{\href}[2]{#2}

\end{document}